\documentclass[11pt,a4paper,final]{amsart}
\usepackage[utf8]{inputenc}
\usepackage[T1]{fontenc}
\usepackage[UKenglish]{babel}
\usepackage[a4paper,margin=1in]{geometry}

\usepackage{amsmath,amsthm,amsfonts,amssymb}
\usepackage{mathrsfs,dsfont}
\usepackage{graphicx}
\usepackage{url}
\usepackage{verbatim}
\usepackage{xcolor}

\usepackage[autostyle]{csquotes} 

\makeatletter
\def\namedlabel#1#2{\begingroup
    #2%
    \def\@currentlabel{#2}%
    \label{#1}\endgroup
}
\makeatother

\theoremstyle{plain}
\newtheorem{theorem}{Theorem}[section]
\newtheorem{corollary}[theorem]{Corollary}
\newtheorem{lemma}[theorem]{Lemma}
\newtheorem{proposition}[theorem]{Proposition}

\theoremstyle{definition}
\newtheorem{remark}[theorem]{Remark}
\newtheorem{example}[theorem]{Example}
\newtheorem{definition}[theorem]{Definition}

\newtheorem*{notation}{Notation}
\newtheorem*{assumptions}{Assumptions}

\numberwithin{equation}{section}

\renewcommand\labelenumi{\textup{\alph{enumi})}}

\renewcommand\theenumi\labelenumi
\makeatletter\renewcommand{\p@enumii}{}\makeatother 

\renewcommand{\leq}{\leqslant}

\renewcommand{\geq}{\geqslant}

\renewcommand{\Re}{\mathrm{Re}}


\newcommand{\loc}{\mathrm{loc}}

\DeclareMathOperator{\supp}{supp}

\DeclareMathOperator{\ess}{ess\,sup}
\DeclareMathOperator{\essi}{\mathop{ess\,inf}}

\DeclareMathOperator{\Spec}{Spec}


\newcommand{\cB}{\mathcal{B}}

\newcommand{\cM}{\mathcal{M}}

\newcommand{\real}{\mathds{R}}
\newcommand{\R}{\mathds{R}}
\newcommand{\Pp}{\mathds{P}}
\newcommand{\Ee}{\mathds{E}}
\newcommand{\I}{\mathds{1}}

\newcommand{\N}{\mathds{N}}
\newcommand{\pr}{\mathds{P}}

\newcommand{\ex}{\mathds{E}}

\newcommand{\nat}{\mathds{N}}
\newcommand{\re}{\mathop{\mathrm{Re}}}

\newcommand{\cut}{\mathcal{N}}

\begin{document}
\title[Quasi-ergodicity of compact semigroups]{Quasi-ergodicity of \linebreak compact strong Feller semigroups on $L^2$}

\author[K.~Kaleta]{Kamil Kaleta}
\address[K.~Kaleta]{Wroc\l aw University of Science and Technology, Faculty of Pure and Applied Mathematics,
Wyb. Wyspia\'nskiego 27, 50-370 Wroc\l aw, Poland. E-Mail: \textnormal{kamil.kaleta@pwr.edu.pl}}

\author[R.L.~Schilling]{Ren\'e L.\ Schilling}
\address[R.L.~Schilling]{TU Dresden, Fakult\"at Mathematik, Institut f\"{u}r Mathematische Stochastik, 01062 Dresden, Germany. E-Mail: \textnormal{rene.schilling@tu-dresden.de}}
\subjclass[2020]{Primary 47D06; Secondary 37A30; 47A35; 47D08; 60G53; 60J35.}

\keywords{heat content; ground state; progressive intrinsic ultracontractivity; quasi-stationary measure; Schr\"odinger operator; compact semigroup; Feller semigroup; Feynman-Kac semigroup; Markov process; Markov chain; L\'evy process; direct step property.}

\thanks{
K.~Kaleta was supported by the Alexander von Humboldt Foundation (Germany) and by the National Science Centre (Poland) project OPUS-18 2019/35/B/ST1/02421.
R.L.~Schilling was supported through the DFG-NCN Beethoven Classic 3 project SCHI419/11-1 \& NCN 2018/31/G/ST1/02252. We are grateful for stimulating discussions with David Berger (Dresden), Krzysztof Bogdan (Wroc{\l}aw), Tomasz Klimsiak (Toru\'n) and Pawe{\l} Sztonyk (Wroc{\l}aw) on the topics of this paper.
}

\begin{abstract}
    We study the quasi-ergodicity of compact strong Feller semigroups $U_t$, $t > 0$, on $L^2(M,\mu)$;  we assume that $M$ is a locally compact Polish space equipped with a locally finite Borel measue $\mu$. The operators $U_t$ are ultracontractive and positivity preserving, but not necessarily self-adjoint or normal. We are mainly interested in those cases where the measure $\mu$ is infinite and the semigroup is not intrinsically ultracontractive. We relate quasi-ergodicity on $L^p(M,\mu)$ and uniqueness of the quasi-stationary measure with the finiteness of the heat content of the semigroup (for large values of $t$) and with the progressive uniform ground state domination property. The latter property is equivalent to a variant of quasi-ergodicity which progressively propagates in space as $t \uparrow \infty$; the propagation rate is determined by the decay of $U_t \I_M(x)$. We discuss several applications and illustrate our results with examples. This includes a complete description of quasi-ergodicity for a large class of semigroups corresponding to non-local Schr\"odinger operators with confining potentials.
\end{abstract}

\maketitle

\section{Introduction}

Let $(M,d,\mu)$ be a metric measure space. The central theme of our contribution is the question whether a semigroup of compact operators $\left\{U_t: t \geq 0 \right\}$ acting on the scale of Lebesgue spaces $L^p(\mu) = L^p(M,\mu)$ admits some \enquote{ergodic} measure. More precisely, we look at expressions of the from
\begin{gather}\label{cond}
    \lim_{t\to\infty} \left| \frac{\sigma(U_t f)}{\sigma(U_t\I_M)} - m(f) \right| = 0
    \quad\text{where}\quad
    m(f) := \int f\,dm,\quad
    \sigma(g) := \int g\,d\sigma,
\end{gather}
and we establish conditions on $\left\{U_t: t \geq 0 \right\}$ such that the limit in \eqref{cond} i) exists,  ii) exists uniformly for certain families of measures $\sigma$ and functions $f$, iii) converges with a certain rate (in time and space), and iv) that $m$ is the unique quasi-stationary measure.  In many situations the rate can be explicitly given, and it is even exponential in time $t$. The measure $m$, which is sometimes called a \emph{quasi-ergodic}, has a very concrete form
\begin{gather*}
    m(E) = \frac{\int_E \psi_0\,d\mu}{\int_M \psi_0\,d\mu}
\end{gather*}
where $\psi_0$ is the eigenfunction of the $L^2(\mu)$-adjoint semigroup $\left\{U_t^*: t \geq 0 \right\}$ for the eigenvalue $e^{-\lambda_0 t}$. In the context of Schr\"{o}dinger operators and their semigroups, $\lambda_0$ is the ground state eigenvalue and $\psi_0$ is the corresponding eigenfunction (for the adjoint). If $\psi_0 \in L^1(\mu)$,  the measure $m$ is a finite measure, and it is a quasi-stationary (or quasi-invariant) measure of the semigroup $\left\{U_t: t \geq 0 \right\}$.

Since we do not assume that $U_t\I_M \equiv \I_M$ -- i.e., we do not assume conservativeness -- we can expect only some kind of quasi-ergodic behaviour. This is reflected in the structure of \eqref{cond} where we normalize $\sigma(U_t f)$ by the total mass $\sigma(U_t \I_M)$; if $U_t$ is the transition semigroup of some non-conservative stochastic process with initial distribution $\sigma$, this can be seen as conditioning on survival, i.e.\ $\sigma(U_t f)/\sigma(U_t\I_M) = \int \Ee^x\left[f(X_t) \mid \zeta > t)\right]\,\sigma(dx)$, $\zeta = \inf\{s > 0 : X_s\notin M\}$ being the life-time.

Large time evolution phenomena occur in various areas of mathematics and science, e.g.\ in PDEs, dynamical systems, stochastic processes, statistical and quantum physics, and they are often modeled by semigroups of operators. These applications require quite different state spaces, ranging from  discrete spaces (lattices, graphs, \dots) to manifolds, fractals and, of course, classical Euclidean spaces. In order to cover these situations, we work in a rather general setting, on a locally compact Polish space $N$ and $\mu$ is a locally finite Borel measure on the Borel sets $\mathcal{B}(M)$, and we consider sufficiently regular (but not necessarily self-adjoint or normal) operator semigroups acting (initially) on $L^2(M,\mu)$  This includes the most interesting case when $M$ is unbounded and/or $\mu$ is an infinite measure. We consider semigroups of compact integral operators which are positivity improving and ultracontractive, and have the strong Feller property -- see \eqref{A0}--\eqref{A3} and \eqref{A4} in Section \ref{sec:cpt} for a precise statement. This framework accomodates many situations which are typically discussed in the literature.

We focus on Schr\"odinger semigroups -- with confining potentials, based both on local (second order differential operators) \cite{bib:DS,bib:D,bib:S,bib:S3,bib:Si,bib:MPR,bib:MS} and non-local kinetic term operators (L\'evy operators, fractional and relativistic Laplacians, etc.) \cite{bib:CMS,bib:JW,bib:KS,bib:KwM,bib:CKW,bib:KL15a,bib:KSchi20} -- and on Feynman--Kac semigroups of Markov processes whose state spaces are general locally compact Polish spaces; this includes processes on fractals \cite{bib:Fuk,bib:Kus} and Markov chains on graphs and discrete spaces \cite{bib:Bar,bib:CKS,bib:MS-C1,bib:MS-C2}. Some specific cases and various examples are discussed in Section \ref{sec:examples}. There are many other examples that fit into our framework, but which we do not discuss here in detail, e.g.\ evolution semigroups of non-homogeneous second order differential operators (generating diffusion processes) and non-local L\'evy-type operators (generating jump L\'evy-type processes) with confining coefficients \cite{bib:BGL,bib:O,bib:OR,bib:W,bib:SW} or Markov processes killed upon exiting an unbounded domain \cite{bib:BB,bib:Kw,bib:CKW,bib:CKW2}.

We are aware of only a few papers that work in a similar direction, Takeda \cite{bib:T2}, Knobloch \& Partzsch \cite{bib:KP} and Zhang, Li \& Song \cite{bib:ZSS}. Takeda uses a Dirichlet form approach, his processes and semigroups are symmetric and his main objective is the existence and uniqueness of the quasi-invariant measure. The key technical assumption is a tightness condition on the resolvent (kernel) of the semigroup; this is, essentially, a compactness condition which is weaker than IUC (intrinsically ultracontractivity). The contributions by Knobloch \& Partzsch and Zhang \emph{et al.}\ are, in some sense, extremes: Knobloch \& Partzsch strive for a high level of generality in the basic setup, starting from a general Markov kernel assuming what they call \enquote{compact domination}. In the end they prove that intrinsic ultracontractivity (IUC) implies uniform quasi-ergodicity at an exponential rate. One should mention, that the only known examples satisfying the compact dominaton property are IUC kernels and semigroups. On the other hand, Zhang, Li \& Song consider Markov processes, they are topologically quite general, but assume that the underlying measure is finite.

 From the point of view of applications, both IUC and the finiteness of the underlying measure are rather restrictive conditions; for Schr\"odinger semigroups with confining potentials IUC requires very fast growth of the potential at infinity, see \cite{bib:DS,bib:B,bib:KL15a,bib:ChW2016}. For non-self-adjoint operators IUC was first studied by Kim \& Song \cite{bib:KSo3,bib:KSo4}.

Recently, we introduced in \cite{bib:KSchi20} the notion of \emph{progressive intrinsic ultracontractivity} in order to obtain sharp two-sided heat kernel estimates for non-IUC or not necessarily IUC semigroups, see also Definition~\ref{def:pGSD} and Remark~\ref{rem:opposite} further down. As it turns out, it is this notion that helps us to avoid IUC as well as the finiteness of the underlying base measure. Consequently, we are able to characterize the quasi-ergodic regularity for a fairly general class of compact semigroups and underlying state spaces, including  necessary and sufficient conditions for the existence and uniqueness of a quasi-stationary measure.

\medskip
Let us briefly summarize the main contributions of this paper. Recall that the heat content is defined as $Z(t) = \|U_t\I_M\|_{L^1(\mu)} = \|U_t^*\I_M\|_{L^1(\mu)}$.

\medskip
\paragraph{\bfseries Finite heat content and exponential quasi-ergodicity}
We begin with a fairly general, yet easily verified, sufficient condition on the semigroup $\left\{U_t: t\geq 0\right\}$ such that the convergence in \eqref{cond} is exponential in time, uniform for $f\in L^p$, $\|f\|_p\leq 1$, for any fixed $p \in [1,\infty]$, and it holds for any finite measure $\sigma$. In fact (cf.\ Theorem \ref{th:hcf_and_eqe}) if the heat content of the semigroup is finite for some $t_1>0$, then $\psi_0$ is integrable, the measure $m$ is the unique quasi-stationary measure of the semigroup and the first term appearing in \eqref{cond} is dominated by $c_1 e^{-c_2 t} \sigma(U_{t_1}\I_M)/\sigma(\phi_0) \left\|f\right\|_p$ -- here $t\gg 1$ is large, the constants $c_1, c_2>0$ do not depend on $\sigma$ or $f \in L^p$, and $\phi_0$ is the ground state of $\left\{U_t: t \geq 0 \right\}$. A similar reasoning applies to the adjoint semigroup $\left\{U_t^*: t \geq 0 \right\}$. Taking $\sigma(dy)= \delta_{x}(dy)$ as the Dirac-delta measure concentrated in $x \in M$, we obtain pointwise convergence with an explicit space-rate which is uniform on compact sets, see Corollary \ref{cor:hcf_uqe}. We will see in Section \ref{sec:F-K-general} that for a very general class of Feynman--Kac semigroups with potential $V(x)$ the heat content $Z(t_1)$ is finite if the function $\exp(-t_1 V(x))$ is integrable \enquote{at infinity}; this condition is often also a necessary condition. This means that the finiteness of the heat content is independent of the underlying Markov process. Since our proof relies only on the Feynman--Kac formula, see Demuth and van Casteren \cite{bib:DC}, it also applies to evolution semigroups of very general Schr\"odinger operators $H=-L+V$ -- in this case the finiteness of the heat content is determined solely by the behaviour of the potential $V$ at infinity, and it is independent of the kinetic term, i.e.\ the free Feller operator $L$.

\medskip
By definition, the heat content is finite if $U_t\I_M$ is integrable. The latter is guaranteed by the asymptotic ground state domination (aGSD) of the semigroup (that is $U_t\I_M \leq c e^{-t\lambda_0}\phi_0$ for some $t>0$) and the integrability of the ground state $\phi_0$. In \cite{bib:KSchi20} we defined the notion of progressive ground state domination (pGSD), which requires only that $\I_{K_t}U_t\I_M \leq c e^{-t\lambda_0}\phi_0$ for some increasing family of compact sets $K_t\uparrow M$. In \cite{bib:KSchi20} pGSD was obtained as a necessary condition of sharp two-sided heat kernel estimates for Schr\"{o}dinger semigroups, we will now see that pGSD is closely related to quasi-ergodicity.

\medskip
\paragraph{\bfseries Equivalence of aGSD/pGSD and quasi-ergodicity}
If pGSD holds for some family of compact sets $K_t\uparrow M$, one can show that \eqref{cond} is dominated by some rate $\kappa(t)$ which is essentially given by decay of $U_t\I_{M}(x)$ and $U_t^*\I_{M}(x)$ for $x\notin K_{bt}$ (for some fixed $b\in (0,1/2)$). In fact, the bound $\kappa(t)$ is uniform for all $f\in L^p(\mu)$, $\|f\|_p\leq 1$, and all measures $\sigma$ with support in $K_{at}$ (where $a\in (0,1)$ such that $a+2b=1$), see Theorem~\ref{th:pGSD-to-puqe}. Specializing again to $\sigma = \delta_x$ with $x\in K_{at}$, we see in Theorem~\ref{th:pGSD-pIUC} that pGSD is equivalent to the convergence of \eqref{cond} to zero (locally uniformly for $x\in K_{t}$) as $t\to\infty$. The latter property is best described as \enquote{progressive uniform quasi-ergodicity}. As a consequence, see Theorem~\ref{th:pGSD-gives-uniq}, we also see that the quasi-ergodic measure is necessarily given by $m$, resp., $m^*$, hence it is unique.

In Section~\ref{sec:pGSD_and_hc} we see that the finiteness of the heat content always implies pGSD, but without good control on the growth of the family of compact sets $K_t\uparrow M$. In this connection it is interesting to compare Theorem~\ref{th:hcf_and_eqe} and Theorems~\ref{th:pGSD-to-puqe} \& \ref{th:pGSD-pIUC}. Theorem~\ref{th:hcf_and_eqe} gives an easy-to-check sufficient condition (to wit:\ finite heat content) for the exponential (in time) quasi-ergodicity, but we may not be able to control the space behaviour. Theorem~\ref{th:pGSD-to-puqe} \& \ref{th:pGSD-pIUC} contain a necessary and sufficient condition (to wit:\ pGSD) for a somewhat stronger  property, which is progressively uniform in space, and with a time-rate depending on the semigroup. The examples in Section~\ref{subsec-schroedinger} illustrate that the finiteness of the heat content requires stronger assumptions on the generator -- for instance, for Schr\"odinger or Feynman--Kac semigroups one needs that the potential $V$ grows at infinity at least logarithmically.

\medskip
Assuming that the heat content is finite, our methods also yield results on the asymptotics of compact semigroups and the heat content as $t\to\infty$, cf.\ Section~\ref{sec-large-time}. For example, one can see that $Z(t)$ is exponentially close to $e^{-\lambda_0 t}\|\phi_0\|_1\|\psi_0\|_1/\mu(\psi_0\phi_0)$ as $t\to\infty$, see Corollary~\ref{cor:hc_asym}.

\medskip
The final section (Section~\ref{sec:examples}) contains a number of examples which illustrate the applicability and the limitations of our results from the previous sections. The examples in~\ref{sec:F-K-general} are related to Theorem \ref{th:hcf_and_eqe} and Corollary \ref{cor:hcf_uqe} and the examples in the other two Sections~\ref{subsec:HO}, \ref{subsec-schroedinger} mainly illustrate the pGSD-property from Theorems~\ref{th:pGSD-to-puqe} \& \ref{th:pGSD-pIUC}. In fact, Section \ref{subsec-schroedinger} contains a complete characterization, giving necessary and sufficient conditions, of quasi-ergodic regularity of the Schr\"odinger semigroups corresponding to a large class of non-local Schr\"odinger operators with confining potentials; this can be seen as a direct continuation of our recent work \cite{bib:KSchi20}.

\begin{notation}
Most of our notation is standard or self-explanatory. We use \enquote{positive} in the non-strict sense, e.g.\ a function
$f \in L^2(M,\mu)$ is called positive, if $f \geq 0$ $\mu$-a.e., and we say that $f$ is \enquote{strictly positive}, if $f > 0$ $\mu$-a.e. We will frequently use the shorthand $m(f)$ to denote $\int f\,dm$. We use $a\wedge b$ and $a\vee b$ to indicate $\min(a,b)$ and $\max(a,b)$, and we write $f\asymp g$ if $c^{-1} f(x)\leq g(x)\leq c g(x)$ for all $x$ (in a given range) and with some fixed constant $c\in (0,\infty)$. Finally, we call a strongly continuous contraction semigroup $\left\{U_t: t\geq 0\right\}$ a \emph{Feller semigroup} if it maps the continuous functions vanishing at infinity, $C_\infty(M)$ into itself, and a \emph{strong Feller semigroup} if it maps the bounded Borel measurable functions $B_b(M)$ (or, sometimes, the set $L^\infty(M)$) into the bounded continuous functions $C_b(M)$.
\end{notation}

\section{Setup and basic assumptions} \label{sec:cpt}

\paragraph{\bfseries Topological preliminaries.}
For the convenience of our readers we summarize a few topological preliminaries with references. We will work in a \emph{Polish space} $M$, i.e.\ a completely metrizable topological space such that the metric defines the topology, and the topology has a countable base (second countable). Being a metric space, $M$ is also first countable (each point has a countable neighbourhood basis) and it is separable, i.e.\ it contains a countable dense subset, cf.\ \cite[Corollary 1.3.8, Theorem 4.1.15]{bib:E}. We call $M$ \emph{locally compact} if each $x\in M$ has a relatively compact open neighbourhood $U(x)$. If a Polish space is locally compact, it is $\sigma$-compact (or countable at infinity), i.e.\ there is an increasing sequence of compact sets $K_n$ such that $\bigcup_{n\in\nat} K_n = M$, cf.\ \cite[Theorem 4.1.15.iii)]{bib:E}. In a metric space compact sets are closed and $\sigma$-compactness allows us to write any closed set as a countable union of compact sets. Therefore, the Borel $\sigma$-algebra $\mathcal B(M)$ of a locally compact Polish space is generated by the compact sets. A Borel measure $\mu$ is a positive measure defined on the Borel sets $\mathcal B(M)$. It is locally finite, if each point $x\in M$ has an open neighbourhood with finite measure. This implies that $\mu$ is finite on compact sets. Thus, a locally finite measure on a locally compact Polish space is $\sigma$-finite.

\medskip

\paragraph{\bfseries Setting and assumptions.}
Throughout we assume that  $M$ is a locally compact Polish space  and $\mu$ a positive, locally finite Borel measure with full topological support. We do not assume that the space $M$ is compact or that the measure $\mu$ is finite. On $L^2(M,\mu)$ we consider a strongly continuous semigroup of bounded linear operators, $\left\{U_t: t \geq 0 \right\}$ and its adjoint semigroup $\left\{U_t^*: t \geq 0 \right\}$, which is given by
\begin{gather*}
    \int U_t f(x) \cdot \overline{g(x)}\,\mu(dx) = \int f(x)\cdot \overline{U_t^* g(x)}\,\mu(dx), \quad f,g \in L^2(M,\mu),\; t>0.
\end{gather*}
Moreover, we assume that $U_t$ and $U_t^*$ are integral operators, which are given by a real-valued measurable kernel $u_t(x,y)$:
\begin{gather} \label{eq:kernel}
    U_t f(x) = \int u_t(x,y) f(y)\,\mu(dy), \qquad U_t^* g(y) = \int u_t(x,y) g(x) \,\mu(dx), \quad t>0.
\end{gather}
Moreover, we use the following assumptions:
\begin{assumptions}
\begin{align}
&\tag{\textbf{\upshape A0}}\label{A0}
\parbox[t]{.8\linewidth}{The linear operators $U_t : L^2(M,\mu)\to L^2(M,\mu)$, $t>0$, are compact;}
\\
&\tag{\textbf{\upshape A1}}\label{A1}
\parbox[t]{.8\linewidth}{For some $t_0 > 0$ the operator $U_{t_0}$ is positivity improving: for every positive $f \in L^2(M,\mu)$, $f\not\equiv 0$, we have $U_{t_0} f(x) > 0$ for $\mu$-almost all $x\in M$.}
\\
&\tag{\textbf{\upshape A2}}\label{A2}
\parbox[t]{.8\linewidth}{
The semigroups $\left\{U_t: t \geq 0 \right\}$ and $\left\{U_t^*: t \geq 0 \right\}$ have the strong Feller property: for every $t>0$ and $f \in L^{\infty}(M,\mu)$, we have $U_t f, U_{t}^*f \in C_b(M)$;}
\\
&\tag{\textbf{\upshape A3}}\label{A3}
\parbox[t]{.8\linewidth}{For some $t_0>0$ the operators $U_{t_0}, U_{t_0}^*: L^2(M,\mu) \to L^{\infty}(M,\mu)$ are bounded, i.e.\ ultracontractive:\\[10pt]
\mbox{}\hfill$\displaystyle
   \int u^2_{t_0}(\cdot,y) \,\mu(dy) \in L^{\infty}(M,\mu) \quad \text{and} \quad \int u^2_{t_0}(x,\cdot) \,\mu(dx) \in L^{\infty}(M,\mu).
$\hfill\mbox{}
}
\end{align}
\end{assumptions}
Our results in Sections~\ref{sec:pGSD_and_hc} and~\ref{sec:pGSD} need, when considering non-compact spaces $M$, a further \emph{strong decay} condition.
\begin{align}
\mbox{}\tag{\textbf{\upshape A4}}\label{A4}
&\parbox[t]{.8\linewidth}{There is some $t_0>0$ such that for every $t \geq t_0$ and $\epsilon >0$ there exists a compact set $K \subset M$ satisfying \\[5pt]
\mbox{}\hfill$\displaystyle
    U_t\I_M(x) \leq \epsilon \quad \text{or} \quad  U^*_t\I_M(x) \leq \epsilon, \quad\text{for every \; $x \in M \setminus K$.}
$\hfill\mbox{}}
\end{align}

Let us comment on the assumptions \eqref{A0}--\eqref{A4} and how they are related.
\begin{remark} \label{rem:discuss_ass}
\eqref{A0} is equivalent to assuming that the adjoint operators $U_t^*$ are compact on $(L^2(M,\mu))^* = L^2(M,\mu)$.

\eqref{A1} is equivalent to assuming that $U_{t_0}^*$ is positivity improving for some $t_0>0$. Moreover, $U_{t_0}$ and $U_{t_0}^*$ are positivity improving if, and only if, the integral kernel $u_{t_0}(x,y)$ is for $\mu\otimes\mu$ almost all $(x,y)$ strictly positive.

\eqref{A2} is equivalent to the condition that $U_t \I_M(x)$ and $U_t^* \I_M(x)$ are bounded continuous functions of $x$. These functions will play a central role in our investigations.

\eqref{A3} implies, because of the reflexivity of the space $L^2(M,\mu)$, that (the restrictions of the second duals to $L^1(M,\mu) \subset \big(L^{\infty}(M,\mu)\big)^*$ of) the operators $U_{t_0}$ and $U^*_{t_0}$ are bounded operators from $L^1(M,\mu)$ to $L^2(M,\mu)$. Due to the semigroup property, the operators $U_{2t_0}$ and $U^*_{2t_0}$ are bounded operators from $L^1(M,\mu)$ to $L^{\infty}(M,\mu)$; this is equivalent to saying
\begin{align} \label{eq:ultra}
    u_{2t_0}(\cdot,\cdot)  \in L^{\infty}(M \times M, \mu \otimes \mu).
\end{align}
Using the semigroup property, these boundedness properties extend to all $t \geq t_0$, resp.\ $t \geq 2 t_0$.
Conversely, if there is some $t_0>0$, for which \eqref{eq:ultra} holds, then the condition \eqref{A3} is also satisfied.

\medskip
    Let $t>2t_0$.
    From \eqref{A2} we know that $U_t$ and $U_t^*$ are continuous operators from $L^\infty(M,\mu)\to L^\infty(M,\mu)$, and \eqref{A3} means that $U_t$ and $U_t^*$ are continuous operators from $L^2(M,\mu)\to L^\infty(M,\mu)$; by duality, we see that $U_t$ and $U_t^*$ map $L^1(M,\mu)$ continuously into $L^2(M,\mu)$, and using again \eqref{A3} and the semigroup property, we conclude that $U_t$ and $U_t^*$ are continuous from $L^1(M,\mu)$ into $L^\infty(M,\mu)$. Therefore, a standard interpolation argument shows that $U_t$ and $U_t^*$ are continuous $L^p(M,\mu)\to L^\infty(M,\mu)$ for any $p\in (1,\infty)$ and, in view of \eqref{A2}, we also have
    \begin{align} \label{eq:Lp_cont}
        U_t \big(L^p(M,\mu)\big), U_{t}\big(L^p(M,\mu)\big) \subseteq C_b(M), \quad t>2t_0, \; p \in [1,\infty].
    \end{align}
\end{remark}

Let $A$, and $A^*$, denote the generators of semigroups $\left\{U_t: t \in T \right\}$, and $\left\{U_t^*: t \geq 0 \right\}$, respectively. A version of the Jentzsch theorem, see \cite[Theorem V.6.6]{bib:Sch}, shows that $\lambda_0:= \inf \re \big( \Spec (-A)\big) = \inf \re \big(\Spec (-A^*)\big)$ is a joint isolated and simple eigenvalue of $-A$ and $-A^*$. Moreover, the corresponding normalized eigenfunctions $\phi_0, \psi_0 \in L^2(M,\mu)$ can be assumed to be strictly positive. In particular,
\begin{align} \label{eq:eeqs}
    U_t \phi_0
    = e^{-\lambda_0 t} \phi_0
    \quad\text{and}\quad
    U_t^* \psi_0
    = e^{-\lambda_0 t} \psi_0, \quad t >0.
\end{align}
The number $\lambda_0$ is called the \emph{ground state eigenvalue}, and the functions $\phi_0, \psi_0$, are the \emph{ground states} of $-A$, and $-A^*$, respectively.

By \eqref{eq:Lp_cont} we have $\phi_0, \psi_0 \in C_b(M)$ and both eigenequations in \eqref{eq:eeqs} are also true in a pointwise sense. We will frequently use the following constant
\begin{gather*}
    \Lambda := \int \phi_0(x) \psi_0(x)\,\mu(dx)
\end{gather*}
and set
\begin{gather}\label{eq:cut}
    \cut := \left\{x \in M: \phi_0(x) = 0 \right\} \cup \left\{x \in M: \psi_0(x) = 0 \right\}.
\end{gather}
As mentioned above, $\mu(\cut)=0$. Due to the continuity of $\phi_0$ and $\psi_0$ the set $\cut$ is a closed set. If for some $t>0$ the kernel $u_t(x,y)$ is defined pointwise (e.g.\ if the map $(x,y) \mapsto u_t(x,y)$ is continuous) and strictly positive for all $x,y \in M$, then $\cut = \emptyset$. We are mainly interested in examples which come from the theory of stochastic processes, and for these examples $\cut = \emptyset$  is always satisfied. Since the original question comes from the $L^2$-theory, we do not want to assume \emph{a priori} $\cut = \emptyset$.

The following result is a straightforward adaptation of a Lemma by Zhang, Li \& Song \cite[Lemma 2.1]{bib:ZSS}; it is a variant of the estimate which was originally obtained for intrinsic semigroups by Kim \& Song \cite[Theorem 2.7]{bib:KSo2}. The very first result in this direction is due to Pinsky \cite[Theorem 3]{bib:Pin} for diffusion semigroups on bounded domains in $\R^d$. The result of Zhang \emph{et al.}\ is valid for rather general nonsymmetric semigroups. These authors use sub-Markovian semigroups of standard Markov processes with strictly positive kernels of Hilbert-Schmidt type satisfying \eqref{A3}. A close inspection of their proof reveals that only the compactness of the operators $U_t$ and $U^*_t$ is needed and that the result is valid in the more general setting, which is described by our present conditions \eqref{A0}--\eqref{A3}. Note that Zhang \emph{et al.}\ use $t_0=1$ and that their proof requires $t/2 > 1 = t_0$.

\begin{lemma}[{\cite[Lemma 2.1]{bib:ZSS}}]\label{lem:unif_conv}
    If \eqref{A0}--\eqref{A3} hold for some $t_0>0$,
    then there exist $C, \gamma > 0$ such that
    \begin{align}\label{eq:unif_conv}
        \left|e^{\lambda_0 t} u_t(x,y) - (1/\Lambda)\phi_0(x) \psi_0(y) \right|
        \leq C e^{-\gamma t}, \quad t > 2t_0, \quad\text{for $\mu\otimes\mu$ a.a.\ $x,y\in M$.}
    \end{align}
\end{lemma}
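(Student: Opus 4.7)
The plan is to recognize $\Lambda^{-1}\phi_0(x)\psi_0(y)$ as the integral kernel of the Riesz (spectral) projection $P$ associated with the isolated simple eigenvalue $\lambda_0$ of $-A$, so that the quantity $e^{\lambda_0 t}u_t(x,y) - \Lambda^{-1}\phi_0(x)\psi_0(y)$ becomes the integral kernel of the operator $V_t - P$, where $V_t := e^{\lambda_0 t}U_t$. The objective is then to show that $V_t - P$ is exponentially small as an operator from $L^1(M,\mu)$ into $L^\infty(M,\mu)$; by the standard identification of the $L^1 \to L^\infty$ operator norm with the essential supremum of the (integral) kernel, this yields \eqref{eq:unif_conv}.

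First, since $\lambda_0$ is algebraically simple by the cited Jentzsch-type theorem, $P$ is the rank-one operator $Pf = \Lambda^{-1}\int f\psi_0\,d\mu\cdot \phi_0$, whose integral kernel is precisely $\Lambda^{-1}\phi_0(x)\psi_0(y)$. The eigenrelations \eqref{eq:eeqs} give $V_t P = P V_t = P$ for every $t \geq 0$, and so $V_t - P = V_t(I-P)$. The second step is an $L^2$ spectral gap: since the operators $U_t$ are compact by \eqref{A0}, the spectrum of $-A$ restricted to the invariant subspace $\mathrm{Range}(I-P)$ lies in the half-plane $\{\re z \geq \lambda_0 + \gamma_0\}$ for some $\gamma_0 > 0$, and therefore there exist constants $C_1, \gamma > 0$ such that
$$\|V_t - P\|_{L^2(\mu) \to L^2(\mu)} \leq C_1 e^{-\gamma t}, \qquad t \geq 1.$$
The third step upgrades this to an $L^1 \to L^\infty$ bound. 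For $t > 2t_0$, I factor $V_t - P = V_{t_0}(V_{t-2t_0} - P)V_{t_0}$, using that $V_{t_0}PV_{t_0} = P$. By \eqref{A3} the operator $V_{t_0}$ is bounded $L^2(\mu) \to L^\infty(\mu)$, and by duality applied to the analogous bound for $V_{t_0}^*$ it is also bounded $L^1(\mu) \to L^2(\mu)$. Composing,
$$\|V_t - P\|_{L^1(\mu) \to L^\infty(\mu)} \leq C_2\, e^{-\gamma(t - 2t_0)},$$
which upon absorbing $e^{2\gamma t_0}$ into the constant gives the desired pointwise (a.e.) kernel estimate.

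The step I expect to be most delicate is the rigorous identification of $P$ as a rank-one projection of the explicit form $\phi_0 \otimes \psi_0 / \Lambda$: this requires the \emph{algebraic} (not merely geometric) simplicity of $\lambda_0$ as a pole of the resolvent, together with the pairing between the primal eigenfunction of $-A$ and the eigenfunction $\psi_0$ of the adjoint $-A^*$. Once these spectral-theoretic facts are in place, the remaining arguments are a routine combination of the spectral gap in $L^2$ with the ultracontractive smoothing provided by \eqref{A3}, and no further properties of the kernel $u_t$ are needed.
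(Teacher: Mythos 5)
Your argument is correct, and it is essentially the argument the paper relies on: the paper gives no proof of Lemma \ref{lem:unif_conv}, quoting it from \cite[Lemma 2.1]{bib:ZSS} with the remark that only \eqref{A0}--\eqref{A3} (compactness, positivity, strong Feller, ultracontractivity) are needed, and your route --- identifying $\Lambda^{-1}\phi_0\otimes\psi_0$ as the rank-one Riesz projection furnished by the Jentzsch theorem (whose algebraic simplicity of $\lambda_0$ the paper already asserts via \cite[Theorem V.6.6]{bib:Sch}), extracting an $L^2$ spectral gap for $V_t(I-P)$ from compactness, and upgrading to an $L^1\to L^\infty$, hence a.e.\ kernel, bound by sandwiching with the ultracontractive factors $V_{t_0}$ from \eqref{A3} and its dual --- is exactly the standard proof behind that citation. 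Two routine housekeeping points: state the $L^2$ bound for all $t>0$ (enlarging the constant on $t\leq 1$ by local boundedness of the semigroup) so that the middle factor $V_{t-2t_0}-P$ is controlled when $t-2t_0$ is small, and note that identifying $\left\|V_t-P\right\|_{L^1\to L^\infty}$ with the $\mu\otimes\mu$-essential supremum of its kernel uses the $\sigma$-finiteness of $\mu$.
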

This estimate leads to a rate of convergence (as $t\to\infty$) for compact semigroups, which can be seen as a variant of the classical result of Chavel \& Karp~\cite{bib:ChK}, see also \cite{bib:S2,bib:KLVW}.

\medskip
\begin{center}\bfseries
    Unless otherwise stated, we take the same \boldmath $t_0>0$ \unboldmath in \eqref{A1}, \eqref{A3} and \eqref{A4}.
\end{center}
\bigskip

\noindent
We will now briefly discuss the relation between \eqref{A4} and \eqref{A0}.
\begin{lemma} \label{lem:a4_gives_a0}
    Assume \eqref{A2} and \eqref{A3}, \eqref{A4} for some $t_0>0$. Then the operators $U_t, U_t^*: L^2(M,\mu) \to L^2(M,\mu)$ are compact for all $t \geq t_0$. In particular, if \eqref{A3}, \eqref{A4} are true for every $t_0>0$, then \eqref{A0} holds.
\end{lemma}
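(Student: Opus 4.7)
The plan is to approximate $U_t$ in operator norm by Hilbert--Schmidt operators obtained by localising via multiplication by indicator functions $\I_K$ of compact sets; the decay condition~\eqref{A4} will control the tail. Throughout, let $M_K$ denote pointwise multiplication by $\I_K$ and note that by the Remark after the assumptions, \eqref{A3} extends to give $\|U_t\|_{L^2\to L^\infty}, \|U_t^*\|_{L^2\to L^\infty}\leq C_t<\infty$ for all $t\geq t_0$.

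First I would check that for each compact $K\subset M$ and each $t\geq t_0$, the operators $M_K U_t$ and $U_t M_K$ are Hilbert--Schmidt on $L^2(M,\mu)$, hence compact. Indeed, since $\int u_t^2(x,\cdot)\,d\mu\leq C_t^2$ for $\mu$-a.a.\ $x$ and $\mu(K)<\infty$ by local finiteness, one has
\begin{gather*}
    \|M_K U_t\|_{\mathrm{HS}}^2 = \int_K\!\!\int u_t(x,y)^2\,\mu(dy)\,\mu(dx)\leq C_t^2\mu(K)<\infty,
\end{gather*}
and symmetrically for $U_t M_K=(M_K U_t^*)^*$, using \eqref{A3} for the adjoint.

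Next I would derive the key tail estimate via a pointwise Cauchy--Schwarz with respect to the (positive) kernel $u_t(x,\cdot)\,d\mu$: for every $f\in L^2(M,\mu)$,
\begin{gather*}
    |U_t f(x)|^2 \leq U_t\I_M(x)\cdot U_t(|f|^2)(x).
\end{gather*}
Integrating over $M\setminus K$ and using the adjoint relation $\int U_t(|f|^2)\,d\mu=\int|f|^2\,U_t^*\I_M\,d\mu$ gives
\begin{gather*}
    \|M_{M\setminus K}U_t f\|_2^2
    \leq \Bigl(\sup_{x\notin K}U_t\I_M(x)\Bigr)\cdot\|U_t^*\I_M\|_\infty\cdot\|f\|_2^2,
\end{gather*}
and $\|U_t^*\I_M\|_\infty<\infty$ by \eqref{A2}. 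The same argument applied to $U_t^*$, together with $\|U_tM_{M\setminus K}\|_{L^2\to L^2}=\|M_{M\setminus K}U_t^*\|_{L^2\to L^2}$, yields the symmetric bound
\begin{gather*}
    \|U_t M_{M\setminus K}\|_{L^2\to L^2}^2 \leq \Bigl(\sup_{x\notin K}U_t^*\I_M(x)\Bigr)\cdot\|U_t\I_M\|_\infty.
\end{gather*}

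Now I would invoke \eqref{A4}: for each $t\geq t_0$ and each $n\in\N$ choose a compact $K_n$ with either $U_t\I_M\leq 1/n$ on $M\setminus K_n$, in which case $\|U_t-M_{K_n}U_t\|_{L^2\to L^2}\to 0$, or $U_t^*\I_M\leq 1/n$ on $M\setminus K_n$, in which case $\|U_t-U_tM_{K_n}\|_{L^2\to L^2}\to 0$. Passing to a subsequence along which the same alternative occurs, $U_t$ is the operator-norm limit of compact operators (either $M_{K_n}U_t$ or $U_tM_{K_n}$), and therefore compact. Compactness of $U_t^*$ follows by taking adjoints. The second assertion is then immediate: if \eqref{A3} and \eqref{A4} hold for every $t_0>0$, the above applies at every $t>0$, which is precisely \eqref{A0}.

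The only mildly delicate point is the \enquote{either/or} in \eqref{A4}; I handle it by the subsequence argument above. Everything else is a routine combination of Hilbert--Schmidt bounds from \eqref{A3}, the pointwise Cauchy--Schwarz trick, and the boundedness of $U_t\I_M, U_t^*\I_M$ provided by \eqref{A2}.
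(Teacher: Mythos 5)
Your proposal is correct and takes essentially the same route as the paper's proof: truncate to a compact set to obtain a Hilbert--Schmidt (hence compact) operator via \eqref{A3}, and control the tail $\I_{M\setminus K}U_t$ in operator norm by the pointwise Cauchy--Schwarz bound $|U_tf|^2\leq U_t\I_M\cdot U_t(|f|^2)$, Tonelli, \eqref{A2} and \eqref{A4}. Your explicit handling of the \enquote{either/or} in \eqref{A4} -- using the right truncation $U_tM_K$ together with Schauder's theorem -- is if anything more careful than the paper, which truncates only on the left and dismisses the adjoint case as \enquote{similar}.
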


\begin{proof}
We only consider the operators $U_t$. The proof for $U^*_t$ is similar. For a compact set $K \subset M$ we define:
\begin{gather*}
    U_t^K f(x) := \I_K(x) U_t f(x) = \int_{M} \I_K(x)u_t(x,y) f(y) \, \mu(dy), \quad f \in L^2(M,\mu), \ t>0,
\end{gather*}
i.e.\ $U_t^K$ is an integral operator with the kernel $u^K_t(x,y) = \I_K(x)u_t(x,y)$. Observe that by \eqref{A3},
\begin{align*}
    \int_M\int_M u^K_t(x,y)^2 \,\mu(dy)\,\mu(dx)
    & = \int_K\int_M u_t(x,y)^2 \,\mu(dy)\,\mu(dx) \\
    & \leq \mu(K) \cdot \ess_{x \in M} \int_M u_t(x,y)^2 \,\mu(dy)
    < \infty, \quad t \geq t_0,
\end{align*}
which means that $U^K_t$, $t \geq t_0$, are Hilbert--Schmidt operators. Moreover, for $f \in L^2(M,\mu)$,
by the Cauchy--Schwarz inequality and the Tonelli theorem,
\begin{align*}
    \int_M \left|U_t f(x) - U^K_t f(x) \right|^2 \,\mu(dx)
    & = \int_{M \setminus K} \left|U_t f(x)\right|^2 \,\mu(dx) \\
    & \leq \int_{M \setminus K} U_t \I_M(x) \int_M u_t(x,y) |f(y)|^2 \,\mu(dy) \,\mu(dx) \\
	& \leq \sup_{x \in {M \setminus K}} U_t \I_M(x) \left\|U^*_t \I_{M}\right\|_{\infty} \left\|f\right\|_2^2.
\end{align*}
The assumptions \eqref{A2} and \eqref{A4} yield that for every $t > t_0$ and $\epsilon >0$ we can find a compact set $K$ such that
\begin{gather*}
    \left\|U_t f - U^K_t f\right\|_2 \leq \epsilon \left\|f\right\|_2.
\end{gather*}
Letting $\epsilon\to 0$, we see that every operator $U_t$, $t \geq t_0$, is a strong limit of compact operators, hence compact. This completes the proof.
\end{proof}

If the operators $U_t$, $t>0$, are self-adjoint, then compactness for some $t >0$ implies compactness for every $t>0$ -- this is due to spectral theorem. In that case, \eqref{A4} implies \eqref{A0}, even if \eqref{A3}, \eqref{A4} are true for some $t_0>0$ only. For non-self-adjoint operators this is not clear. Therefore, in Sections~\ref{sec:pGSD_and_hc} and~\ref{sec:pGSD}, we have to assume both \eqref{A0} and \eqref{A4}.

In unbounded spaces, it is often true that \eqref{A0} and \eqref{A4} (holding for all $t_0>0$) are equivalent, see e.g.\ \cite[Lemma 1]{bib:Kw} and \cite[Lemma 9]{bib:KaKu}. Some general sufficient conditions for compactness in various settings, including Markov, Schr\"odinger and Feynman--Kac semigroups, and more general perturbations can be found in \cite{bib:WW,bib:T1,bib:LSW}.

\section{Heat content and exponential quasi-ergodicity of compact semigroups} \label{sec:content}

If $\phi_0, \psi_0 \in L^1(M,\mu)$,\footnote{In general, $\phi_0, \psi_0 \in L^1(M,\mu)$ need not be satisfied since $\mu$ can be an infinite measure.} we can define  probability measures on $(M,\cB(M))$ by
\begin{align} \label{eq:quasi_stat_meas}
    m(E) = \frac{\int_E \psi_0(x)\,\mu(dx)}{\int_M \psi_0(x)\,\mu(dx)}
    \quad\text{and}\quad
    m^*(E) = \frac{\int_E \phi_0(x)\,\mu(dx)}{\int_M \phi_0(x)\,\mu(dx)}.
\end{align}

It is rather easy to check that the measures $m$ and $m^*$ are \emph{quasi-stationary} (\emph{probability}) \emph{measures} of the semigroups $\left\{U_t: t \geq 0 \right\}$ and $\left\{U^*_t: t \geq 0 \right\}$.

\begin{definition} [\textit{Quasi-stationary measure on $L^{\infty}$}] \label{def:qsm}
    Let \eqref{A2} hold. A probability measure $\bar m$ on $(M,\cB(M))$ is said to be a \emph{quasi-stationary measure} of the semigroup $\left\{U_t: t \geq 0 \right\}$ (on $L^{\infty}(M,\mu)$), if for all $t>0$ and $f \in L^{\infty}(M,\mu)$ we have $\bar m(U_t\I_M) >0$ and
	\begin{gather}\label{eq:quasi-stat-def}
    \frac{\bar m(U_tf)}{\bar m(U_t\I_M)} = \bar m(f).
\end{gather}
\end{definition}

Because of \eqref{eq:Lp_cont} -- this follows from \eqref{A2}, \eqref{A3} -- we can extend the definition of quasi-stationarity to $L^p(M,\mu)$ for any $p \in [1,\infty)$, at least for large $t$.

\begin{lemma}\label{lem:qsm_Lp}
    Assume \eqref{A2}--\eqref{A3} for $t_0>0$ and let $p \in [1,\infty)$. Let $\bar m$ be a probability measure on $(M,\cB(M))$ and let $t > 2t_0$ be such that $\bar m(U_t\I_M) >0$. Then
\begin{gather*}
    \forall_{g \in L^p(M,\mu)} \quad \frac{\bar m(U_tg)}{\bar m(U_t\I_M)} = \bar m(g)
    \quad\iff\quad
    \forall_{f \in L^{\infty}(M,\mu)} \quad  \frac{\bar m(U_tf)}{\bar m(U_t\I_M)} = \bar m(f).
\end{gather*}
\end{lemma}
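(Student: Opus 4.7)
My plan is to prove Lemma~\ref{lem:qsm_Lp} by a symmetric approximation argument. The crucial input is \eqref{eq:Lp_cont}: for $t>2t_0$, $U_t$ maps $L^p(M,\mu)$ continuously into $C_b(M)$ for every $p \in [1,\infty]$, so $\bar m(U_t h)$ is well-defined whenever $h\in L^p(M,\mu)$ or $h\in L^\infty(M,\mu)$. Using the $\sigma$-compactness of $M$ from the topological preliminaries, I would fix an exhausting sequence $K_n\uparrow M$ of compact sets, noting that local finiteness of $\mu$ gives $\mu(K_n)<\infty$.

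For the direction \enquote{$L^p \Rightarrow L^\infty$}, given $f \in L^\infty(M,\mu)$ I would set $f_n := \I_{K_n}f \in L^p(M,\mu)\cap L^\infty(M,\mu)$ and apply the $L^p$-hypothesis to get
\begin{gather*}
    \bar m(U_t f_n) = \bar m(U_t\I_M)\,\bar m(f_n).
\end{gather*}
Letting $n\to\infty$, the uniform bound $|U_tf_n(x)|\leq\|f\|_\infty\|U_t\I_M\|_\infty$ from \eqref{A2} and pointwise convergence $U_tf_n\to U_tf$ yield $\bar m(U_tf_n)\to\bar m(U_tf)$ by dominated convergence against the finite measure $\bar m$; similarly $|f_n|\leq\|f\|_\infty$ and $f_n\to f$ pointwise give $\bar m(f_n)\to\bar m(f)$.

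For the converse \enquote{$L^\infty\Rightarrow L^p$}, given $g\in L^p(M,\mu)$ I would set $g_n := g\cdot\I_{\{|g|\leq n\}\cap K_n}\in L^\infty(M,\mu)$, apply the $L^\infty$-hypothesis to obtain $\bar m(U_tg_n)=\bar m(U_t\I_M)\,\bar m(g_n)$, and pass to the limit. The bound $|g_n|\leq|g|\in L^p$ and pointwise convergence $g_n\to g$ give $g_n\to g$ in $L^p$ by dominated convergence, which via \eqref{eq:Lp_cont} upgrades to $\|U_tg_n-U_tg\|_\infty\to 0$ and hence $\bar m(U_tg_n)\to\bar m(U_tg)$; the convergence $\bar m(g_n)\to\bar m(g)$ then follows by dominated convergence on $(M,\bar m)$ with dominating function $|g|$.

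The main obstacle is conceptual rather than computational: for the statement to be unambiguous, $\bar m(h)$ must not depend on the representative chosen from the $L^p$- or $L^\infty$-equivalence class, which forces $\bar m\ll\mu$. This compatibility is already implicit in Definition~\ref{def:qsm}, since otherwise $\bar m(f)$ for $f \in L^\infty(M,\mu)$ would depend on the representative; once it is admitted, both approximation steps are routine and \eqref{eq:Lp_cont} is the only non-elementary input.
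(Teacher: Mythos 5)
Your \enquote{$L^p\Rightarrow L^\infty$} direction is essentially the paper's argument (restriction to an exhausting sequence of compact sets, then passage to the limit), and your observation about $\bar m\ll\mu$ is fair, though the paper leaves it equally implicit. The problem is in the converse direction \enquote{$L^\infty\Rightarrow L^p$}: you pass from $\bar m(g_n)$ to $\bar m(g)$ by dominated convergence on $(M,\bar m)$ with dominating function $|g|$, but the $\bar m$-integrability of $|g|$ is not known at that point -- $g$ is only assumed to lie in $L^p(M,\mu)$, and $\bar m$ is an arbitrary probability measure, so $\bar m(|g|)=\infty$ cannot be excluded a priori. In fact, the finiteness of $\bar m(g)$ is part of what the implication asserts (it is exactly what makes the identity $\bar m(U_tg)/\bar m(U_t\I_M)=\bar m(g)$ meaningful), so invoking it as the hypothesis of the dominated convergence theorem is circular.

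The paper sidesteps this by reducing to non-negative functions and using monotone convergence: for $0\leq g\in L^p(M,\mu)$ and $f_n:=g\wedge n$ one gets $\bar m(U_tf_n)=\bar m(U_t\I_M)\,\bar m(f_n)$, and letting $n\to\infty$ the identity $\bar m(U_tg)=\bar m(U_t\I_M)\,\bar m(g)$ holds in $[0,\infty]$; since $U_tg\in C_b(M)$ by \eqref{eq:Lp_cont} the left-hand side is finite and $\bar m(U_t\I_M)>0$, so $\bar m(g)<\infty$ comes out as a by-product, and the signed case follows by splitting $g=g_+-g_-$. Your argument can be repaired the same way (first apply the hypothesis to $|g|\wedge n$ and use monotone convergence or Fatou to conclude $\bar m(|g|)\leq \bar m(U_t|g|)/\bar m(U_t\I_M)<\infty$, and only then run your dominated convergence step), but as written the step is a genuine gap. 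A minor further caveat: both your pointwise limits $U_tf_n(x)\to U_tf(x)$ and the paper's monotone limits tacitly use that the kernel $u_t(x,\cdot)$ is non-negative (so that $\int u_t(x,y)\,\mu(dy)=U_t\I_M(x)<\infty$ by \eqref{A2}); this positivity is implicit throughout the paper, so it is not held against you, but it is worth stating if you keep the dominated-convergence formulation.
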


\begin{proof}
Fix $p \in [1,\infty)$ and $t> 2t_0$. It is enough to show both implications for non-negative functions.

\medskip
\enquote{$\Leftarrow$}
Let $0 \leq g \in L^p(M,\mu)$ and $f_n := g \wedge n \in L^{\infty}(M,\mu)$, $n \in \N$. We have  $f_n \uparrow g$ as $n \to \infty$, $\frac{\bar m(U_tf_n)}{\bar m(U_t\I_M)} = \bar m(f_n)$, $n \in \N$, and the implication follows with the monotone convergence theorem.

\medskip
\enquote{$\Rightarrow$}
Let $0 \leq f \in L^{\infty}(M,\mu)$ and set $g_n := \I_{K_n} f  \in L^p(M,\mu)$, $n \in \N$, where $(K_n)$ is an increasing sequence of compact sets such that $\bigcup_{n\in\nat} K_n = M$, see the discussion of topological preliminaries at the beginning of Section~\ref{sec:cpt}. We have $g_n \uparrow f$ as $n \to \infty$, $\frac{\bar m(U_tg_n)}{\bar m(U_t\I_M)} = \bar m(g_n)$, $n \in \N$, and we use monotone convergence once again.
\end{proof}

If \eqref{A3} holds for any $t_0>0$, then it also makes sense to speak about quasi-stationarity on $L^p$, $p<\infty$, for all $t>0$. In particular, the implications in Lemma~\ref{lem:qsm_Lp} hold true for every $t>0$. 

We come back to the measures $m, m^*$ defined in \eqref{eq:quasi_stat_meas}. Here, the condition \eqref{eq:quasi-stat-def} reads
\begin{gather}\label{eq:quasi-stat}
    m(U_tf) = e^{-\lambda_0 t} m(f)
    \quad\text{and}\quad
    m^*(U^*_tf) = e^{-\lambda_0 t} m^*(f), \quad t >0, \;f \in L^{\infty}(M,\mu).
\end{gather}
Since we assume that $\phi_0, \psi_0 \in L^{\infty}(M,\mu)$, \eqref{eq:quasi-stat} remains valid for $f\in L^p(M,\mu)$, $p \in [1,\infty]$. This can be easily seen  without Lemma~\ref{lem:qsm_Lp}. Indeed, it follows from the H\"older inequality and the fact that $\phi_0, \psi_0 \in L^1(M,\mu) \cap L^{\infty}(M,\mu) \subset L^q(M,\mu)$ for any $q \in [1,\infty]$.

Our arguments in Sections~\ref{sec:content} and~\ref{sec:pGSD} are based on the following observation, which is a direct consequence of the estimate in Lemma~\ref{lem:unif_conv}.
\begin{lemma}\label{lem:unif_conv_impr}
    Let \eqref{A0}--\eqref{A3} hold for some $t_0>0$ and denote by $C,\gamma$ the constants appearing in Lemma~\ref{lem:unif_conv}. For every $0 \leq s, r < t$ with $t-s-r > 2 t_0$
    \begin{align} \label{eq:unif_conv_impr}
        \left|e^{\lambda_0 t} u_t(x,y) -  (1/\Lambda) \phi_0(x) \psi_0(y) \right| \leq C \kappa(t,s,r,x,y), \quad \mu\text{-a.e.}\ x,y \in M,
    \end{align}
    holds true with the function
    \begin{gather*}
      \kappa(t,s,r,x,y)
        =
        \begin{cases}
            e^{-\gamma t} &\text{if\ \ }s = r = 0;
            \\[\medskipamount]
        	e^{-\gamma (t-s) } e^{\lambda_0 s} U_{s}\I_M(x)  &\text{if\ \ } s>0, r=0;
            \\[\medskipamount]
        	e^{-\gamma (t-r) } e^{\lambda_0 r} U^*_{r}\I_M(y)  &\text{if\ \ } s=0, r>0;
            \\[\medskipamount]
        	e^{-\gamma (t-s-r) } e^{\lambda_0 s} U_{s}\I_M(x) e^{\lambda_0 r} \, U^*_{r}\I_M(y)  &\text{if\ \ } s>0, r>0.
        \end{cases}
    \end{gather*}
\end{lemma}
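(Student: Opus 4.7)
The plan is to derive the four cases of \eqref{eq:unif_conv_impr} from Lemma~\ref{lem:unif_conv} by splitting the kernel via the Chapman--Kolmogorov identity (i.e.\ the semigroup property written at the level of kernels),
\begin{gather*}
   u_{a+b}(x,y) = \int u_a(x,z) u_b(z,y)\,\mu(dz), \qquad a,b>0,
\end{gather*}
valid for $\mu\otimes\mu$--a.e.\ $(x,y)$, and then by applying Lemma~\ref{lem:unif_conv} to the central factor whose time parameter exceeds $2t_0$.

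The case $s=r=0$ is exactly Lemma~\ref{lem:unif_conv}. For the case $s>0$, $r=0$, I would split $u_t = u_s * u_{t-s}$ and simultaneously use the eigenfunction identity $\phi_0(x) = e^{\lambda_0 s}\int u_s(x,z)\phi_0(z)\,\mu(dz)$ (which follows from \eqref{eq:eeqs} applied to $U_s\phi_0 = e^{-\lambda_0 s}\phi_0$) to rewrite
\begin{gather*}
    e^{\lambda_0 t} u_t(x,y) - (1/\Lambda)\phi_0(x)\psi_0(y)
    = e^{\lambda_0 s}\int u_s(x,z)\Bigl[e^{\lambda_0(t-s)} u_{t-s}(z,y) - (1/\Lambda)\phi_0(z)\psi_0(y)\Bigr]\,\mu(dz).
\end{gather*}
Since $t-s>2t_0$, Lemma~\ref{lem:unif_conv} bounds the bracket by $Ce^{-\gamma(t-s)}$, and integrating against $u_s(x,\cdot)$ produces the factor $U_s\I_M(x)$, giving exactly $\kappa(t,s,0,x,y)$.

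The case $s=0$, $r>0$ is symmetric: split $u_t = u_{t-r} * u_r$ and use the adjoint eigenequation $\psi_0(y) = e^{\lambda_0 r}\int u_r(z,y)\psi_0(z)\,\mu(dz)$ (coming from $U_r^*\psi_0 = e^{-\lambda_0 r}\psi_0$) to replace $\psi_0(y)$ inside the integral. Applying Lemma~\ref{lem:unif_conv} to $u_{t-r}$ and using $\int u_r(z,y)\,\mu(dz) = U_r^*\I_M(y)$ gives the bound $\kappa(t,0,r,x,y)$. For the full case $s>0, r>0$, I would iterate both splittings: write $u_t(x,y) = \int\!\!\int u_s(x,z_1) u_{t-s-r}(z_1,z_2) u_r(z_2,y)\,\mu(dz_1)\mu(dz_2)$ and, using both eigenfunction identities, rewrite $\phi_0(x)\psi_0(y)$ as an analogous double integral with $u_{t-s-r}(z_1,z_2)$ replaced by $(1/\Lambda)\phi_0(z_1)\psi_0(z_2)$; the difference is then controlled by applying Lemma~\ref{lem:unif_conv} to the middle kernel (admissible since $t-s-r>2t_0$) and carrying out the two integrations.

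The only delicate point is bookkeeping of null sets: Lemma~\ref{lem:unif_conv} holds $\mu\otimes\mu$--a.e., so after splitting one must invoke Fubini's theorem to ensure that, for $\mu$--a.e.\ $x$ (resp.\ $y$), the pointwise bound inside the integrand holds for $\mu$--a.e.\ $z$ (resp.\ $z_1,z_2$). This is routine and does not affect the final statement, which is also claimed only $\mu$--a.e.
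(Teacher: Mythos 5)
Your proposal is correct and follows essentially the same route as the paper's proof: split $u_t$ via Chapman--Kolmogorov into an initial/final factor and a middle factor of length $>2t_0$, rewrite $\phi_0(x)\psi_0(y)$ with the eigenequations under the same integrals, apply Lemma~\ref{lem:unif_conv} to the middle kernel, and integrate out the outer variables to produce $U_s\I_M(x)$ and $U_r^*\I_M(y)$. The only cosmetic difference is that you build up from the single-sided cases while the paper directly treats $s,r>0$, remarking that the others are similar; your explicit note on the Fubini/null-set bookkeeping is a harmless addition not spelled out in the paper.
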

\begin{proof}
If $s=r=0$, this is just the estimate from Lemma~\ref{lem:unif_conv}. We will only work out the case $s,r>0$. The remaining two cases follow with very similar arguments. Using the eigenequations
\begin{gather*}
    \phi_0(x) = e^{\lambda_0 s} \int_M u_s(x,z) \phi_0(z)\,\mu(dz)
    \quad\text{and}\quad
    \psi_0(y) = e^{\lambda_0 r} \int_M u_r(w,y) \psi_0(w) \,\mu(dw),
\end{gather*}
and the Chapman-Kolmogorov identities, we see that
\begin{align*}
    &e^{\lambda_0 t} u_t(x,y) - (1/\Lambda) \phi_0(x) \psi_0(y) \\
    &= e^{\lambda_0 s} \int_M u_s(x,z) \left[e^{\lambda_0 (t-s)} u_{t-s}(z,y) - (1/\Lambda) \phi_0(z) \psi_0(y) \right]\mu(dz) \\
    &= e^{\lambda_0 s} \int_M u_s(x,z) \left[e^{\lambda_0 r}\int_M \left[e^{\lambda_0 (t-s-r)} u_{t-s-r}(z,w) - (1/\Lambda) \phi_0(z) \psi_0(w) \right]  u_r(w,y) \,\mu(dw)  \right]\mu(dz).
\end{align*}
By Lemma~\ref{lem:unif_conv} we have
\begin{gather*}
	\left|e^{\lambda_0 (t-s-r)} u_{t-s-r}(z,w) - (1/\Lambda) \phi_0(z) \psi_0(w)\right| \leq C e^{-\gamma (t-s-r)},
\end{gather*}
which gives
\begin{align*}
    &\left|e^{\lambda_0 t} u_t(x,y) - (1/\Lambda) \phi_0(x) \psi_0(y)\right| \\
    &\qquad\leq C e^{-\gamma (t-s-r)} \left(e^{\lambda_0 s} \int_M u_s(x,z)  \,\mu(dz) \right)
	\left(e^{\lambda_0 r} \int_M u_r(w,y) \,\mu(dw)\right).
\end{align*}
This is the claimed bound.
\end{proof}

Our next theorem is the first main result of this section. It relates the exponential quasi-ergodicity of the semigroups $\left\{U_t: t \geq 0 \right\}$ and $\left\{U^*_t: t \geq 0 \right\}$ with the finiteness of their heat content. It also shows that the corresponding quasi-stationary probability distributions are unique. Recall that
\begin{gather*}
    Z(t) := \int U_{t}\I_M(x) \,\mu(dx)
    = \int U^*_{t}\I_M(y) \,\mu(dy)
    = \iint u_{t}(x,y)\,\mu(dx)\,\mu(dy), \quad t>0,
\end{gather*}
is the \emph{heat content} of $\left\{U_t: t \geq 0 \right\}$ or $\left\{U_t^*: t \geq 0 \right\}$. In general, $Z(t) \in (0,\infty]$. For each $t>0$ the condition $Z(t) < \infty$ is equivalent to the boundedness of the operators $U_t, U^*_{t}: L^{\infty}(M,\mu) \to L^1(M,\mu)$.

\bigskip
\begin{center}
\textbf{Unless otherwise stated, we use the value \boldmath $t_0 >0$ \unboldmath from Lemma~\ref{lem:unif_conv_impr} in all further statements as well as for the assumptions \eqref{A1}, \eqref{A3} and \eqref{A4}, and \boldmath $\cut$ \unboldmath denotes the exceptional set defined in \eqref{eq:cut}.}
\end{center}

\begin{theorem}\label{th:hcf_and_eqe}
    Let $M$ be a locally compact Polish space and assume that \eqref{A0}--\eqref{A3} hold. 
	If there exists some $t_1 \geq t_0$ such that the heat content is finite, i.e.\
\begin{gather*}
    Z(t_1) < \infty,
\end{gather*}
then we have the following:
\begin{enumerate}
\item\label{th:hcf_and_eqe-1}
    $\phi_0, \psi_0 \in L^1(M,\mu)$ and both measures $m$ and $m^*$ in \eqref{eq:quasi_stat_meas} are well defined.
\item\label{th:hcf_and_eqe-2}
    For every $p \in [1,\infty]$ there exist $C>0$ such that for every finite measure $\sigma$ on $M$ with $\supp \sigma \cap (M \setminus \cut) \neq \emptyset$ we have for all $t>6t_1$ and $t_1 \leq s \leq t/2$
    \begin{gather*}
        \sup_{\substack{f \in  L^p(M,\mu)\\ \left\|f\right\|_p \leq 1}} \left|\frac{\sigma(U_t f)}{\sigma(U_t \I_M)} - m(f)\right|
        \leq C  \, e^{-\gamma (t-s)} \, \frac{e^{\lambda_0 s}\sigma(U_{s} \I_M)}{\sigma(\phi_0)},
    \intertext{and}
        \sup_{\substack{f \in  L^p(M,\mu)\\ \left\|f\right\|_p \leq 1}} \left|\frac{\sigma(U^*_t f)}{\sigma(U^*_t \I_M)} - m^*(f)\right|
        \leq C  \, e^{-\gamma (t-s)} \, \frac{e^{\lambda_0 s}\sigma(U^*_{s} \I_M)}{\sigma(\psi_0)}.
\end{gather*}
\item\label{th:hcf_and_eqe-3}
    The measures $m$ and $m^*$ are the unique quasi-stationary probability measures 
	of $\left\{U_t: t \geq 0 \right\}$ and $\left\{U^*_t: t \geq 0 \right\}$, respectively, such that $m(M\setminus\cut)\cdot m^*(M\setminus\cut)>0$.
\end{enumerate}
\end{theorem}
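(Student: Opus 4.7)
My planned proof proceeds in three parts corresponding to the three claims.

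\textbf{Part (a) (integrability of ground states).} By Remark~\ref{rem:discuss_ass}, \eqref{A2}--\eqref{A3} imply $\phi_0,\psi_0 \in C_b(M) \subset L^\infty(M,\mu)$ upon choosing any $t > 2t_0$ in the eigenequations \eqref{eq:eeqs}. Then $\phi_0 \leq \|\phi_0\|_\infty \I_M$ pointwise; applying the positivity-preserving operator $U_{t_1}$ and using $U_{t_1}\phi_0 = e^{-\lambda_0 t_1}\phi_0$ yields $\phi_0 \leq e^{\lambda_0 t_1}\|\phi_0\|_\infty U_{t_1}\I_M$, which lies in $L^1(\mu)$ by hypothesis. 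Symmetrically $\psi_0 \leq e^{\lambda_0 t_1}\|\psi_0\|_\infty U^*_{t_1}\I_M \in L^1(\mu)$, so $m,m^*$ in \eqref{eq:quasi_stat_meas} are well-defined probability measures.

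\textbf{Part (b) (quasi-ergodic rate).} I would apply Lemma~\ref{lem:unif_conv_impr} with parameters $(s, t_1)$, which is admissible since $t > 6t_1$ and $s \leq t/2$ force $t - s - t_1 \geq t/2 - t_1 > 2t_1 \geq 2t_0$. Multiplying the pointwise kernel estimate by $|f(y)|$, respectively by $\I_M(y)$, and integrating against $\mu(dy)\,\sigma(dx)$ (Fubini), I would obtain, with $q$ conjugate to $p$,
\begin{align*}
|R_2| &:= \left|\sigma(U_t f) - \tfrac{1}{\Lambda} e^{-\lambda_0 t}\sigma(\phi_0)\textstyle\int f\psi_0\,d\mu\right| \leq C_1 e^{-\gamma(t-s-t_1)} e^{-\lambda_0(t-s)}\sigma(U_s\I_M)\|U^*_{t_1}\I_M\|_q\|f\|_p,\\
|R_1| &:= \left|\sigma(U_t\I_M) - \tfrac{1}{\Lambda} e^{-\lambda_0 t}\sigma(\phi_0)\|\psi_0\|_1\right| \leq C_1 e^{-\gamma(t-s-t_1)} e^{-\lambda_0(t-s)}\sigma(U_s\I_M)Z(t_1);
\end{align*}
note $U^*_{t_1}\I_M \in L^q(\mu)$ for every $q \in [1,\infty]$ by interpolation between $L^1$ (the heat-content hypothesis) and $L^\infty$ (\eqref{A2}). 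The algebraic identity
\begin{gather*}
\frac{\sigma(U_t f)}{\sigma(U_t\I_M)} - m(f) = \frac{R_2 \|\psi_0\|_1 - R_1 \int f\psi_0\,d\mu}{\sigma(U_t\I_M)\|\psi_0\|_1}
\end{gather*}
cancels the leading terms. Together with H\"older's inequality $|\int f\psi_0\,d\mu| \leq \|\psi_0\|_q \|f\|_p$, the numerator is controlled by a constant (depending on $p$ and the semigroup) times $e^{-\gamma(t-s-t_1)}e^{-\lambda_0(t-s)}\sigma(U_s\I_M)\|f\|_p$. For the denominator I would use the clean $\sigma$-independent lower bound $U_t\I_M \geq e^{-\lambda_0 t}\phi_0/\|\phi_0\|_\infty$ (a consequence of $\phi_0 \leq \|\phi_0\|_\infty\I_M$, positivity of $U_t$, and the eigenequation), so that $\sigma(U_t\I_M)\|\psi_0\|_1 \geq e^{-\lambda_0 t}\sigma(\phi_0)\|\psi_0\|_1/\|\phi_0\|_\infty$. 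Inserting this and absorbing $e^{(\gamma+\lambda_0)t_1}$ into the constant leaves precisely $C e^{-\gamma(t-s)} e^{\lambda_0 s}\sigma(U_s\I_M)/\sigma(\phi_0)\cdot\|f\|_p$, and taking the supremum over $\|f\|_p\leq 1$ yields the claim. Note that $\sigma(\phi_0)>0$ holds because $\phi_0$ is continuous and strictly positive on the open set $M\setminus\cut$, which by hypothesis intersects $\supp\sigma$. The adjoint estimate follows by symmetry.

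\textbf{Part (c) (uniqueness).} Let $\bar m$ be any quasi-stationary probability measure of $\{U_t\}$ with $\bar m(M\setminus\cut)>0$. A short second-countability argument (otherwise $M\setminus\cut$ would be covered by countably many $\bar m$-null open sets) shows $\supp\bar m\cap(M\setminus\cut)\neq\emptyset$, so part (b) is applicable with $\sigma=\bar m$. Since $\bar m(U_t f)/\bar m(U_t\I_M) = \bar m(f)$ by quasi-stationarity (extended from $L^\infty$ to all $f\in L^\infty$ by Definition~\ref{def:qsm}), the left-hand side of (b) equals $|\bar m(f)-m(f)|$, while the right-hand side, with $s = t_1$ fixed, tends to $0$ as $t\to\infty$. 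Hence $\bar m(f)=m(f)$ for every $f\in L^\infty(M,\mu)$, so $\bar m = m$; the argument for $m^*$ is identical.

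\textbf{Expected main obstacle.} The delicate point is producing a lower bound on $\sigma(U_t\I_M)$ whose $\sigma$-dependence matches the $\sigma(\phi_0)$ appearing in the target estimate, \emph{uniformly} in $\sigma$. Trying to derive it directly from Lemma~\ref{lem:unif_conv_impr} would, after integrating $\I_M$ against $\mu(dy)$, drag in the factor $\sigma(M)\cdot Z(\cdot)$ and destroy the claimed uniformity. The elementary sandwich $\phi_0/\|\phi_0\|_\infty \leq \I_M$ combined with positivity of $U_t$ and the eigenequation bypasses this issue and is, in my view, the key technical trick enabling the clean statement.
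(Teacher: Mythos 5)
Your proposal is correct and follows essentially the same route as the paper's own proof: part (a) via the pointwise bound $e^{-\lambda_0 t_1}\phi_0\leq\|\phi_0\|_\infty U_{t_1}\I_M$, part (b) via Lemma~\ref{lem:unif_conv_impr} with $r=t_1$, the same algebraic splitting of $\sigma(U_tf)/\sigma(U_t\I_M)-m(f)$, the lower bound \eqref{eq:by_phi} for the denominator, and H\"older plus $L^1\cap L^\infty$-interpolation for $U^*_{t_1}\I_M$ and $\psi_0$, and part (c) by plugging $\sigma=\bar m$ into (b) with $s=t_1$. The only cosmetic differences are the way the two error terms are packaged into a single fraction and the slightly abbreviated final identification $\bar m=m$ (the paper spells this out via indicators of compact sets and the uniqueness theorem for measures), neither of which affects correctness.
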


\begin{proof}
Recall that $U_t\I_M, U_t^*\I_M\in L^1(M,\mu)$, $t \geq t_1$. Thus, \ref{th:hcf_and_eqe-1} follows from the inequalities  $e^{-\lambda_0 t} \phi_0(x) \leq \left\|\phi_0\right\|_{\infty} U_t\I_M(x)$ and $e^{-\lambda_0 t} \psi_0(x) \leq \left\|\psi_0\right\|_{\infty} U^*_t\I_M(x)$, $x \in M$, $t>0$.

\medskip
We prove~\ref{th:hcf_and_eqe-2} only for the semigroup $\left\{U_t: t \geq 0 \right\}$. The proof for $\left\{U^*_t: t \geq 0 \right\}$ is literally the same. Let $p \in [1,\infty]$ and let $\sigma$ be a finite measure on $M$ such that $\supp \sigma \cap (M \setminus \cut) \neq \emptyset$. Note that $0 < \sigma(\phi_0) \leq e^{\lambda_0 t} \left\|\phi_0\right\|_{\infty} \sigma(U_t \I_M)$ and $0 < \sigma(\psi_0) \leq e^{\lambda_0 t} \left\|\psi_0\right\|_{\infty} \sigma(U^*_t \I_M)$, $t >0$. For every $f \in L^p(M,\mu)$ and $t>6t_1$ (in particular, $t-t_1-s>2t_0$) we may write
\begin{align*}
    \frac{\sigma(U_t f)}{\sigma(U_t \I_M)} - m(f)
    &= \frac{ e^{\lambda_0 t}\sigma(U_t f) - m(f)e^{\lambda_0 t}\sigma(U_t \I_M)}{e^{\lambda_0 t}\sigma(U_t \I_M)}\\
    &= \frac{ e^{\lambda_0 t}\sigma(U_t f) - (1/\Lambda) \, \sigma(\phi_0) \int_M f(y) \psi_0(y) \,\mu(dy)}{e^{\lambda_0 t}\sigma(U_t \I_M)} \\
    &\qquad\mbox{} + m(f) \, \frac{(1/\Lambda) \, \sigma(\phi_0) \int_M\psi_0(y) \,\mu(dy) - e^{\lambda_0 t}\sigma(U_t \I_M)}{e^{\lambda_0 t}\sigma(U_t \I_M)} .
\end{align*}
Since for all $x \in M$,
\begin{align*}
    &e^{\lambda_0 t}U_t f(x) - (1/\Lambda) \, \phi_0(x) \int_M f(y) \psi_0(y) \,\mu(dy)\\
    &\qquad = \int_M f(y) \left(e^{\lambda_0 t}u_t(x,y) - (1/\Lambda) \, \phi_0(x) \psi_0(y)\right) \mu(dy),
\end{align*}
and
\begin{align*}
    &e^{\lambda_0 t}U_t \I_M(x) - (1/\Lambda) \, \phi_0(x) \int_M \psi_0(y) \,\mu(dy)\\
    &\qquad = \int_M \left(e^{\lambda_0 t}u_t(x,y) - (1/\Lambda) \, \phi_0(x) \psi_0(y)\right) \mu(dy),
\end{align*}
we can apply Lemma~\ref{lem:unif_conv_impr} (with $t_1 \leq s \leq t/2$ and $r=t_1$) and integrate with respect to $\sigma$ to get
\begin{align*}
    &\left|e^{\lambda_0 t}\sigma(U_t f) - (1/\Lambda) \, \sigma(\phi_0) \int_M f(y) \psi_0(y) \,\mu(dy)\right|\\
    &\qquad \leq C e^{-\gamma(t-t_1-s)} e^{\lambda_0 (t_1+s)} \sigma(U_{s}\I_M) \int_M |f(y)| U^*_{t_1}\I_M(y) \,\mu(dy),
\end{align*}
and
\begin{align*}
    &\left|e^{\lambda_0 t}\sigma(U_t \I_M) - (1/\Lambda) \, \sigma(\phi_0) \int_M\psi_0(y) \,\mu(dy)\right| \\
    &\qquad\leq C e^{-\gamma(t-t_1-s)} e^{\lambda_0 (t_1+s)} \sigma(U_{s}\I_M) Z(t_1).
\end{align*}
Note that
\begin{align} \label{eq:by_phi}
    e^{-\lambda_0t}\sigma(\phi_0) = \sigma(U_t\phi_0) \leq \|\phi_0\|_\infty\sigma(U_t\I_M)
    \implies
    e^{\lambda_0 t}\sigma(U_t \I_M) \geq \frac{\sigma(\phi_0)}{\left\|\phi_0\right\|_{\infty}}.
\end{align}
This gives
\begin{align*}
\left|\frac{\sigma(U_t f)}{\sigma(U_t \I_M)} - m(f)\right| \leq
           \widetilde C  e^{-\gamma (t-s)} \left(\int_M |f(y)| U^*_{t_1}\I_M(y) \,\mu(dy) + m(|f|) Z(t_1)\right)\frac{e^{\lambda_0 s}\sigma(U_{s}\I_M)}{\sigma(\phi_0)},
\end{align*}
with $\widetilde C := C  e^{(\lambda_0+\eta) t_1} \left\|\phi_0\right\|_{\infty}$, where $C,\gamma$ are the constants from Lemma~\ref{lem:unif_conv_impr}. Finally, by the H\"ol\-der inequality,
\begin{align*}
    \left|\frac{\sigma(U_t f)}{\sigma(U_t \I_M)} - m(f)\right|
    \leq \widetilde C \left(\left\|U^*_{t_1}\I_M\right\|_q + Z(t_1)\frac{\left\|\psi_0\right\|_q}{\left\|\psi_0\right\|_1}\right)
    e^{-\gamma (t-s)}\frac{\sigma(U_{s}\I_M)}{\sigma(\phi_0)} \left\|f\right\|_p,
\end{align*}
where $q$ and $p$ are conjugate exponents. Notice that $\left\|U^*_{t_1}\I_M\right\|_q < \infty$ and $\left\|\psi_0\right\|_q < \infty$. Indeed, if $p=1$, then $q=\infty$, and the claim follows from \eqref{A2} and \eqref{A3} combined with \eqref{eq:eeqs}. If $p=\infty$ and $q=1$, then the claim follows from the fact that $\left\|U^*_{t_1}\I_M\right\|_1 = Z(t_1)< \infty$ and from~\ref{th:hcf_and_eqe-1}. If $1 < p < \infty$, then also $1< q < \infty$, and we have
\begin{gather*}
    \left\|U^*_{t_1}\I_M\right\|_q^q
    = \int_M U^*_{t_1}\I_M(x)(U^*_{t_1}\I_M(x))^{q-1} \,\mu(dx)
    \leq \left\|U^*_{t_1}\I_M\right\|_{\infty}^{q-1} Z(t_1)
    < \infty
\end{gather*}
and
\begin{gather*}
    \left\|\psi_0\right\|_q^q
    = \int_M \psi_0(x)\psi_0^{q-1}(x) \,\mu(dx)
    \leq \left\|\psi_0\right\|_{\infty}^{q-1} \left\|\psi_0\right\|_1
    < \infty.
\end{gather*}
This completes the proof of~\ref{th:hcf_and_eqe-2}.

In order to show~\ref{th:hcf_and_eqe-3}, assume that both $m$ and $\bar m$ are quasi-stationary probability measures and $\supp \bar m \cap (M \setminus \cut) \neq \emptyset$. Since $\bar m$ is finite, we can use $\bar m = \sigma$ in Part~\ref{th:hcf_and_eqe-2} and we get, because of the quasi-stationary property \eqref{eq:quasi-stat-def},
\begin{gather*}
        0 = \lim_{t\to\infty}\left|\frac{\bar m(U_t f)}{\bar m(U_t \I_M)} - m(f)\right|
        = \left|\bar m(f) - m(f)\right|
\end{gather*}
(uniformly) for all $f\in L^\infty(M,\mu)$ with $\|f\|_{\infty}\leq 1$. By our assumptions, $\mu$ has full topological support, it is locally finite -- hence finite on all compact sets -- and the compact sets generate the Borel $\sigma$-algebra on $M$. Moreover, $M$ is $\sigma$-compact, i.e.\ there is an increasing sequence of compact sets $K_n$ such that $M = \bigcup_{n\geq 1}K_n$. Taking $f=\I_K$ for any compact set $K$, we conclude that $m = \bar m$ first on the compact sets and, using the standard uniqueness theorem for measures, on all Borel sets. The proof of the uniqueness of $m^*$ is similar.
\end{proof}

\begin{corollary}\label{cor:hcf_uqe}
   Let $M$ be a locally compact Polish space and assume that \eqref{A0}--\eqref{A3} hold, 
	and assume that there exists some $t_1 \geq t_0$ such that $Z(t_1) < \infty$.
\begin{enumerate}
\item\label{cor:hcf_uqe-a}
    Both semigroups $\left\{U_t: t \geq 0 \right\}$ and $\left\{U^*_t: t \geq 0 \right\}$ are quasi-ergodic with exponential time rate $e^{-\gamma t}$ and  space rates $U_{t_1}\I_M(x)/\phi_0(x)$ and $U^*_{t_1}\I_M(x)/\psi_0(x)$, respectively. More precisely, for every $p \in [1,\infty]$ there is a constant $C>0$ such that for every $f \in L^p(M,\mu)$ with $\left\|f\right\|_p \leq 1$
    \begin{gather*}
        \left|\frac{U_t f(x)}{U_t \I_M(x)} - m(f)\right|
        \leq C  \, e^{-\gamma t} \, \frac{U_{t_1} \I_M(x)}{\phi_0(x)},
        \quad t >6t_1, \; x \in M \setminus \cut,
    \end{gather*}
    and
    \begin{gather*}
        \left|\frac{U^*_t f(x)}{U^*_t \I_M(x)} - m^*(f)\right|
        \leq C  \, e^{-\gamma t} \, \frac{U^*_{t_1} \I_M(x)}{\psi_0(x)},
        \quad t > 6t_1, \; x \in M \setminus \cut.
    \end{gather*}
    Moreover,
    the measures $m$, and $m^*$, are the only quasi-stationary measures of $\left\{U_t: t \geq 0 \right\}$, and $\left\{U^*_t: t \geq 0 \right\}$, respectively, such that $m(M\setminus\cut)\cdot m^*(M\setminus\cut)>0$.

\item\label{cor:hcf_uqe-b}
    Since $U_t \I_M, U^*_t \I_M \in C_{b}(M)$ and $\phi_0$, $\psi_0$ are continuous and strictly positive on $M$, the exponential quasi-ergodicity  in~\ref{cor:hcf_uqe-a} is uniform for all $x$ in compact subsets of $M \setminus \cut$. 
\end{enumerate}
\end{corollary}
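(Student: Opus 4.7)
The plan is to specialize Theorem~\ref{th:hcf_and_eqe}\ref{th:hcf_and_eqe-2} to the point-mass measure $\sigma = \delta_x$ for a fixed $x \in M \setminus \cut$. First I would verify the hypothesis: $\delta_x$ is a finite (probability) measure, and since $x \notin \cut$ we have $\supp \delta_x \cap (M \setminus \cut) = \{x\} \neq \emptyset$ and $\delta_x(\phi_0) = \phi_0(x) > 0$. The expressions $\delta_x(U_t f) = U_t f(x)$ and $\delta_x(U_{t_1}\I_M) = U_{t_1}\I_M(x)$ are meaningful pointwise because~\eqref{A2}--\eqref{A3} (via~\eqref{eq:Lp_cont}) guarantee that $U_t f$ has a continuous representative for $t > 2t_0$, and throughout we have $t > 6 t_1 \geq 6 t_0$.

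Choosing $s = t_1$ in Theorem~\ref{th:hcf_and_eqe}\ref{th:hcf_and_eqe-2} then gives
\begin{gather*}
    \left|\frac{U_t f(x)}{U_t \I_M(x)} - m(f)\right|
    \leq C \, e^{-\gamma(t - t_1)} \, \frac{e^{\lambda_0 t_1} U_{t_1} \I_M(x)}{\phi_0(x)}
\end{gather*}
uniformly in $f \in L^p(M,\mu)$ with $\|f\|_p \leq 1$. Absorbing the factor $e^{(\gamma + \lambda_0) t_1}$ into the prefactor $C$ yields the bound stated in~\ref{cor:hcf_uqe-a}. The identical argument applied to $\{U_t^*: t \geq 0\}$ (with $\psi_0$ replacing $\phi_0$ and $m^*$ replacing $m$) gives the second estimate. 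The uniqueness claim is inherited verbatim from Theorem~\ref{th:hcf_and_eqe}\ref{th:hcf_and_eqe-3}.

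For~\ref{cor:hcf_uqe-b}, I would fix a compact set $K \subset M \setminus \cut$. By~\eqref{A2} and~\eqref{eq:eeqs} the functions $U_{t_1}\I_M$ and $\phi_0$ both lie in $C_b(M)$, and $\phi_0 > 0$ on $M \setminus \cut$. Continuity and compactness give $\inf_{x \in K} \phi_0(x) =: \delta_K > 0$, so the space-rate factor $U_{t_1}\I_M(x)/\phi_0(x)$ is uniformly bounded by $\|U_{t_1}\I_M\|_\infty / \delta_K$ on $K$. Combined with the bound from~\ref{cor:hcf_uqe-a}, this yields the claimed uniform convergence on $K$; the adjoint statement follows by the analogous argument with $\psi_0$.

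There is no real obstacle in this proof — it is a pointwise specialization of Theorem~\ref{th:hcf_and_eqe}. The only minor subtlety worth checking is that $U_t f(x)$ is well-defined independently of the $\mu$-representative of $f$, which is precisely what~\eqref{eq:Lp_cont} (derivable from~\eqref{A2} and~\eqref{A3}) provides, together with the strictly positivity of $\phi_0, \psi_0$ on $M \setminus \cut$ that prevents the denominators from vanishing.
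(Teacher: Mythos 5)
Your proof is correct and is exactly the specialization the paper intends (the paper gives no separate proof, but the introduction explicitly says Corollary~\ref{cor:hcf_uqe} is obtained by taking $\sigma=\delta_x$ in Theorem~\ref{th:hcf_and_eqe}). The choice $s=t_1$, the absorption of $e^{(\gamma+\lambda_0)t_1}$ into the constant, the check that $t_1\le t/2$ holds because $t>6t_1$, and the compactness argument for part~\ref{cor:hcf_uqe-b} are all sound.
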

If $\mu(M) < \infty$, then the heat content is automatically finite for large times, and we recover some of the results from \cite{bib:ZSS}.

\section{Progressive ground state domination and heat content} \label{sec:pGSD_and_hc}

In this section we will need the notions of \emph{hemi-compact space} and \emph{exhausting} family of sets.

\begin{definition}\label{def:ex_family}
The space $M$ is said to be \emph{hemi-compact} if there exists an \emph{exhausting family of sets} in $M$, i.e.\ a family
$\left\{K_t : t \geq t_1\right\}$ (defined for some $t_1 \geq 0$) of compact subsets of $M$ such that:
\begin{enumerate}
\item
$K_t \subset K_s$, for every $t_1 \leq t < s$;
\item
$\bigcup_{t \geq t_1} K_t = M$;
\item
for every compact set $K \subset M$ there exists some $t \geq t_1$ such that $K \subset K_t$.
\end{enumerate}
\end{definition}
Since $M$ is a locally compact Polish space, $M$ is hemi-compact; moreover the concepts of $\sigma$-compactness and hemi-compactness coincide in such spaces, see e.g.\ \cite[p.~195]{bib:E}. If $M$ is compact, it is also hemi-compact with a trivial exhausting family $\left\{K_t : t \geq t_1\right\}$ such that $K_t=M$ for all $t \geq t_1$. Assume now that \eqref{A0}--\eqref{A3} hold with some $t_0>0$. The following properties will be central to our investigations in this section.

\begin{definition}[aGSD and pGSD]\label{def:pGSD}
    Let $\left\{U_t: t \geq 0 \right\}$ be a semigroup of operators on $L^2(M,\mu)$ satisfying the conditions \eqref{A0}--\eqref{A3} and let $\lambda_0$ and $\phi_0$ be the corresponding ground state eigenvalue and normalized eigenfunction.
    \begin{enumerate}
    \item\label{def:pGSD-a}
    The semigroup $\left\{U_t: t \geq 0 \right\}$ is said to be \emph{asymptotically ground state dominated} (aGSD, for short), if
    there exist constants $C, t_1>0$ such that
    \begin{align}\label{eq:aGSD}
        U_t \I_M(x) \leq C e^{-\lambda_0 t} \phi_0(x), \quad t \geq t_1, \; x \in M.
    \end{align}
    \item\label{def:pGSD-b}
    The semigroup $\left\{U_t: t \geq 0 \right\}$ is said to be \emph{progressively ground state dominated} (pGSD, for short), if
    there exist constants $C, t_1>0$ and an exhausting family of compact sets $\left\{K_t : t \geq t_1\right\}$ such that
    \begin{align}\label{eq:pGSD}
        \I_{K_t}(x) \cdot U_t \I_M(x)  \leq C e^{-\lambda_0 t} \phi_0(x), \quad t \geq t_1, \; x \in M.
    \end{align}
    \end{enumerate}
\end{definition}

\begin{remark} \label{rem:opposite}
\begin{enumerate}
\item\label{rem:opposite-a}
    Because of the eigenequation $e^{-\lambda_0 t} \phi_0 = U_t \phi_0$ there is some kind of reverse inequality to \eqref{eq:aGSD}: $e^{-\lambda_0 t} \phi_0(x) \leq  \left\|\phi_0\right\|_{\infty} U_t\I_M(x)$, $x \in M$, $t>0$.

\item\label{rem:opposite-x}
    Notice that both $U_t\I_M(x)$ and $\phi_0(x)$ are continuous functions. Since $\mu(\cut)=0$ and since $\mu$ charges every open set, the interior of $\cut$ is empty and $\overline{M\setminus\cut}=M$. This means that we can replace in \eqref{eq:aGSD} and \eqref{eq:pGSD} the condition \enquote{$x\in M$} by \enquote{$x\in M\setminus\cut$}.

\item\label{rem:opposite-b}
    aGSD was studied in the papers \cite{bib:KL15a,bib:KKL2018} in relation to intrinsic ultracontractivity and hypercontractivity. It is shown in \cite{bib:KSchi20}
    that aGSD is a much more restrictive condition than the finiteness of heat content for large times. It follows directly from Lemma~\ref{lem:unif_conv_impr} that aGSD of $\left\{U_t: t \geq 0 \right\}$ implies $\phi_0 \in L^1(M,\mu)$ as well as the finiteness of the heat content for large times; this is also true for $\left\{U^*_t: t \geq 0 \right\}$ and $\psi_0$.

\item\label{rem:opposite-c}
    pGSD was recently introduced in \cite{bib:KSchi20} as a direct consequence of the concept of progressive intrinsic ultracontractivity (pIUC) for Schr\"odinger semigroups. In this and the next section we will explain the impact of pGSD on the quasi-ergodic behaviour of compact semigroups and discuss the relation between pGSD and the finiteness of the heat content.
		
\item\label{rem:opposite-d}
    From the definition we see that aGSD always implies pGSD. Recall that $U_t\I_M$, $U^*_t\I_M$, $\phi_0$, $\psi_0  \in C_b(M)$. If the space $M$ is compact and $\phi_0$, $\psi_0$ are strictly positive everywhere (i.e.\ $\cut = \emptyset$), then $\inf_M \phi_0 > 0$ and $\inf_M \psi_0 > 0$, so aGSD always holds. In particular, pGSD holds with an exhausting family $\left\{K_t : t \geq t_1\right\}$ such that $K_t=M$ for all $t \geq t_1$.
\end{enumerate}
\end{remark}

We first show that aGSD of $\left\{U_t: t \geq 0 \right\}$  implies the exponential uniform (for the whole space $M$) quasi-ergodicity of this semigroup on every $L^p(M,\mu)$, $p \geq 1$; as before, this also holds for $\left\{U^*_t: t \geq 0 \right\}$. In particular, this gives a relatively short and elementary proof of the main results of \cite[Theorem 1, Corollary 2]{bib:KP} in our framework. We also have  some converse statement.

\begin{corollary}\label{cor:agsd-euqe}
   Let $M$ be a general locally compact \textup{(}but not necessarily compact\textup{)} Polish space and assume that \eqref{A0}--\eqref{A3} hold.
    Then we have the following assertions.
	\begin{enumerate}
	\item\label{cor:agsd-euqe-a}
    If the semigroup $\left\{U_t: t \geq 0 \right\}$ is aGSD for some $t_1 \geq t_0$, then there is for every $p \in [1,\infty]$ a constant $C>0$, such that for every $f \in L^p(M,\mu)$ with $\left\|f\right\|_p \leq 1$
    \begin{gather}\label{eq:agsd}
       \sup_{x \in M \setminus \cut} \left|\frac{U_t f(x)}{U_t \I_M(x)} - m(f)\right|
        \leq C  \, e^{-\gamma t},
        \quad t >6t_1.
    \end{gather}
	\item\label{cor:agsd-euqe-b}
	If there is a constant $C>0$ such that
    \begin{gather*}
        \sup_{x \in M \setminus \cut} \left|\frac{U_t \phi_0(x)}{U_t \I_M(x)} - C\right| \to 0
            \quad\text{as}\quad t \to \infty,	
    \end{gather*}	
    then the semigroup $\left\{U_t: t \geq 0 \right\}$ is aGSD.
	\end{enumerate}
   Both assertions~\ref{cor:agsd-euqe-a} and~\ref{cor:agsd-euqe-b} are also true for the semigroup $\left\{U^*_t: t \geq 0 \right\}$ and the quasi-stationary measure $m^*$. Under aGSD,
	the measures $m$, and $m^*$, are the only quasi-stationary measures of $\left\{U_t: t \geq 0 \right\}$, and $\left\{U^*_t: t \geq 0 \right\}$, respectively, such that $m(M\setminus\cut)\cdot m^*(M\setminus\cut)>0$.
\end{corollary}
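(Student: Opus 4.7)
The plan for part~\ref{cor:agsd-euqe-a} is to reduce to Corollary~\ref{cor:hcf_uqe}. The aGSD inequality $U_t\I_M(x)\le C e^{-\lambda_0 t}\phi_0(x)$ immediately yields $\phi_0\in L^1(M,\mu)$ and $Z(t)<\infty$ for large $t$ (Remark~\ref{rem:opposite}\,\ref{rem:opposite-c}), so Corollary~\ref{cor:hcf_uqe} applies. For $f\in L^p(M,\mu)$ with $\|f\|_p\le 1$ it provides the pointwise estimate
\begin{gather*}
    \left|\frac{U_t f(x)}{U_t \I_M(x)} - m(f)\right|
    \leq C_1\, e^{-\gamma t}\, \frac{U_{t_1}\I_M(x)}{\phi_0(x)},
    \quad x\in M\setminus\cut,\; t>6t_1.
\end{gather*}
Inserting aGSD one more time bounds the space factor $U_{t_1}\I_M(x)/\phi_0(x)$ by a constant $C_2=C\,e^{-\lambda_0 t_1}$ uniformly in $x\in M\setminus\cut$, which after absorbing constants is exactly \eqref{eq:agsd}. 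The same reasoning, with $\psi_0$ in place of $\phi_0$ and aGSD assumed for $\{U_t^*: t\geq 0\}$, handles the adjoint case.

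For part~\ref{cor:agsd-euqe-b} the pivotal move is the eigenequation $U_t\phi_0=e^{-\lambda_0 t}\phi_0$, which rewrites the hypothesis as
\begin{gather*}
    \sup_{x \in M \setminus \cut} \left|\frac{e^{-\lambda_0 t}\phi_0(x)}{U_t \I_M(x)} - C\right| \longrightarrow 0 \quad\text{as } t \to \infty.
\end{gather*}
Because $C>0$ is strictly positive, one may choose $t_1\ge t_0$ so that this supremum is below $C/2$ for all $t\ge t_1$; then $e^{-\lambda_0 t}\phi_0(x)/U_t\I_M(x)\ge C/2$ on $M\setminus\cut$, equivalently
\begin{gather*}
    U_t\I_M(x) \leq (2/C)\, e^{-\lambda_0 t}\phi_0(x), \qquad x\in M\setminus\cut,\; t\ge t_1.
\end{gather*}
Both sides are continuous functions of $x$ by \eqref{A2}, and $M\setminus\cut$ is dense in $M$ by Remark~\ref{rem:opposite}\,\ref{rem:opposite-x}, so the inequality extends to all $x\in M$; this is precisely aGSD with constant $2/C$. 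The symmetric argument applies to $\{U_t^*: t\geq 0\}$ verbatim, with $\phi_0$ and $m$ replaced by $\psi_0$ and $m^*$.

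The uniqueness claim is now immediate: aGSD for $\{U_t\}$ gives $Z(t_1)<\infty$ for some $t_1\ge t_0$, so Theorem~\ref{th:hcf_and_eqe}\,\ref{th:hcf_and_eqe-3} identifies $m$ and $m^*$ as the only quasi-stationary probability measures of $\{U_t\}$ and $\{U_t^*\}$, respectively, satisfying $m(M\setminus\cut)\cdot m^*(M\setminus\cut)>0$. The step I expect to need the most attention is the inversion in~\ref{cor:agsd-euqe-b}: using $C>0$ to turn the two-sided uniform convergence into a uniform lower bound on the ratio, and then relying on the density of $M\setminus\cut$ in $M$ to promote the pointwise aGSD estimate from $M\setminus\cut$ to all of $M$. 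Neither is a serious obstacle, but both must be invoked explicitly, since the rest of the argument is a bookkeeping reduction to results already established in Sections~\ref{sec:cpt} and~\ref{sec:content}.
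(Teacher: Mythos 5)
Your argument is correct and follows essentially the same route as the paper: part~\ref{cor:agsd-euqe-a} by combining the finiteness of the heat content under aGSD (Remark~\ref{rem:opposite}, which you cite by the slightly wrong item label -- it is the item on aGSD, not the one on pGSD) with Corollary~\ref{cor:hcf_uqe} and absorbing the space rate via aGSD, and part~\ref{cor:agsd-euqe-b} via the eigenequation and the uniform lower bound $C/2$ on the ratio, extended from $M\setminus\cut$ to $M$ by continuity. The uniqueness via Theorem~\ref{th:hcf_and_eqe}.\ref{th:hcf_and_eqe-3} is likewise the paper's intended reduction, so no further changes are needed.
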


\begin{proof}
Assertion~\ref{cor:agsd-euqe-a} follows directly from a combination of Remark~\ref{rem:opposite}.\ref{rem:opposite-b} and Corollary~\ref{cor:hcf_uqe}.\ref{cor:hcf_uqe-a}. Let us show~\ref{cor:agsd-euqe-b}. Because of the eigenequation $U_t \phi_0 = e^{-\lambda_0 t} \phi_0$, we see that there is some $t_1>0$ such that
\begin{gather*}
    \left|\frac{e^{-\lambda_0 t} \phi_0(x)}{U_t \I_M(x)} - C\right| \leq \frac{C}{2}, \quad t \geq t_1, \ x \in M \setminus \cut,
\end{gather*}
which implies
\begin{gather*}
    \frac{e^{-\lambda_0 t} \phi_0(x)}{U_t \I_M(x)} \geq \frac{C}{2}, \quad t \geq t_1, \ x \in M \setminus \cut.
\end{gather*}
Since $C >0$ and both functions $\phi_0$ and $U_t \I_M$ are bounded and continuous, this completes the proof.
\end{proof}

As explained in Remark~\ref{rem:opposite}.\ref{rem:opposite-d}, if the space $M$ is compact (hence, $\mu(M) <\infty$, due to the local finiteness of $\mu$) and the ground states are everywhere strictly positive, the semigroups $\left\{U_t: t \geq 0 \right\}$, $\left\{U^*_t: t \geq 0 \right\}$ are automatically aGSD, and the heat content is finite for large times. In particular, Corollary~\ref{cor:agsd-euqe} implies that both semigroups are exponentially uniformly (on the whole space $M$) quasi-ergodic.

If the space $M$ is non-compact, then aGSD is a rather restrictive condition. However, as illustrated in \cite{bib:KSchi20}, one should expect that in this case a wide range of non-aGSD compact semigroups are still pGSD. From now on we focus on the quasi-ergodicity of pGSD semigroups in the setting of non-compact spaces.

We first show that the finiteness of the heat content implies pGSD for both $\left\{U_t: t \geq 0 \right\}$ and $\left\{U^*_t: t \geq 0 \right\}$; consequently, we get progressive uniform quasi-ergodicity with an exponential time-rate. To do so, we need a lemma. To keep the proofs simple, we assume in Lemma~\ref{lem:eta_def} and Theorem~\ref{th:pGSD} that $\cut = \emptyset$, i.e.\ that the ground states are everywhere strictly positive.

\begin{lemma} \label{lem:eta_def}
    Let $M$ be a locally compact, but not compact, Polish space. Assume that \eqref{A0}--\eqref{A4} hold, $\cut = \emptyset$, and
    let $\left\{K_t : t \geq t_0\right\}$ be an exhausting family of compact sets in $M$. Set
\begin{gather*}
    h(t) := \min\left\{\inf_{x \in K_t} \phi_0(x),\inf_{x \in K_t} \psi_0(x) \right\}, \quad t \geq t_0,
\end{gather*}
and consider the generalized inverse of $h$,
\begin{gather*}
    h^{-1}(s) := \inf\left\{t \geq t_0: h(t) < s\right\}, \quad s \in (0, h(t_0)].
\end{gather*}
Define the function $\eta: [-\gamma^{-1} \log h(t_0),\infty) \to [t_0,\infty)$ by
\begin{align}\label{eg:eta_def}
\eta(t) := h^{-1}(s)\big{|}_{s=e^{-\gamma t}} = h^{-1}(e^{-\gamma t}).
\end{align}
Then $\eta$ is increasing on $[-\gamma^{-1}\log h(t_0),\infty)$, $\eta(t) \to \infty$ as $t \to \infty$ and
\begin{gather*}
e^{\gamma t} \min\left\{\inf_{x \in K_{\eta(t)}} \phi_0(x),\inf_{x \in K_{\eta(t)}} \psi_0(x) \right\} = e^{\gamma t} h(\eta(t)) = 1,  \qquad t \geq -\gamma^{-1}\log h(t_0).
\end{gather*}
\end{lemma}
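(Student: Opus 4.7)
The plan is to establish the three assertions in turn, obtaining the properties of $\eta$ as a formal consequence of four basic properties of $h$: positivity, monotonicity, vanishing at infinity, and continuity.

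Positivity $h(t)>0$ for $t\geq t_0$ and monotonicity of $h$ are essentially immediate: $\cut=\emptyset$ together with the continuity and strict positivity of $\phi_0,\psi_0$ and the compactness of $K_t$ give $h(t)>0$, while the nesting $K_t\subset K_s$ forces $h$ to be non-increasing. The decay $h(t)\to 0$ as $t\to\infty$ is the only place where assumption \eqref{A4} and the non-compactness of $M$ are used. Starting from the eigenequations (e.g.\ $U_{t_0}\phi_0=e^{-\lambda_0 t_0}\phi_0$) and the crude bound $\phi_0\leq\|\phi_0\|_\infty\I_M$ (analogously for $\psi_0$), one obtains the pointwise estimates
\begin{gather*}
\phi_0\leq e^{\lambda_0 t_0}\|\phi_0\|_\infty U_{t_0}\I_M\qquad\text{and}\qquad \psi_0\leq e^{\lambda_0 t_0}\|\psi_0\|_\infty U^*_{t_0}\I_M.
\end{gather*}
For any $\epsilon>0$, \eqref{A4} supplies a compact $K\subset M$ off which $\min(\phi_0,\psi_0)\leq C\epsilon$; non-compactness of $M$ gives a point $x_0\in M\setminus K$, which by property (iii) of the exhausting family lies in some $K_s$, forcing $h(s)\leq C\epsilon$ and hence $h(t)\to 0$ by monotonicity.

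With $h$ now known to be positive, non-increasing, and tending to zero, the first two assertions about $\eta$ follow formally. For $s_1<s_2$ in $(0,h(t_0)]$ the set $\{t\geq t_0:h(t)<s_1\}$ is contained in $\{t\geq t_0:h(t)<s_2\}$, so $h^{-1}$ is non-increasing; composing with the decreasing map $t\mapsto e^{-\gamma t}$ shows that $\eta$ is non-decreasing on its domain. The divergence $\eta(t)\to\infty$ is a short contradiction: if $\eta(t_n)\leq T<\infty$ along some sequence $t_n\to\infty$, one could find $s_n\in[t_0,T]$ with $h(s_n)<e^{-\gamma t_n}\to 0$, contradicting the positive lower bound $h(s_n)\geq h(T)>0$ obtained from monotonicity of $h$.

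The main obstacle is the exact identity $e^{\gamma t}h(\eta(t))=1$. Directly from the infimum defining $\eta(t)=h^{-1}(e^{-\gamma t})$ and the monotonicity of $h$ one always obtains $h(s)\geq e^{-\gamma t}$ for every $s<\eta(t)$ and $h(s)<e^{-\gamma t}$ for every $s>\eta(t)$, so passing to one-sided limits produces the sandwich
\begin{gather*}
h(\eta(t)+)\leq e^{-\gamma t}\leq h(\eta(t)-).
\end{gather*}
The identity therefore reduces to the continuity of $h$ at the point $\eta(t)$, and verifying this continuity is the technical heart of the argument. The plan here is to combine the continuity of $\phi_0,\psi_0$ on $M$ with the outer and inner monotone limits $K_{t+}:=\bigcap_{s>t}K_s$ and $K_{t-}:=\overline{\bigcup_{s<t}K_s}$ of the nested family, which sandwich $K_t$; using strict positivity and uniform continuity of $\min(\phi_0,\psi_0)$ on compacta, one shows that the infimum over $K_{t+}$ cannot be strictly smaller than the infimum over $K_{t-}$, i.e.\ $h(t-)=h(t)=h(t+)$. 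Once this continuity of $h$ is secured, the sandwich collapses to the stated equality $h(\eta(t))=e^{-\gamma t}$, completing the proof.
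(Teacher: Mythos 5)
Your overall route is the same as the paper's: you reduce everything to four properties of $h$ (strict positivity from $\cut=\emptyset$ and compactness of $K_t$; monotonicity from the nesting; decay to $0$ from the eigenequation bounds $\phi_0\leq e^{\lambda_0 t_0}\|\phi_0\|_\infty U_{t_0}\I_M$, $\psi_0\leq e^{\lambda_0 t_0}\|\psi_0\|_\infty U^*_{t_0}\I_M$ together with \eqref{A4} and the non-compactness of $M$; continuity), and then obtain monotonicity and divergence of $\eta$ by composition and the identity from $h(h^{-1}(s))=s$. The first three properties and the monotonicity/divergence of $\eta$ are argued correctly (up to the harmless slip that $\eta(t_n)\leq T$ only gives points $s_n\leq T+\delta$ with $h(s_n)<e^{-\gamma t_n}$, which still contradicts $h(s_n)\geq h(T+\delta)>0$).

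The gap is exactly at the step you call the technical heart: continuity of $h$ at $\eta(t)$. Your proposed argument cannot work, because the inclusions go the wrong way: $K_{t-}:=\overline{\bigcup_{s<t}K_s}\subseteq K_t\subseteq K_{t+}:=\bigcap_{s>t}K_s$, so the assertion that \enquote{the infimum over $K_{t+}$ cannot be strictly smaller than the infimum over $K_{t-}$} is precisely the continuity you want to prove, and it is false for a general exhausting family. Definition~\ref{def:ex_family} puts no continuity requirement on $t\mapsto K_t$: in $\R^d$ take $K_t=\overline{B}_{\lfloor t\rfloor}(0)$ and, say, $\phi_0=\psi_0=\pi^{-d/4}e^{-|x|^2/2}$; then $h(t)=\pi^{-d/4}e^{-\lfloor t\rfloor^2/2}$ is a step function, and whenever $e^{-\gamma t}$ falls inside a jump your sandwich $h(\eta(t)+)\leq e^{-\gamma t}\leq h(\eta(t)-)$ does not collapse, so $e^{\gamma t}h(\eta(t))=1$ fails. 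Continuity (even uniform continuity on compacta) of $\phi_0,\psi_0$ controls the functions, not the sudden enlargement of the sets at a jump of the family. For comparison, the paper's own proof simply asserts that $h$ is continuous (citing the continuity of the eigenfunctions, Remark~\ref{rem:opposite}.\ref{rem:opposite-a} and \eqref{A4}) and then invokes $h(h^{-1}(s))=s$; so you have correctly isolated the delicate point, but your argument does not close it. To obtain the exact identity one needs some regularity of the family itself, e.g.\ $K_t=\overline{B}_{\rho(t)}(0)$ with $\rho$ continuous and increasing (or an interpolation of a given family), or else one must be content with one-sided bounds and adapt the way the lemma is used in Theorem~\ref{th:pGSD}.
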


\begin{proof}
    Because of the eigenequations, our assumptions \eqref{A1}--\eqref{A3} and $\cut=\emptyset$, the ground state eigenfunctions $\phi_0$ and $\psi_0$ are continuous and strictly positive on $M$. Together with the inequality in Remark~\ref{rem:opposite}.\ref{rem:opposite-a} and the assumption \eqref{A4} this implies that the function $h$ is continuous, strictly positive and decreasing on $[t_0,\infty)$ such that $h(t) \to 0$ as $t \uparrow \infty$. In particular, its generalized inverse function $h^{-1}$ is decreasing on $(0,h(t_0)]$, $h^{-1}(s) \to \infty$ as $s \downarrow 0$, and we have $h(h^{-1}(s)) = s$ for every $s \in (0,h(t_0)]$. Therefore, $\eta$ is increasing on $[-\gamma^{-1}\log h(t_0),\infty)$ as it is a composition of two decreasing functions $h^{-1}$ and $t \mapsto e^{-\gamma t}$, $\eta(t) \to \infty$ as $t \to \infty$, and $e^{\gamma t} h(\eta(t)) = e^{\gamma t} h(h^{-1}(e^{-\gamma t})) = e^{\gamma t} e^{-\gamma t} = 1$. This completes the proof.
\end{proof}

In the following theorem we will use the notation $\cM^1(K)$ to denote all probability measures $\sigma$ on $(M,\cB(M))$ such that $\supp\sigma\subset K$. Recall that we already know from Theorem~\ref{th:hcf_and_eqe}, that the finiteness of the heat content for large times implies that $\phi_0, \psi_0 \in L^1(M,\mu)$ and that the measures $m$, and $m^*$, defined in \eqref{eq:quasi_stat_meas} are the only quasi-stationary measures on each $L^p(M,\mu)$, $p \in [1,\infty]$, of the semigroups $\left\{U_t: t \geq 0 \right\}$, and $\left\{U^*_t: t \geq 0 \right\}$, respectively.

\begin{theorem}\label{th:pGSD}
   Let $M$ be a locally compact, but not compact, Polish space. Assume that \eqref{A0}--\eqref{A4} hold, 
    let $\left\{K_t : t \geq t_0\right\}$ be an exhausting family of compact sets in $M$, and assume that $\cut = \emptyset$. Set $\widetilde t_0 = \max\left\{0, -\gamma^{-1}\log h(t_0)\right\}$ and let $\eta$ be the function defined in \eqref{eg:eta_def}. If there is some $t_1 \geq t_0$ such that the heat content $Z(t_1) < \infty$, then the following assertions hold.
\begin{enumerate}
\item\label{th:pGSD-a}
    Both semigroups $\left\{U_t: t \geq 0 \right\}$ and $\left\{U^*_t: t \geq 0 \right\}$ are pGSD for the exhausting family of sets $\left\{K_{\eta(t)} : t \geq \widetilde t_0\right\}$.
\item\label{th:pGSD-b}
    Both semigroups $\left\{U_t: t \geq 0 \right\}$ and $\left\{U^*_t: t \geq 0 \right\}$ are exponentially progressively uniformly quasi-ergodic, i.e.\ for every $p \in [1,\infty]$, there exists some $C>0$, such that
    \begin{gather*}
        \sup_{\sigma \in \cM^1(K_{\eta(t/2)})} \sup_{\substack{f \in  L^p(M,\mu)\\ \left\|f\right\|_p \leq 1}} \left|\frac{\sigma(U_t f)}{\sigma(U_t \I_M)} - m(f)\right|
        \leq C  \, e^{-(\gamma/2) t}, \quad t >6t_1,
    \end{gather*}
    and
    \begin{gather*}
        \sup_{\sigma \in \cM^1(K_{\eta(t/2)})} \sup_{\substack{f \in  L^p(M,\mu)\\ \left\|f\right\|_p \leq 1}} \left|\frac{\sigma(U^*_t f)}{\sigma(U^*_t \I_M)} - m^*(f)\right|
        \leq C  \, e^{-(\gamma/2) t}, \quad t >6t_1.
    \end{gather*}
\end{enumerate}
\end{theorem}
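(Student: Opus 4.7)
The plan is to first establish the pGSD property of part~\ref{th:pGSD-a} and then deduce the progressive exponential quasi-ergodicity in part~\ref{th:pGSD-b} by feeding pGSD back into Theorem~\ref{th:hcf_and_eqe}.\ref{th:hcf_and_eqe-2}.

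For part~\ref{th:pGSD-a}, I would apply Lemma~\ref{lem:unif_conv_impr} with $s=0$ and $r=t_1$. Integrating the resulting pointwise estimate in $y$ and using $Z(t_1)<\infty$ yields
\begin{gather*}
    e^{\lambda_0 t} U_t\I_M(x) \leq \frac{\|\psi_0\|_1}{\Lambda}\,\phi_0(x) + C\, e^{(\lambda_0+\gamma) t_1}\, Z(t_1)\, e^{-\gamma t},
\end{gather*}
valid for every $x\in M$ and $t>t_1+2t_0$. By Lemma~\ref{lem:eta_def}, for $x\in K_{\eta(t)}$ one has $\phi_0(x)\geq h(\eta(t)) = e^{-\gamma t}$, so the residual term is bounded by a constant multiple of $\phi_0(x)$ and can be absorbed into the first summand. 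This gives the pGSD bound $\I_{K_{\eta(t)}}(x)\, U_t\I_M(x) \leq C'' e^{-\lambda_0 t}\phi_0(x)$ along the exhausting family $\{K_{\eta(t)}\}$. The corresponding statement for $\{U_t^*: t\geq 0\}$ is obtained by applying Lemma~\ref{lem:unif_conv_impr} with $s=t_1$, $r=0$ and integrating in $x$, with $\psi_0$ replacing $\phi_0$ throughout.

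For part~\ref{th:pGSD-b}, I would invoke Theorem~\ref{th:hcf_and_eqe}.\ref{th:hcf_and_eqe-2} with the choice $s=t/2$, which is admissible whenever $t>6t_1$; the hypothesis $\supp\sigma\cap(M\setminus\cut)\neq\emptyset$ is automatic because $\cut=\emptyset$. This produces
\begin{gather*}
    \left|\frac{\sigma(U_t f)}{\sigma(U_t\I_M)} - m(f)\right|
    \leq C\, e^{-\gamma t/2}\,\frac{e^{\lambda_0 t/2} \sigma(U_{t/2}\I_M)}{\sigma(\phi_0)}\,\|f\|_p
\end{gather*}
for every $f\in L^p(M,\mu)$ and every finite $\sigma$. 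Now, for $\sigma\in\cM^1(K_{\eta(t/2)})$, integrating the pGSD bound from part~\ref{th:pGSD-a} against $\sigma$ gives $\sigma(U_{t/2}\I_M)\leq C'' e^{-\lambda_0 t/2}\sigma(\phi_0)$, so the multiplicative prefactor becomes uniformly bounded in $\sigma$, and one extracts the advertised rate $e^{-(\gamma/2) t}$. The adjoint version is obtained by the symmetric argument with $\psi_0$ in place of $\phi_0$.

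The main obstacle I foresee is the tight interplay between the definition of $\eta$ and the rate $\gamma$ in Lemma~\ref{lem:unif_conv_impr}: the absorption step at the end of part~\ref{th:pGSD-a} succeeds only because $\eta$ was engineered in Lemma~\ref{lem:eta_def} precisely so that $e^{-\gamma t}$ is dominated by $\phi_0$ and $\psi_0$ on $K_{\eta(t)}$. A secondary bookkeeping point is matching the admissible time ranges: one needs $t>t_1+2t_0$ for Lemma~\ref{lem:unif_conv_impr}, $t\geq\widetilde t_0$ for $\eta$ to be defined, and $t>6t_1$ for Theorem~\ref{th:hcf_and_eqe}.\ref{th:hcf_and_eqe-2} with $s=t/2$; all three are simultaneously satisfied for $t$ sufficiently large, which is exactly what pGSD and part~\ref{th:pGSD-b} require.
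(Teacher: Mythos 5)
Your argument is correct and follows essentially the same route as the paper: part \ref{th:pGSD-a} via Lemma~\ref{lem:unif_conv_impr} with $s=0$, $r=t_1$, integration in $y$ using $Z(t_1)<\infty$, and absorption of the $e^{-\gamma t}$ remainder through Lemma~\ref{lem:eta_def}'s identity $h(\eta(t))=e^{-\gamma t}$; part \ref{th:pGSD-b} by plugging $s=t/2$ into Theorem~\ref{th:hcf_and_eqe}.\ref{th:hcf_and_eqe-2} and using the freshly proved pGSD bound to control $e^{\lambda_0 t/2}\sigma(U_{t/2}\I_M)/\sigma(\phi_0)$ uniformly over $\sigma\in\cM^1(K_{\eta(t/2)})$. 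The only (harmless) omission is the explicit remark that $Z(t_1)<\infty$ forces $\phi_0,\psi_0\in L^1(M,\mu)$, which your use of $\|\psi_0\|_1$ tacitly requires.
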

\begin{proof}
    Recall that $\psi_0(x) \leq \left\|\psi_0\right\|_{\infty} e^{\lambda_0 t} U^*_t\I_M(x)$ and $\phi_0(x) \leq \left\|\phi_0\right\|_{\infty}e^{\lambda_0 t}  U_t\I_M(x)$. Since $Z(t_1) < \infty$, this yields $\phi_0, \psi_0 \in L^1(M,\mu)$. Therefore, Lemma~\ref{lem:unif_conv_impr} (with $s = 0$ and $r=t_1$) and Lemma~\ref{lem:eta_def} give
\begin{align*}
    \left|e^{\lambda_0 t} U_t\I_M(x) - (1/\Lambda) \phi_0(x) \left\|\psi_0\right\|_1 \right|
    &\leq \int_M\left|e^{\lambda_0 t} u_t(x,y) - (1/\Lambda) \phi_0(x) \psi_0(y) \right|\mu(dy) \\
    &\leq C e^{-\gamma (t-t_1)} Z(t_1) \\
	&\leq C e^{\gamma t_1}Z(t_1) \frac{\phi_0(x)}{e^{\gamma t} \inf_{x \in K_{\eta(t)}} \phi_0(x)} \\
    &\leq c_1 \phi_0(x),
\end{align*}
for every $x \in K_{\eta(t)}$ and sufficiently large $t$; since the function $\eta$ is increasing and $\eta(t) \to \infty$ as $t \to \infty$, the family $\left\{K_{\eta(t)} : t \geq \widetilde t_0\right\}$ is indeed exhausting. This gives
\begin{gather*}
    U_t\I_M(x) \leq (c_1+(1/\Lambda) \left\|\psi_0\right\|_1) e^{-\lambda_0 t} \phi_0(x), \quad x \in K_{\eta(t)},
\end{gather*}
for sufficiently large $t$, showing that the semigroup $\left\{U_t: t \geq 0 \right\}$ is pGSD. The pGSD property for the semigroup $\left\{U^*_t: t \geq 0 \right\}$ follows with exactly the same ergument. This completes the proof of~\ref{th:pGSD-a}.

Part~\ref{th:pGSD-b} follows from Theorem~\ref{th:hcf_and_eqe}.\ref{th:hcf_and_eqe-2} with $s = t/2$ and probability measures $\sigma$ supported in $K_{\eta(t/2)}$, by an application of the pGSD property which we have already established in Part~\ref{th:pGSD-a}.
\end{proof}

\section{Progressive ground state domination and quasi-ergodicity} \label{sec:pGSD}

As we have already noticed in the previous section, in a non-compact space $M$ aGSD is stronger than the finiteness of heat content for large times. However, the finiteness of the heat content for large times implies pGSD with a certain exhausting family of sets. Recall that these properties -- aGSD, pGSD, finite heat content -- automatically hold in compact spaces. Consequently, we also have uniform quasi-ergodicity at an exponential time-rate, see Remark~\ref{rem:opposite}.\ref{rem:opposite-d}, Corollary~\ref{cor:agsd-euqe}, and the comments following the corollary.

On the other hand, see \cite{bib:KSchi20}, pGSD is a general regularity property for compact non-aGSD semigroups, which can be studied on its own; in particular, pGSD is still true for a wide range of compact semigroups having infinite heat content for large times. In the present section we follow this path, introducing the concept of progressive uniform quasi-ergodicity and studying its relation to pGSD.

Let $a,b \in (0,1)$ be arbitrary numbers such that $a+2b=1$. We show first that in a noncompact space $M$ and under \eqref{A4}, pGSD for the exhausting family $\left\{K_t : t \geq t_0\right\}$ implies progressive uniform quasi-ergodicity  for the exhausting family $\left\{K_{at} : t \geq t_0/a\right\}$; the time-rate is given by
\begin{gather} \label{eq:kappa-rate}
    \kappa_b(t):= e^{-\gamma b t} + \sup_{x \in K_{bt}^c} U_{t_0}\I_M(x) + \sup_{x \in K_{bt}^c} U^*_{t_0}\I_M(x).
\end{gather}
Note that  $\kappa_b(t) \downarrow 0$ as $t \uparrow \infty$. Quite often, see the examples, $\kappa_b$ decays exponentially at infinity.

We have to make sure that $\phi_0, \psi_0 \in L^1(M,\mu)$, which means that both measures $m$ and $m^*$ in \eqref{eq:quasi_stat_meas} are well-defined. If the space $M$ is compact, this follows from $\mu(M) < \infty$ and $\phi_0, \psi_0 \in L^\infty(M,\mu)$ due to \eqref{A3}. For locally compact (but non-compact) Polish spaces $M$, this is still true under aGSD, see Remark~\ref{rem:opposite}.\ref{rem:opposite-b}. Now we show that $\phi_0, \psi_0 \in L^1(M,\mu)$ holds if both semigroups $\left\{U_t: t \geq 0 \right\}$ and $\left\{U^*_t: t \geq 0 \right\}$ enjoy the pGSD property.

\begin{lemma}\label{lem:pGSD_L1}
    Let $M$ be a locally compact, but not compact, Polish space and assume \eqref{A0}--\eqref{A3}. If the semigroups $\left\{U_t: t \geq 0 \right\}$ and $\left\{U^*_t: t \geq 0 \right\}$ are pGSD, then $\phi_0, \psi_0 \in L^1(M,\mu)$.
\end{lemma}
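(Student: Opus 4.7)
My plan is to prove $\phi_0 \in L^1(M,\mu)$, from which $\psi_0 \in L^1(M,\mu)$ follows by a fully symmetric argument. The key tool is Lemma~\ref{lem:unif_conv_impr} applied with $r = 0$ and $s$ large enough that both the Lemma and the pGSD assumption for $\{U_t: t \geq 0\}$ are in force (with $t > s + 2 t_0$). It yields the $\mu\otimes\mu$-a.e.\ bound
\[
    \bigl|e^{\lambda_0 t} u_t(x,y) - \tfrac{1}{\Lambda}\phi_0(x)\psi_0(y)\bigr| \leq C\, e^{-\gamma(t-s)}\, e^{\lambda_0 s} U_s \I_M(x).
\]
Letting $\{K_s\}$ denote the exhausting family from pGSD for $\{U_t\}$, for $x \in K_s$ we have $e^{\lambda_0 s} U_s \I_M(x) \leq C\phi_0(x)$, and extracting the lower estimate yields
\[
    e^{\lambda_0 t} u_t(x,y) \geq \phi_0(x)\Bigl[\tfrac{1}{\Lambda}\psi_0(y) - C' e^{-\gamma(t-s)}\Bigr], \quad \text{a.e.\ }x \in K_s,\ \text{a.e.\ }y \in M.
\]

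Integrating in $x$ over $K_s$ and using the trivial bound $\int_{K_s} u_t(x,y)\,\mu(dx) \leq U_t^* \I_M(y)$ gives, for a.e.\ $y$,
\[
    e^{\lambda_0 t} U_t^* \I_M(y) \geq \Bigl[\tfrac{1}{\Lambda}\psi_0(y) - C' e^{-\gamma(t-s)}\Bigr] \int_{K_s} \phi_0\,d\mu.
\]
Since $\psi_0$ is continuous and strictly positive on $M \setminus \cut$ (a set of full measure), I fix a compact set $B \subset M \setminus \cut$ with $\mu(B) > 0$ on which $\psi_0 \geq \epsilon$, pick $T > 2 t_0$ large enough that $C' e^{-\gamma T} \leq \epsilon/(2\Lambda)$, and set $t := s + T$; then the bracket exceeds $\epsilon/(2\Lambda)$ throughout $B$. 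Integrating in $y$ over $B$ gives
\[
    \tfrac{\epsilon \mu(B)}{2\Lambda}\int_{K_s} \phi_0\,d\mu \leq e^{\lambda_0 t}\int_B U_t^* \I_M\,d\mu.
\]

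To bound the right-hand side uniformly in $s$, I invoke the pGSD assumption for $\{U_t^*: t \geq 0\}$ with corresponding exhausting family $\{K_t^*\}$: since $B$ is a fixed compact set, there is some threshold beyond which $B \subset K_{s+T}^*$, whence $U_t^* \I_M \leq C e^{-\lambda_0 t}\psi_0$ on $B$ and therefore $e^{\lambda_0 t}\int_B U_t^* \I_M\,d\mu \leq C\|\psi_0\|_\infty \mu(B)$. Combining the two displays yields $\int_{K_s}\phi_0\,d\mu \leq 2\Lambda C\|\psi_0\|_\infty/\epsilon$, uniformly in $s$, and monotone convergence along $K_s \uparrow M$ gives $\phi_0 \in L^1(M,\mu)$. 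The proof that $\psi_0 \in L^1(M,\mu)$ is entirely analogous: use Lemma~\ref{lem:unif_conv_impr} in its $s = 0$, $r > 0$ form, apply pGSD for $\{U_t^*\}$ to bound $e^{\lambda_0 r} U_r^* \I_M(y) \leq C\psi_0(y)$ on $K_r^*$, integrate in $y$, and close with pGSD for $\{U_t\}$ on a compact set where $\phi_0$ is bounded below. The main technical point I foresee is keeping the compact set $B$ fixed as $s \to \infty$; this works precisely because the gap $T = t-s$ can be chosen once and for all so that the error term remains controlled while $K_s$ exhausts $M$.
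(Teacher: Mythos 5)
Your proof is correct and follows essentially the same strategy as the paper's own argument. Both proofs apply Lemma~\ref{lem:unif_conv_impr} with $r=0$ and a sliding $s$, invoke pGSD of $\{U_t\}$ to replace $e^{\lambda_0 s}U_s\I_M(x)$ by $\phi_0(x)$ for $x\in K_s$, extract the lower bound $e^{\lambda_0 t}u_t(x,y)\geq \phi_0(x)\bigl[(1/\Lambda)\psi_0(y)-C'e^{-\gamma(t-s)}\bigr]$, integrate in $x$ over $K_s$ to compare against $e^{\lambda_0 t}U_t^*\I_M(y)$, then close the estimate using pGSD of $\{U_t^*\}$ and finish by monotone convergence along $K_s\uparrow M$. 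Your organization differs in two harmless ways from the paper: you test against a fixed compact set $B$ of positive measure on which $\psi_0$ is bounded below (the paper picks a single $y\notin\cut$ and divides), and you couple $t$ and $s$ by setting $t=s+T$ with a fixed gap $T$ rather than first sending $t\to\infty$ with $s$ frozen and then $s\to\infty$. Both variants are valid; yours makes the uniform-in-$s$ bound slightly more explicit. The only small point to make explicit is the existence of the compact $B\subset M\setminus\cut$ with $\mu(B)>0$ and $\psi_0\geq\epsilon$ on $B$; this follows since $\psi_0$ is continuous and strictly positive on the dense set $M\setminus\cut$, $M$ is locally compact, and $\mu$ has full support.
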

\begin{proof}
    Suppose that pGSD holds for the exhausting family of compact sets $\left\{K_t : t \geq t_0\right\}$. We only show that $\phi_0 \in L^1(M,\mu)$. The proof for $\psi_0$ is similar. By Lemma~\ref{lem:unif_conv_impr} (with $t_0 < s < t$ such that $t-s > 2t_0$ and $r=0$) and pGSD of $\left\{U_t: t \geq 0 \right\}$, we have for $\mu$-almost all $x \in K_s$ and $y \in K_s \setminus \cut$
\begin{align*}
    e^{\lambda_0 t} u_t(x,y)
        &\geq (1/\Lambda) \phi_0(x) \psi_0(y)-C e^{-\gamma (t-s)} U_s \I_M(x) \\
        &\geq \left((1/\Lambda) \psi_0(y)-c_1 e^{-\gamma (t-s) - \lambda_0 s} \right) \phi_0(x).
\end{align*}
We integrate on both sides of the inequality in $x \in K_s$ and use pGSD of $\left\{U^*_t: t \geq 0 \right\}$ to get
\begin{gather*}
    \int_{K_s} \phi_0(x)\,\mu(dx)
    \leq \frac{e^{\lambda_0 t} U^*_t \I_M(y)}{(1/\Lambda) \psi_0(y)-c_1 e^{-\gamma (t-s) - \lambda_0 s}}
    \leq \frac{c_2 \psi_0(y)}{(1/\Lambda) \psi_0(y)-c_1 e^{-\gamma (t-s) - \lambda_0 s}},
\end{gather*}
for sufficiently large $t$. Letting $t \to \infty$ we obtain
\begin{gather*}
    \int_{K_s} \phi_0(x) \,\mu(dx)
    \leq c_2 \Lambda\quad\text{for all\ \ } s > t_0,
\end{gather*}
and monotone convergence finally proves $\phi_0 \in L^1(M,\mu)$.
\end{proof}

\begin{theorem} \label{th:pGSD-to-puqe}
    Let $M$ be a locally compact, but not compact, Polish space. Assume that \eqref{A0}--\eqref{A4} hold 
    and that both semigroups $\left\{U_t: t \geq 0 \right\}$ and $\left\{U^*_t: t \geq 0 \right\}$ are pGSD for the exhausting family of compact sets $\left\{K_t : t \geq t_0\right\}$. Then we have $\phi_0, \psi_0 \in L^1(M,\mu)$ and, for every $a, b \in (0,1)$ such that $a+2b=1$, both semigroups $\left\{U_t: t \geq 0 \right\}$ and $\left\{U^*_t: t \geq 0 \right\}$ are progressively uniformly quasi-ergodic for the exhausting family $\left\{K_{at} : t \geq t_0/a\right\}$ with time-rate $\kappa_b(t)$ given by \eqref{eq:kappa-rate}, i.e.\ for every $p \in [1,\infty]$ there exists some $C>0$ such that for all $t>(4/(a \wedge b)) t_0$
    \begin{gather*}
            \sup_{\sigma \in \cM^1(K_{a t}) \atop \supp \sigma \cap (M \setminus \cut) \neq \emptyset} \sup_{f \in  L^p(M,\mu) \atop \left\|f\right\|_p \leq 1} \left|\frac{\sigma(U_t f)}{\sigma(U_t \I_M)} - m(f)\right|
        \leq C  \kappa_b(t),
    \intertext{and}
        \sup_{\sigma \in \cM^1(K_{at}) \atop \supp \sigma \cap (M \setminus \cut) \neq \emptyset} \sup_{f \in  L^p(M,\mu) \atop \left\|f\right\|_p \leq 1} \left|\frac{\sigma(U^*_t f)}{\sigma(U^*_t \I_M)} - m^{*}(f)\right|
        \leq C  \kappa_b(t).
    \end{gather*}
\end{theorem}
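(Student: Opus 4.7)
The strategy mirrors the proof of Theorem~\ref{th:hcf_and_eqe}.\ref{th:hcf_and_eqe-2}, replacing the finite-heat-content hypothesis by pGSD, now applied on two distinct compact sets: $K_{at}$ controls the starting measure $\sigma$ and $K_{bt}$ controls the endpoint variable $y$. I treat only $\{U_t: t\geq 0\}$; the adjoint case is entirely symmetric. By Lemma~\ref{lem:pGSD_L1}, $\phi_0,\psi_0\in L^1(M,\mu)$, so $m$ and $m^*$ are well-defined probability measures.

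Following the algebraic decomposition of Theorem~\ref{th:hcf_and_eqe}.\ref{th:hcf_and_eqe-2}, write
\begin{gather*}
    \frac{\sigma(U_t f)}{\sigma(U_t\I_M)} - m(f) = \frac{A_1(f) - m(f)\,A_2}{e^{\lambda_0 t}\sigma(U_t\I_M)},
\end{gather*}
with $A_1(f) := e^{\lambda_0 t}\sigma(U_tf) - \Lambda^{-1}\sigma(\phi_0)\int f\psi_0\,d\mu$ and $A_2 := A_1(\I_M)$. Since $\supp\sigma\subseteq K_{at}\subseteq K_t$, pGSD yields $\sigma(U_t\I_M)\leq C e^{-\lambda_0 t}\sigma(\phi_0)$, while the eigenequation gives the reverse inequality $\sigma(\phi_0)\leq \|\phi_0\|_\infty\, e^{\lambda_0 t}\sigma(U_t\I_M)$; hence $e^{\lambda_0 t}\sigma(U_t\I_M)\asymp \sigma(\phi_0)$ uniformly in $\sigma$ and $t$, and the denominator can be absorbed into a multiplicative constant. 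The task thus reduces to proving $|A_1(f)|\leq C\sigma(\phi_0)\kappa_b(t)\|f\|_p$ (and the corresponding bound on $|A_2|=|A_1(\I_M)|$).

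Split the $y$-integration into $K_{bt}$ and $K_{bt}^c$. On $K_{bt}$, apply Lemma~\ref{lem:unif_conv_impr} with $s=at$ and $r=bt$, which is legitimate for $t>4t_0/(a\wedge b)$ as then $s,r\geq t_0$ and $t-s-r=bt>2t_0$. Using pGSD of $\{U_t\}$ on $K_{at}$ to bound $e^{\lambda_0 at}U_{at}\I_M(x)\leq C\phi_0(x)$ and pGSD of $\{U_t^*\}$ on $K_{bt}$ to bound $e^{\lambda_0 bt}U_{bt}^*\I_M(y)\leq C\psi_0(y)$, one obtains the pointwise estimate
\begin{gather*}
    \bigl|e^{\lambda_0 t}u_t(x,y) - \Lambda^{-1}\phi_0(x)\psi_0(y)\bigr|\leq Ce^{-\gamma bt}\phi_0(x)\psi_0(y),\quad x\in K_{at},\ y\in K_{bt}.
\end{gather*}
Integrating against $\sigma(dx)|f(y)|\,d\mu(y)$ and applying H\"older (with $\|\psi_0\|_q<\infty$, which holds since $\psi_0\in L^1\cap L^\infty$) delivers a $K_{bt}$-contribution $\leq Ce^{-\gamma bt}\sigma(\phi_0)\|f\|_p$, matching the first summand of $\kappa_b(t)$.

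On $K_{bt}^c$, invoke Lemma~\ref{lem:unif_conv_impr} once more with $s=at$, $r=t_0$ and pGSD of $\{U_t\}$ on $K_{at}$ to obtain $|e^{\lambda_0 t}u_t(x,y) - \Lambda^{-1}\phi_0(x)\psi_0(y)|\leq Ce^{-2\gamma bt}\phi_0(x)\,U_{t_0}^*\I_M(y)$. Combined with the eigenequation estimate $\psi_0(y)\leq e^{\lambda_0 t_0}\|\psi_0\|_\infty U_{t_0}^*\I_M(y)$, both terms of $A_1(f)$ acquire a $y$-factor pointwise dominated on $K_{bt}^c$ by $\sup_{K_{bt}^c}U_{t_0}^*\I_M$; this is the second summand of $\kappa_b(t)$. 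The third summand $\sup_{K_{bt}^c}U_{t_0}\I_M$ appears by the symmetric argument when handling the statement for $\{U_t^*\}$. The principal technical obstacle is to integrate these pointwise bounds over $K_{bt}^c$ (which may have infinite $\mu$-measure) uniformly in $p\in[1,\infty]$: one must simultaneously exploit the pointwise smallness $\psi_0\I_{K_{bt}^c}\leq C\sup_{K_{bt}^c}U_{t_0}^*\I_M$ and the $L^1$-integrability of $\psi_0$ furnished by Lemma~\ref{lem:pGSD_L1}, e.g.\ via the interpolation $\psi_0^q\,\I_{K_{bt}^c}\leq (\sup_{K_{bt}^c}\psi_0)^{q-1}\,\psi_0$, which converts the $L^q$-norm of $\psi_0\,\I_{K_{bt}^c}$ into the required sup factor at the price of a $p$-dependent multiplicative constant.
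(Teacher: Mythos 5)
Your argument on $K_{bt}$ is essentially the one the paper uses and is fine. The serious problem is on $K_{bt}^c$, and you have correctly identified the danger spot but not the cure. Your decomposition keeps $|f(y)|$ inside the $y$-integral over $K_{bt}^c$: after Lemma~\ref{lem:unif_conv_impr} (with $s=at$, $r=t_0$) and pGSD in the $x$-variable, you are left with
\[
C\,e^{-2\gamma bt}\,\phi_0(x)\int_{K_{bt}^c} U_{t_0}^*\I_M(y)\,|f(y)|\,\mu(dy).
\]
For $p=\infty$ this needs $\int_{K_{bt}^c}U_{t_0}^*\I_M\,d\mu<\infty$, i.e.\ $Z(t_0)<\infty$, which is \emph{not} a hypothesis here -- pGSD is strictly weaker, and one of the main points of this theorem is to cover the infinite heat content case. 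For $1<p<\infty$ your interpolation device, applied to $U_{t_0}^*\I_M$ (the function actually produced by Lemma~\ref{lem:unif_conv_impr}; the analogous inequality for $\psi_0$ does not help), reads $\|U_{t_0}^*\I_M\I_{K_{bt}^c}\|_q^q\leq(\sup_{K_{bt}^c}U_{t_0}^*\I_M)^{q-1}\,Z(t_0)$, which again requires $Z(t_0)<\infty$. There is also no pGSD-type bound replacing $U_{t_0}^*\I_M$ by $\psi_0$ on $K_{bt}^c$, since pGSD only controls $U_t^*\I_M$ \emph{inside} $K_t$. So the $K_{bt}^c$ contribution is not bounded by $\kappa_b(t)\|f\|_p$ by your route, and the exponential factor $e^{-2\gamma bt}$ does not rescue a possibly infinite integral.

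The ingredient you are missing is an extra Chapman--Kolmogorov step at the endpoint \emph{before} splitting the domain. The paper writes, using $u_t = u_{t-2t_0}*u_{2t_0}$ and the eigenequation,
\[
A_1(f) = \int_M\Bigl(e^{\lambda_0(t-2t_0)}u_{t-2t_0}(x,w)-\tfrac{1}{\Lambda}\phi_0(x)\psi_0(w)\Bigr)\,e^{2\lambda_0 t_0}U_{2t_0}f(w)\,\mu(dw),
\]
and then uses the ultracontractive bound $|U_{2t_0}f(w)|\leq\|U_{t_0}\|_{p,\infty}\|f\|_p\,U_{t_0}\I_M(w)$. This replaces the arbitrary $L^p$-datum $f$ by the \emph{fixed} spatial profile $U_{t_0}\I_M(w)$, and the $p$-dependence collapses into a single operator-norm constant. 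After that substitution, splitting the $w$-integral at $K_{bt}$ works cleanly: on $K_{bt}^c$ one pulls out $\sup_{K_{bt}^c}U_{t_0}\I_M$ and bounds the remaining integrand by
\[
I_2(t,x)=\int_{K_{bt}^c}\bigl|e^{\lambda_0(t-2t_0)}u_{t-2t_0}(x,w)-\tfrac{1}{\Lambda}\phi_0(x)\psi_0(w)\bigr|\,\mu(dw)\leq e^{\lambda_0(t-2t_0)}U_{t-2t_0}\I_M(x)+\tfrac{1}{\Lambda}\phi_0(x)\|\psi_0\|_1,
\]
which is $\leq C\phi_0(x)$ for $x\in K_{at}\subset K_{t-2t_0}$ by pGSD and Lemma~\ref{lem:pGSD_L1}, with \emph{no} appeal to finite heat content. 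This is precisely the step your proposal needs; without it the $K_{bt}^c$ estimate does not close.
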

\begin{proof}
    In view of Lemma~\ref{lem:pGSD_L1} we have $\phi_0, \psi_0 \in L^1(M,\mu)$. We proceed as in the proof of Theorem~\ref{th:hcf_and_eqe}. Since the proof for the dual semigroup is similar, we restrict our attention to $\left\{U_t: t \geq 0 \right\}$. Fix $p \in [1,\infty]$. For every $f \in L^p(M,\mu)$, all $t>(4/(a \wedge b)) t_0$ and $\sigma \in \cM^1(K_{at})$ such that $\supp \sigma \cap (M \setminus \cut) \neq \emptyset$ we have
    \begin{gather}
    \label{eq:starting_est}
    \begin{aligned}
    \frac{\sigma(U_t f)}{\sigma(U_t \I_M)} - m(f)
    &= \frac{ e^{\lambda_0 t}\sigma(U_t f) - m(f)e^{\lambda_0 t}\sigma(U_t \I_M)}{e^{\lambda_0 t}\sigma(U_t \I_M)}\\
    &= \frac{ e^{\lambda_0 t}\sigma(U_t f) - (1/\Lambda) \, \sigma(\phi_0) \int_M f(y) \psi_0(y) \,\mu(dy)}{e^{\lambda_0 t}\sigma(U_t \I_M)} \\
    &\qquad\mbox{} + m(f) \, \frac{(1/\Lambda) \, \sigma(\phi_0) \int_M\psi_0(y) \,\mu(dy) - e^{\lambda_0 t}\sigma(U_t \I_M)}{e^{\lambda_0 t}\sigma(U_t \I_M)} .
    \end{aligned}
    \end{gather}
as well as
    \begin{gather}\label{eq:pgsd_1}
    \begin{aligned}
    &e^{\lambda_0 t}U_t f(x) - (1/\Lambda) \, \phi_0(x) \int_M f(y) \psi_0(y) \,\mu(dy) \\
    &\quad= \int_M \left(e^{\lambda_0 t}u_t(x,y) - (1/\Lambda) \, \phi_0(x) \psi_0(y)\right) f(y) \,\mu(dy),
    \end{aligned}
    \intertext{and}\label{eq:pgsd_2}
    \begin{aligned}
    &e^{\lambda_0 t}U_t \I_M(x) - (1/\Lambda) \, \phi_0(x) \int_M \psi_0(y) \,\mu(dy) \\
    &\quad= \int_M \left(e^{\lambda_0 t}u_t(x,y) - (1/\Lambda) \, \phi_0(x) \psi_0(y)\right) \mu(dy),
    \end{aligned}
    \end{gather}
We first estimate \eqref{eq:pgsd_1}. With the Chapman--Kolmogorov equation and the eigenequation, we get
\begin{align*}
    &e^{\lambda_0 t}U_t f(x) - (1/\Lambda) \, \phi_0(x) \int_M f(y) \psi_0(y) \,\mu(dy) \\
    &\quad = \int_M \left[\left(e^{\lambda_0 (t-2t_0)}u_{t-2t_0}(x,w) - (1/\Lambda) \, \phi_0(x) \psi_0(w)\right) e^{2\lambda_0 t_0} \int_M u_{2t_0}(w,y)f(y) \,\mu(dy)\right]\mu(dw) \\
	&\quad= \int_{K_{bt}} \Big( \dots \Big) e^{2\lambda_0 t_0} U_{2t_0} f(w) \,\mu(dw)
		                + \int_{K_{bt}^c} \Big( \dots \Big) e^{2\lambda_0 t_0} U_{2t_0} f(w)\,\mu(dw) .
\end{align*}
Since
\begin{gather*}
    \left|U_{2t_0} f(x) \right| \leq \left\|U_{t_0} \right\|_{p,\infty} \left\|f\right\|_p U_{t_0} \I_M(x), \quad x \in M,
\end{gather*}
we obtain
\begin{align*} 
    &\left|e^{\lambda_0 t}U_t f(x) -  (1/\Lambda) \, \phi_0(x) \int_M f(y) \psi_0(y) \,\mu(dy) \right|  \\
    &\quad\leq  e^{2\lambda_0 t_0} \left\|U_{t_0} \right\|_{p,\infty} \left\|f\right\|_p \left(\left\|U_{t_0} \I_M\right\|_{\infty} I_1(t,x) + \sup_{w \in K_{bt}^c} U_{t_0} \I_M(w) I_2(t,x) \right),
\end{align*}
where
\begin{gather*}
    I_1(t,x):= \int_{K_{bt}} \left|e^{\lambda_0 (t-2t_0)}u_{t-2t_0}(x,w) - (1/\Lambda) \, \phi_0(x) \psi_0(w)\right| \mu(dw),
\intertext{and}
    I_2(t,x):= \int_{K_{bt}^c} \left|e^{\lambda_0 (t-2t_0)}u_{t-2t_0}(x,w) - (1/\Lambda) \, \phi_0(x) \psi_0(w)\right| \mu(dw).
\end{gather*}
By Lemma~\ref{lem:unif_conv_impr} (with $s = at$ and $r=bt$; in particular $t-2t_0-s-r= bt - 2t_0 > 2t_0$) and pGSD, we see that for all $x \in K_{at}$
\begin{gather*}
    I_1(t,x)
    \leq e^{-\gamma (bt-2t_0)} e^{\lambda_0 a t} U_{at}\I_M(x) e^{\lambda_0 b t} \int_{K_{bt}} U^*_{bt}\I_M(w) \,\mu(dw)
    \leq 
    c_1(t_0) e^{-\gamma bt} \phi_0(x) \left\|\psi_0\right\|_1.
\end{gather*}
Again by pGSD, we get for $x \in K_{at} \subset K_{t-2t_0}$
\begin{gather*}
    I_2(t,x)
    \leq e^{\lambda_0 (t-2t_0)} U_{t-2t_0}\I_M(x) + (1/\Lambda) \, \phi_0(x) \left\|\psi_0\right\|_1
    \leq (c_2 + (1/\Lambda) \left\|\psi_0\right\|_1) \phi_0(x).
\end{gather*}
Together this gives, for any measure $\sigma \in  \cM^1(K_{at})$ with $\supp \sigma \cap (M \setminus \cut) \neq \emptyset$, the estimate
\begin{align*}
    \left|e^{\lambda_0 t}\sigma(U_t f) - (1/\Lambda) \, \sigma(\phi_0) \int_M f(y) \psi_0(y) \,\mu(dy) \right|
    \leq c_3 \left(e^{-\gamma b t} + \sup_{w \in K_{bt}^c} U_{t_0} \I_M(w)\right) \left\|f\right\|_p \sigma(\phi_0).
\end{align*}
The estimate for \eqref{eq:pgsd_2} follows if we apply the above estimate for $p=\infty$ to $f = \I_M \in L^{\infty}(M,\mu)$, i.e.\ we get for all $\sigma \in  \cM^1(K_{at})$, $\supp \sigma \cap (M \setminus \cut) \neq \emptyset$,
\begin{gather*}
    \left|e^{\lambda_0 t}\sigma(U_t \I_M) - (1/\Lambda) \, \sigma(\phi_0) \left\|\psi_0\right\|_1 \right|
    \leq c_4 \left(e^{-\gamma b t} + \sup_{w \in K_{bt}^c} U_{t_0} \I_M(w)\right) \sigma(\phi_0).
\end{gather*}
It is important that the constants $c_3$ and $c_4$ do not depend on $\sigma$.
Finally, using \eqref{eq:starting_est}, \eqref{eq:by_phi}, and the above estimates, we arrive at
\begin{gather*}
    \left|\frac{\sigma(U_t f)}{\sigma(U_t \I_M)} - m(f)\right|
    \leq c_4 \left(e^{-\gamma b t} + \sup_{w \in K_{bt}^c} U_{t_0} \I_M(w)\right) \left\|f\right\|_p,
\end{gather*}
which is valid for all measures $\sigma \in \cM^1(K_{at})$ such that $\supp \sigma \cap (M \setminus \cut) \neq \emptyset$. This completes the proof.
\end{proof}

\begin{remark} \label{rem:optimal_theta}
    In Theorem~\ref{th:pGSD-to-puqe} we prove that pGSD with the exhausting family $\left\{K_t : t \geq t_0\right\}$ implies progressive quasi-ergodicity with $\left\{K_{at} : t \geq t_0/a\right\}$, for an arbitrary $a \in (0,1)$. We will see in Section~\ref{subsec:HO} that this result is sharp in the sense that one cannot expect progressive quasi-ergodicity which is uniform for the original exhausting family $\{K_t : t \geq t_0\}$.
\end{remark}

Our next theorem shows that pGSD is, in fact, equivalent to a version of progressive uniform quasi-ergodicity, cf.\ Corollary~\ref{cor:agsd-euqe}.
\begin{theorem}\label{th:pGSD-pIUC}
  Let $M$ be a locally compact, but not compact, Polish space.  Assume that \eqref{A0}--\eqref{A4} hold.  
	Then the following conditions are equivalent:
  \begin{enumerate}
    \item\label{th:pGSD-pIUC-a}
        Both semigroups $\left\{U_t: t \geq 0 \right\}$ and $\left\{U^*_t: t \geq 0 \right\}$ are pGSD.

    \item\label{th:pGSD-pIUC-b}
        There exist probability measures $\rho, \rho^* \in \cM_1(M)$ such that $\supp \rho \cap (M \setminus \cut) \neq \emptyset$, $\supp \rho^* \cap (M \setminus \cut) \neq \emptyset$, and an exhausting family of sets $\left\{K_t : t \geq t_0\right\}$ in $M$ such that
        \begin{gather*}
            \sup_{x \in K_t \setminus \cut} \; \sup_{\substack{f \in L^{\infty}(M,\mu)\\ \left\|f\right\|_{\infty} \leq 1}} \left|\frac{U_t f(x)}{U_t \I_M(x)} - \rho(f)\right| \to 0
            \quad\text{as}\quad t \to \infty,
        \intertext{and}
            \sup_{y \in K_t \setminus \cut} \; \sup_{\substack{f \in L^{\infty}(M,\mu)\\ \left\|f\right\|_{\infty} \leq 1}} \left|\frac{U^*_t f(y)}{U^*_t \I_M(y)} - \rho^*(f)\right| \to 0
            \quad\text{as}\quad t \to \infty.
        \end{gather*}
    \end{enumerate}
\end{theorem}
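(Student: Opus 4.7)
The plan is to prove the two implications separately. The forward direction \ref{th:pGSD-pIUC-a} $\Rightarrow$ \ref{th:pGSD-pIUC-b} will be an immediate specialization of Theorem~\ref{th:pGSD-to-puqe} to Dirac initial measures, while the reverse direction \ref{th:pGSD-pIUC-b} $\Rightarrow$ \ref{th:pGSD-pIUC-a} will rely on probing the uniform convergence hypothesis with the ground state eigenfunctions $\phi_0$ and $\psi_0$, converting the resulting limiting ratio into the pGSD inequality via the eigenequation \eqref{eq:eeqs}.

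For \ref{th:pGSD-pIUC-a} $\Rightarrow$ \ref{th:pGSD-pIUC-b}: I would fix $a,b \in (0,1)$ with $a+2b=1$ and apply Theorem~\ref{th:pGSD-to-puqe} with $\sigma = \delta_x$ for each $x \in K_{at} \setminus \cut$. This yields the uniform estimate
$$\sup_{\|f\|_{\infty} \leq 1}\left|\frac{U_t f(x)}{U_t \I_M(x)} - m(f)\right| \leq C\,\kappa_b(t),$$
and its adjoint analogue, so that taking $\rho := m$, $\rho^* := m^*$, and the exhausting family $\tilde K_t := K_{at}$ verifies \ref{th:pGSD-pIUC-b} (since $\kappa_b(t) \to 0$). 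Here $m, m^*$ are well-defined probability measures by Lemma~\ref{lem:pGSD_L1}, and $\supp m \cap (M\setminus\cut) \neq \emptyset$ because $m$ has density $\psi_0/\|\psi_0\|_1$ with respect to $\mu$, the interior of $\cut$ is empty by Remark~\ref{rem:opposite}.\ref{rem:opposite-x}, and $\mu$ has full support.

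For \ref{th:pGSD-pIUC-b} $\Rightarrow$ \ref{th:pGSD-pIUC-a}: I would substitute $f := \phi_0/\|\phi_0\|_{\infty} \in L^{\infty}(M,\mu)$ (which has $\|f\|_{\infty} \leq 1$) into the assumption. The eigenequation $U_t \phi_0 = e^{-\lambda_0 t}\phi_0$ turns the ratio into
$$\frac{U_t f(x)}{U_t \I_M(x)} = \frac{e^{-\lambda_0 t}\phi_0(x)}{\|\phi_0\|_{\infty}\,U_t\I_M(x)},$$
which by hypothesis converges uniformly to $\rho(\phi_0)/\|\phi_0\|_{\infty}$ for $x \in K_t \setminus \cut$. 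Provided $\rho(\phi_0) > 0$, for $t \geq t_1$ large enough the uniform lower bound rearranges immediately to
$$U_t\I_M(x) \leq \frac{2}{\rho(\phi_0)}\,e^{-\lambda_0 t}\phi_0(x), \quad x \in K_t \setminus \cut,\quad t \geq t_1,$$
and this is pGSD for $\{U_t\}$ on the exhausting family $\{K_t : t \geq t_1\}$ by Remark~\ref{rem:opposite}.\ref{rem:opposite-x}. The identical argument with $\psi_0/\|\psi_0\|_{\infty}$ and the adjoint semigroup gives pGSD for $\{U_t^*\}$.

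The only delicate point — and the one I expect to be the main (but modest) obstacle — is verifying $\rho(\phi_0) > 0$ and $\rho^*(\psi_0) > 0$ from the mere assumption $\supp\rho \cap (M\setminus\cut)\neq\emptyset$. I would argue as follows: pick $x_0 \in \supp\rho \cap (M\setminus\cut)$; since $\cut$ is closed (by continuity of $\phi_0, \psi_0$, guaranteed by \eqref{A2}--\eqref{A3}) and $\phi_0(x_0) > 0$, continuity of $\phi_0$ yields an open neighbourhood $V' \ni x_0$ with $V' \cap \cut = \emptyset$ on which $\phi_0 \geq \phi_0(x_0)/2 > 0$; then $\rho(V') > 0$ by definition of support, so $\rho(\phi_0) \geq (\phi_0(x_0)/2)\,\rho(V') > 0$. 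The same reasoning applied to $\psi_0$ and $\rho^*$ closes the argument.
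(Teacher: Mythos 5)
Your proposal is correct and follows essentially the same route as the paper: the forward implication is exactly the specialization of Theorem~\ref{th:pGSD-to-puqe} to Dirac measures (with $\rho=m$, $\rho^*=m^*$), and the converse is the paper's argument of testing the uniform convergence with the (normalized) ground state and using the eigenequation to rearrange into the pGSD bound. Your extra details -- the continuity argument for $\rho(\phi_0)>0$ and the appeal to Remark~\ref{rem:opposite}.\ref{rem:opposite-x} to pass from $K_t\setminus\cut$ to $K_t$ -- are just explicit versions of steps the paper leaves implicit.
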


\begin{proof}
The implication \enquote{\ref{th:pGSD-pIUC-a}$\Rightarrow$\ref{th:pGSD-pIUC-b}} follows directly from the previous theorem. In order to show the converse direction, we check that the semigroup $\left\{U_t: t \geq 0 \right\}$ is pGSD. The argument for $\left\{U^*_t: t \geq 0 \right\}$ is similar.

Since $\supp \rho \cap (M \setminus \cut) \neq \emptyset$, we have $\rho(\phi_0) >0$. Recall that $\phi_0 \in L^{\infty}(M,\mu)$. Because of~\ref{th:pGSD-pIUC-b} there exists some $t_1 >t_0$ such that
\begin{gather*}
    \left|\rho(\phi_0)-\frac{U_t \phi_0(x)}{U_t \I_M(x)}\right|
    \leq \frac 12 \rho(\phi_0), \quad t \geq t_1, \;x \in K_t.
\end{gather*}
Hence, we have
\begin{gather*}
    U_t \I_M(x) \leq \frac{2}{\rho(\phi_0)} U_t \phi_0(x) = \frac{2}{\rho(\phi_0)} e^{-\lambda_0 t} \phi_0(x), \quad t \geq t_1, \;x \in K_t.
\end{gather*}
This gives pGSD for the semigroup $\left\{U_t: t \geq 0 \right\}$, and the proof is complete.
\end{proof}

We already know that the finiteness of the heat content for large $t$ implies the uniqueness of the quasi-stationary probability measures for both semigroups $\left\{U_t: t \geq 0 \right\}$ and $\left\{U^*_t: t \geq 0 \right\}$, see Thoeorem~\ref{th:hcf_and_eqe}.\ref{th:hcf_and_eqe-3}. We will now show a much stronger result, which says that the pGSD property and some additional information on the operator norm of $U_t$ and $U_t^*$ for large times, also implies the uniqueness of the quasi-stationary measures. The additional condition can be expressed as follows: there exists $t_2>0$ such that
\begin{align} \label{eq:add_ass_for_uniq}
    \sup_{t \geq t_2} \left(e^{\lambda_0 t}\sup_{x \in M}\big(U_t \I(x) + U^*_t \I(x)\big)\right) < \infty.
\end{align}
Since
\begin{gather*}
\sup_{x \in M} U_t \I(x)
    = \left\|U_t\right\|_{\infty,\infty}
    \quad \text{and} \quad
    \sup_{x \in M} U^*_t \I(x) = \left\|U^*_t\right\|_{\infty,\infty},
\end{gather*}
\eqref{eq:add_ass_for_uniq} is equivalent to the combination of the following two conditions \eqref{eq:equiv_1} and \eqref{eq:equiv_2}:
\begin{align} \label{eq:equiv_1}
    \sup_{t \geq t_2} \frac{\left\|U_t\right\|_{\infty,\infty}}{\left\|U_t\right\|_{2,2}} < \infty,
    \quad
    \sup_{t \geq t_2} \frac{\left\|U^*_t\right\|_{\infty,\infty}}{\left\|U^*_t\right\|_{2,2}} < \infty;
\end{align}
\begin{align} \label{eq:equiv_2}
    \sup_{t \geq t_2} e^{\lambda_0 t} \left\|U_t\right\|_{2,2} < \infty \quad \text{(or, equivalently, \ \ $\sup_{t \geq t_2} e^{\lambda_0 t} \left\|U_t^*\right\|_{2,2} < \infty$)}.
\end{align}
If the semigroups $\left\{U_t: t \geq 0 \right\}$ and $\left\{U^*_t: t \geq 0 \right\}$ are pGSD for the exhausting family of compact sets $\left\{K_t : t \geq t_2\right\}$ the condition \eqref{eq:add_ass_for_uniq} is equivalent to
\begin{gather} \label{eq:equiv_to_add_ass_for_uniq}
    \sup_{t \geq t_2} \left(e^{\lambda_0 t}\sup_{x \in K_t^c}\big(U_t \I(x) + U^*_t \I(x)\big)\right) < \infty.
\end{gather}
The next theorem shows that the uniqueness of the quasi-stationary measures extends to a large class of compact semigroups with infinite heat content.

\begin{theorem} \label{th:pGSD-gives-uniq}
    Let $M$ be a locally compact, but not compact, Polish space. Assume that  \eqref{A0}--\eqref{A4} hold and that both semigroups $\left\{U_t: t \geq 0 \right\}$ and $\left\{U^*_t: t \geq 0 \right\}$ are pGSD 
    and satisfy \eqref{eq:add_ass_for_uniq}. Then the measures $m$ and $m^*$ defined in \eqref{eq:quasi_stat_meas} are the unique quasi-stationary probability measures
		of $\left\{U_t: t \geq 0 \right\}$ and $\left\{U^*_t: t \geq 0 \right\}$, respectively, such that $m(M\setminus\cut)\cdot m^*(M\setminus\cut)>0$.
\end{theorem}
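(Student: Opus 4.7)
The plan is to mimic the uniqueness argument from Theorem~\ref{th:hcf_and_eqe}.\ref{th:hcf_and_eqe-3}, overcoming the absence of finite heat content by a \emph{localisation} of the competing quasi-stationary measure against the exhausting family witnessing pGSD. We work out the proof only for $\{U_t: t \geq 0\}$ and $m$; the argument for $\{U_t^*: t \geq 0\}$ and $m^*$ is perfectly symmetric. Let $\bar m$ be any quasi-stationary probability measure of $\{U_t: t \geq 0\}$ with $\bar m(M\setminus\cut)>0$. Since $\phi_0 \in C_b(M)\subset L^\infty(M,\mu)$ is strictly positive on $M\setminus\cut$, we have $\bar m(\phi_0)>0$, and applying the quasi-stationarity identity to $f=\phi_0$ together with the eigenequation $U_t\phi_0 = e^{-\lambda_0 t}\phi_0$ yields
\begin{gather*}
    \bar m(U_t\I_M) = e^{-\lambda_0 t}, \quad t>0.
\end{gather*}

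Next, fix $a,b\in(0,1)$ with $a+2b=1$ and let $\{K_t : t\geq t_0\}$ be an exhausting family witnessing pGSD for both $\{U_t: t \geq 0\}$ and $\{U_t^*: t \geq 0\}$. Because $K_{at}\uparrow M$, for all sufficiently large $t$ one has $\bar m(K_{at})>0$, and the normalised restriction $\sigma_t := \bar m(\cdot \cap K_{at})/\bar m(K_{at})$ is a probability measure in $\cM^1(K_{at})$ whose support meets $M\setminus\cut$ as soon as $t$ is large enough. For any $f\in L^\infty(M,\mu)$ with $\|f\|_\infty\leq 1$ split
\begin{gather*}
   \bar m(U_tf) = \bar m(K_{at})\,\sigma_t(U_tf) + \int_{K_{at}^c} U_t f(x)\,\bar m(dx).
\end{gather*}
By \eqref{eq:add_ass_for_uniq}, $e^{\lambda_0 t}\sup_{x\in M}U_t\I_M(x)$ is bounded uniformly in $t$, so
\begin{gather*}
    e^{\lambda_0 t}\left|\int_{K_{at}^c} U_t f\,d\bar m\right| \leq C\, \bar m(K_{at}^c) \longrightarrow 0 \quad\text{as } t\to\infty.
\end{gather*}
Meanwhile Theorem~\ref{th:pGSD-to-puqe} applied to $\sigma_t$ yields $\sigma_t(U_tf)/\sigma_t(U_t\I_M)\to m(f)$ with a rate uniform over $\|f\|_\infty\leq 1$.

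Specialising first to $f=\I_M$ and combining the identity $e^{\lambda_0 t}\bar m(U_t\I_M)=1$ with the vanishing tail pins down the normalisation $\bar m(K_{at})\,e^{\lambda_0 t}\sigma_t(U_t\I_M)\to 1$. Re-inserting this in the splitting, multiplying by $e^{\lambda_0 t}$, and using again the tail bound gives $e^{\lambda_0 t}\bar m(U_tf) \to m(f)$. By quasi-stationarity the left-hand side equals $\bar m(f)$ and is independent of $t$, so $\bar m(f)=m(f)$ for every $f\in L^\infty(M,\mu)$. Taking $f=\I_K$ for compact $K$ and using that the compact sets generate $\cB(M)$ together with the standard uniqueness theorem for finite Borel measures forces $\bar m = m$.

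The principal technical obstacle is the control of the tail over $K_{at}^c$: in the pGSD regime with possibly infinite heat content, the pointwise bound $U_t\I_M(x)\leq C e^{-\lambda_0 t}\phi_0(x)$ is only available on $K_t$, so there is no cheap way to ignore the mass that $\bar m$ may place outside the exhausting family. This is exactly the gap that \eqref{eq:add_ass_for_uniq} is designed to fill, and it is the reason the equivalent formulations \eqref{eq:equiv_1}--\eqref{eq:equiv_2}, respectively \eqref{eq:equiv_to_add_ass_for_uniq}, must be imposed on top of pGSD in order to recover the uniqueness that follows for free from finite heat content in Theorem~\ref{th:hcf_and_eqe}.
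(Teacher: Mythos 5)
Your proof is correct, and it takes a genuinely different route from the paper's. Both arguments rest on the same two ingredients---the progressive uniform quasi-ergodicity of Theorem~\ref{th:pGSD-to-puqe} and the tail control supplied by \eqref{eq:add_ass_for_uniq}---but they organise the localisation differently. The paper fixes an $\epsilon>0$, chooses a single compact $K=K(\epsilon)$ independent of $t$ (using only the tightness of the finite measure $\bar m$), restricts $\bar m$ to $K$, and runs a triangle-inequality $\epsilon$-$N$ argument in which the uniform bound $e^{\lambda_0 t}\sup_x U_t\I_M(x)<\infty$ is used to make the ratio $\bar m(\I_{K^c}U_t\I_M)/\bar m(U_t\I_M)$ small \emph{uniformly in $t$}. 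You instead let the localisation $K_{at}$ move with $t$ along the pGSD exhausting family, and you start from the exact scalar identity $\bar m(U_t\I_M)=e^{-\lambda_0 t}$ (obtained by testing quasi-stationarity against $\phi_0$ and cancelling $\bar m(\phi_0)>0$), which is slightly sharper than the one-sided estimate \eqref{eq:by_phi} that the paper quotes at the analogous point. This lets you pass directly to a limit $e^{\lambda_0 t}\bar m(U_tf)\to m(f)$ without any auxiliary $\epsilon$; since quasi-stationarity makes the left-hand side identically equal to $\bar m(f)$, the conclusion drops out. Both routes invoke \eqref{eq:add_ass_for_uniq} for exactly the same purpose---killing the tail integral over the complement of the localising set---and both conclude via compact sets generating $\cB(M)$ plus the standard uniqueness theorem for finite Borel measures. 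Your version is arguably a bit more streamlined; the paper's fixed-$K$ version makes more transparent that what is really needed from \eqref{eq:add_ass_for_uniq} is a bound \emph{uniform in $t$}, which a casual reader might otherwise overlook.

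One small point worth making explicit: since Theorem~\ref{th:pGSD-to-puqe} requires $\supp\sigma\cap(M\setminus\cut)\neq\emptyset$, you do need $\bar m(K_{at}\setminus\cut)>0$ for large $t$, which you assert; the justification is that $\bar m(M\setminus\cut)>0$ and $K_{at}\uparrow M$, so by continuity of measure $\bar m(K_{at}\cap(M\setminus\cut))\uparrow\bar m(M\setminus\cut)>0$. Also, you tacitly use that the two pGSD families for $\{U_t\}$ and $\{U_t^*\}$ can be taken to coincide (pass to intersections), which is the same harmless reduction the paper makes in Theorem~\ref{th:pGSD-to-puqe}. Neither of these is a gap---just worth a sentence each in a polished write-up.
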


\begin{proof}
We give a proof for the measure $m$ only; the argument for $m^*$ is similar. Suppose that $\bar{m}$ is a quasi-stationary probability measure of $\left\{U_t: t \geq 0 \right\}$ such that $\supp \bar m \cap (M \setminus \cut) \neq \emptyset$.
Following the argument from the proof of Theorem~\ref{th:hcf_and_eqe}.\ref{th:hcf_and_eqe-3}, it is enough to show that $m(f) = \bar{m}(f)$ for $f \in L^{\infty}(M,\mu)$.

For a compact set $K \subset M$ such that $\bar{m}(\I_{K}\phi_0) > 0$ (in particular, $\bar{m}(K) > 0$)  we define
\begin{gather*}
    \sigma_K(dx)
    = \frac{\I_K(x)\bar{m}(dx)}{\bar{m}(K)}.
\end{gather*}
Fix $f \in L^{\infty}(M,\mu)$. We have
\begin{gather*}
    |m(f) - \bar{m}(f)|
    \leq \left|m(f) - \frac{\sigma_K(U_t f)}{\sigma_K(U_t \I_M)}\right| +  \left|\frac{\sigma_K(U_t f)}{\sigma_K(U_t \I_M)} - \bar{m}(f)\right|.
\end{gather*}
Using the convergence assertion from Theorem~\ref{th:pGSD-to-puqe} with the probability measure $\sigma = \sigma_K$, we see that the first term on the right-hand side vanishes as $t \to \infty$. We will now show that for every $\epsilon \in (0,1)$ there exists a compact set $K \subset M$ such that the second term is less than $\epsilon$, uniformly for large $t$.

Fix $\epsilon \in (0,1)$. By \eqref{eq:add_ass_for_uniq},
\begin{gather*}
    c := \sup_{t \geq t_2}\left( e^{\lambda_0 t}  \sup_{x \in M} U_t \I(x)\right) < \infty.
\end{gather*}
Since $\bar{m}$ is a finite measure, there exists a compact set $K = K(\epsilon) \subset M$ as above such that
\begin{align} \label{eq:tightness}
    \bar{m}(K^c)
    \leq \frac{\bar{m}(\phi_0)}{c\left\|\phi_0\right\|_{\infty}(1+\bar{m}(f)+ \left\|f\right\|_{\infty})} \frac{\epsilon}{1+\epsilon}.
\end{align}
The existence of such a set is a consequence of the $\sigma$-compactness of the space $M$.
By the quasi-stationarity property of $\bar{m}$, cf.\ \eqref{eq:quasi-stat-def}, we have
\begin{gather*}
    \bar{m}(\I_{K} U_{t} f) + \bar{m}(\I_{K^c} U_{t} f) = \bar{m}(f) \big(\bar{m}(\I_{K} U_{t} \I_M) + \bar{m}(\I_{K^c} U_{t} \I_M) \big),
\end{gather*}
which implies that
\begin{align*}
    \left|\frac{\sigma_{K}(U_{t} f)}{\sigma_{K}(U_{t} \I_M)} - \bar{m}(f)\right|
    &= \left|\frac{\bar{m}(\I_{K} U_{t} f)}{\bar{m}(\I_{K} U_{t} \I_M)} - \bar{m}(f)\right|\\
    &= \left|\frac{\bar{m}(f) \big(\bar{m}(\I_{K} U_{t} \I_M) + \bar{m}(\I_{K^c} U_{t} \I_M) \big) - \bar{m}(\I_{K^c} U_{t} f)}{\bar{m}(\I_{K} U_{t} \I_M)} - \bar{m}(f)\right|\\
	&= \left|\frac{\bar{m}(f)\bar{m}(\I_{K^c} U_{t} \I_M)  - \bar{m}(\I_{K^c} U_{t} f)}{\bar{m}(\I_{K} U_{t} \I_M)}\right|\\
    &\leq \left(\bar{m}(f)+ \left\|f\right\|_{\infty}\right) \frac{\bar{m}(\I_{K^c} U_{t} \I_M)}{\bar{m}( U_{t} \I_M) - \bar{m}(\I_{K^c} U_{t} \I_M)} \\
    &= \left(\bar{m}(f)+ \left\|f\right\|_{\infty}\right) \frac{\frac{\bar{m}(\I_{K^c} U_{t} \I_M)}{\bar{m}( U_{t} \I_M)}}{\;\;1 - \frac{\bar{m}(\I_{K^c} U_{t} \I_M)}{\bar{m}( U_{t} \I_M)}\;\;}.
\end{align*}
From \eqref{eq:by_phi} we infer that
\begin{gather*}
    \bar{m}( U_{t} \I_M) \geq e^{-\lambda_0 t} \frac{\bar{m}(\phi_0)}{\left\|\phi_0\right\|_{\infty}}, \quad t >0.
\end{gather*}
Combining this inequality with the pGSD property and \eqref{eq:tightness} yields
\begin{gather*}
    \frac{\bar{m}(\I_{K^c} U_{t} \I_M)}{\bar{m}( U_{t} \I_M)}
    \leq c \,\frac{\left\|\phi_0\right\|_{\infty}}{\bar{m}(\phi_0)}\, \bar{m}(K^c)
    \leq \frac{1}{1+\bar{m}(f)+ \left\|f\right\|_{\infty}} \frac{\epsilon}{1+\epsilon},
    \quad t \geq t_2.
\end{gather*}
Since the function $(0,1) \ni x \mapsto x / (1-x)$ is increasing, we finally get
\begin{gather*}
    |m(f) - \bar{m}(f)|
    \leq \limsup_{t \to \infty} \left|\frac{\sigma_{K}(U_{t} f)}{\sigma_{K}(U_{t} \I_M)} - \bar{m}(f)\right| \leq \epsilon,
\end{gather*}
which completes the proof.
\end{proof}

\section{Large time asymptotic behaviour of compact semigroups}\label{sec-large-time}

As a by-product of our investigations on quasi-ergodicity, we also obtain some statements on the asymptotic behaviour of compact $L^2$-semigroups on $L^p$ spaces. In this generality such results seem to be new.

\begin{corollary}\label{cor:hcf_and_asymp}
    Let $M$ be a locally compact Polish space and assume that \eqref{A0}--\eqref{A3} hold. 
	If there exists some $t_1 \geq t_0$ such that the heat content is finite, i.e.\
\begin{gather*}
    Z(t_1) < \infty,
\end{gather*}
then we have the following:
\begin{enumerate}
\item\label{cor:hcf_and_asymp-1}
    For every $p \in [1,\infty]$ there exist $C>0$ such that for every finite measure $\sigma$ on $M$, and for all $t>6t_1$ and $t_1 \leq s \leq t/2$ we have
    \begin{gather*}
        \sup_{\substack{f \in  L^p(M,\mu)\\ \left\|f\right\|_p \leq 1}} \left|e^{\lambda_0 t}\sigma(U_t f) - \frac 1\Lambda \, \sigma(\phi_0) \int_M f(y) \psi_0(y) \,\mu(dy)\right|
        \leq C  \, e^{-\gamma (t-s)} \, e^{\lambda_0 s}\sigma(U_s \I_M),
    \intertext{and}
        \sup_{\substack{f \in  L^p(M,\mu)\\ \left\|f\right\|_p \leq 1}} \left|e^{\lambda_0 t}\sigma(U^*_t f) - \frac 1\Lambda \, \sigma(\psi_0) \int_M f(x) \phi_0(x) \,\mu(dx)\right|
        \leq C  \, e^{-\gamma (t-s)} \, e^{\lambda_0 s}\sigma(U^*_s \I_M).
\end{gather*}
In particular, taking $\sigma = \delta_z$, $z \in M$, and $s = t_1$, we obtain uniform estimates
\begin{gather*}
        \sup_{x \in M} \sup_{\substack{f \in  L^p(M,\mu)\\ \left\|f\right\|_p \leq 1}} \left|e^{\lambda_0 t}U_t f(x) - \frac 1\Lambda \, \phi_0(x) \int_M f(y) \psi_0(y) \,\mu(dy)\right|
        \leq \widetilde C  \, e^{-\gamma t},
    \intertext{and}
        \sup_{y \in M} \sup_{\substack{f \in  L^p(M,\mu)\\ \left\|f\right\|_p \leq 1}} \left|e^{\lambda_0 t}U^*_t f(y) - \frac 1\Lambda \, \psi_0(y) \int_M f(x) \phi_0(x) \,\mu(dx)\right|
        \leq \widetilde C  \, e^{-\gamma t}.
\end{gather*}
\item\label{cor:hcf_and_asymp-2}
For every $p, r \in [1,\infty]$ there exist $C>0$ such that for all $t>6t_1$ we have
    \begin{gather*}
        \sup_{\substack{f \in  L^p(M,\mu)\\ \left\|f\right\|_p \leq 1}} \left\|e^{\lambda_0 t} U_t f - \frac 1\Lambda \, \phi_0 \int_M f(y) \psi_0(y) \,\mu(dy)\right\|_r
        \leq C  \, e^{-\gamma t},
    \intertext{and}
        \sup_{\substack{f \in  L^p(M,\mu)\\ \left\|f\right\|_p \leq 1}} \left\|e^{\lambda_0 t}U^*_t f - \frac 1\Lambda \, \psi_0 \int_M f(x) \phi_0(x) \,\mu(dx)\right\|_r
        \leq C  \, e^{-\gamma t}.
\end{gather*}
\end{enumerate}
\end{corollary}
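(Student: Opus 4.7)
The plan is to apply Lemma~\ref{lem:unif_conv_impr} directly, mirroring the strategy from Theorem~\ref{th:hcf_and_eqe} but avoiding the normalization by $\sigma(U_t\I_M)$. This makes the argument considerably simpler, since we no longer need the lower bound \eqref{eq:by_phi} on the denominator; the assertions we are after are essentially quantitative versions of the rank-one approximation $e^{\lambda_0 t} U_t \approx \tfrac{1}{\Lambda}\phi_0 \otimes \psi_0$ on the appropriate $L^p$-scale.

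For assertion \ref{cor:hcf_and_asymp-1}, I would start from the pointwise identity
\begin{gather*}
    e^{\lambda_0 t} U_t f(x) - \frac{1}{\Lambda}\phi_0(x) \int_M f(y) \psi_0(y)\,\mu(dy)
    = \int_M f(y)\left(e^{\lambda_0 t} u_t(x,y) - \frac{1}{\Lambda}\phi_0(x)\psi_0(y)\right)\mu(dy),
\end{gather*}
and invoke Lemma~\ref{lem:unif_conv_impr} with parameters $(s,r) = (s, t_1)$, where $s \in [t_1, t/2]$ and $t > 6t_1$ (so that $t - s - t_1 > 2t_0$), to bound the kernel difference by $C e^{-\gamma(t-s-t_1)} e^{\lambda_0 s} U_s\I_M(x) \cdot e^{\lambda_0 t_1} U^*_{t_1}\I_M(y)$. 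A single application of H\"older's inequality with conjugate exponents $p,q$ then yields
\begin{gather*}
    \left|e^{\lambda_0 t} U_t f(x) - \frac{1}{\Lambda}\phi_0(x) \int_M f\,\psi_0\,d\mu\right|
    \leq C' e^{-\gamma(t-s)} e^{\lambda_0 s} U_s\I_M(x) \|f\|_p \|U^*_{t_1}\I_M\|_q.
\end{gather*}
Integrating in $x$ against $\sigma$ produces the first inequality, and the specialization to $\sigma = \delta_z$ with $s = t_1$ gives a bound with the $z$-dependent factor $U_{t_1}\I_M(z)$, which is uniformly bounded in $z$ by $\|U_{t_1}\I_M\|_\infty < \infty$ -- a consequence of \eqref{A2}. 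The adjoint version follows symmetrically using Lemma~\ref{lem:unif_conv_impr} with $(s,r) = (t_1, s)$.

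For assertion \ref{cor:hcf_and_asymp-2}, I would apply Lemma~\ref{lem:unif_conv_impr} with the symmetric choice $s = r = t_1$ (valid for $t > 2t_1 + 2t_0$, in particular for $t > 6t_1$), so the kernel difference is bounded by $C e^{-\gamma(t-2t_1)} e^{2\lambda_0 t_1} U_{t_1}\I_M(x) U^*_{t_1}\I_M(y)$. This symmetric split gives, after H\"older,
\begin{gather*}
    \left|e^{\lambda_0 t} U_t f(x) - \frac{1}{\Lambda}\phi_0(x) \int_M f\,\psi_0\,d\mu\right|
    \leq C'' e^{-\gamma t} U_{t_1}\I_M(x) \|f\|_p \|U^*_{t_1}\I_M\|_q.
\end{gather*}
Taking the $L^r(M,\mu)$-norm in $x$ then gives the desired estimate, provided $\|U_{t_1}\I_M\|_r < \infty$ for every $r \in [1,\infty]$. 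The adjoint statement is obtained identically by interchanging the roles of $U_t$ and $U^*_t$.

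The only real subtlety -- and the main bookkeeping obstacle -- is verifying that $\|U^*_{t_1}\I_M\|_q$ and $\|U_{t_1}\I_M\|_r$ are finite across the full range of exponents $q, r \in [1,\infty]$. This is handled by exactly the interpolation argument already carried out in the proof of Theorem~\ref{th:hcf_and_eqe}.\ref{th:hcf_and_eqe-2}: the $L^1$-norm is finite because $Z(t_1) = \|U_{t_1}\I_M\|_1 = \|U^*_{t_1}\I_M\|_1 < \infty$, the $L^\infty$-norm is finite by \eqref{A2}, and the intermediate cases follow by interpolation. No further obstacle is expected; the argument is essentially a streamlined (denominator-free) version of the proof of Theorem~\ref{th:hcf_and_eqe}.
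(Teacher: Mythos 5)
Your proposal is correct and follows essentially the same route as the paper, which itself derives the corollary by direct inspection of the estimates in the proof of Theorem~\ref{th:hcf_and_eqe}: Lemma~\ref{lem:unif_conv_impr} with $r=t_1$ (resp.\ $s=r=t_1$), H\"older's inequality, and finiteness of $\|U^*_{t_1}\I_M\|_q$ and $\|U_{t_1}\I_M\|_r$ via $Z(t_1)<\infty$, \eqref{A2} and interpolation. The only difference is cosmetic: you drop the normalization by $\sigma(U_t\I_M)$, which is exactly why the bound \eqref{eq:by_phi} is no longer needed.
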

The above bounds follow from direct inspection of the estimates in the proof of Theorem \ref{th:hcf_and_eqe}. The cases $p=1$ in Part~\ref{cor:hcf_and_asymp-1} and $p=1$, $r=\infty$ in in Part~\ref{cor:hcf_and_asymp-2} are special -- they follow directly from \eqref{A2} and the finiteness of the heat content is actually needed in that case.

Let $\left\{K_t : t \geq t_0\right\}$ be an exhausting family of sets in $M$. By considering the sequence of measures $(\sigma_n)$ such that $\sigma_n(\, \cdot \,) := \mu(\, \cdot \, \cap K_n)$, $n \in \N$, and using Corollary \ref{cor:hcf_and_asymp}.\ref{cor:hcf_and_asymp-1} we obtain the following asymptotics of the heat content.

\begin{corollary}\label{cor:hc_asym}
    Let $M$ be a locally compact Polish space and assume that \eqref{A0}--\eqref{A3} hold.
	If there exists some $t_1 \geq t_0$ such that the heat content is finite, i.e.\
\begin{gather*}
    Z(t_1) < \infty,
\end{gather*}
then there is some $C>0$ such that
\begin{gather*}
\left|e^{\lambda_0 t} Z(t) - \frac{\left\|\phi_0\right\|_1 \left\|\psi_0\right\|_1}{\Lambda}\right| \leq C e^{-\gamma t}, \quad t > 6t_1.
\end{gather*}
\end{corollary}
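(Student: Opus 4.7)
The plan is to derive this directly from Corollary~\ref{cor:hcf_and_asymp}.\ref{cor:hcf_and_asymp-1}, using the hint given just before the statement: test the inequality against a sequence of finite measures that exhaust~$\mu$, and pass to the limit by monotone convergence.

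More precisely, fix an exhausting family $\{K_n : n \in \N\}$ of compact subsets of $M$ with $K_n \uparrow M$; such a family exists because $M$ is $\sigma$-compact (cf.\ the topological preliminaries in Section~\ref{sec:cpt}). Define the finite measures $\sigma_n(\,\cdot\,) := \mu(\,\cdot\, \cap K_n)$; these are genuinely finite since $\mu$ is locally finite, hence finite on compacts. Apply Corollary~\ref{cor:hcf_and_asymp}.\ref{cor:hcf_and_asymp-1} with $p = \infty$, the test function $f = \I_M$ (so $\|f\|_p = 1$), the measure $\sigma = \sigma_n$, and the choice $s = t_1$. This yields, for all $t > 6t_1$,
\begin{gather*}
    \left|e^{\lambda_0 t}\sigma_n(U_t \I_M) - \frac{1}{\Lambda}\,\sigma_n(\phi_0)\int_M \psi_0(y)\,\mu(dy)\right|
    \leq C\,e^{-\gamma(t-t_1)}\,e^{\lambda_0 t_1}\sigma_n(U_{t_1}\I_M),
\end{gather*}
with a constant $C$ that is independent of $n$.

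Now I would send $n \to \infty$ term by term. Because $U_t\I_M \geq 0$, $\phi_0 \geq 0$, and $U_{t_1}\I_M \geq 0$, monotone convergence gives $\sigma_n(U_t\I_M) \uparrow Z(t)$, $\sigma_n(\phi_0) \uparrow \|\phi_0\|_1$, and $\sigma_n(U_{t_1}\I_M) \uparrow Z(t_1)$. Note that $\|\phi_0\|_1 < \infty$ by Theorem~\ref{th:hcf_and_eqe}.\ref{th:hcf_and_eqe-1}, and the right-hand side is bounded by the finite quantity $Z(t_1)$, so the limits on both sides are finite. Passing to the limit and using $\int_M \psi_0(y)\,\mu(dy) = \|\psi_0\|_1$ gives
\begin{gather*}
    \left|e^{\lambda_0 t} Z(t) - \frac{\|\phi_0\|_1\|\psi_0\|_1}{\Lambda}\right|
    \leq C\,e^{(\lambda_0 + \gamma) t_1}\,Z(t_1)\,e^{-\gamma t}, \quad t > 6t_1,
\end{gather*}
which is the claimed estimate with the new constant $\widetilde C := C e^{(\lambda_0 + \gamma) t_1} Z(t_1)$.

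There is no real obstacle here; the only thing one has to be slightly careful about is that the inequality from Corollary~\ref{cor:hcf_and_asymp} is applied with a constant uniform in $n$ (which it is, since the constant depends only on $p$ and the semigroup data), and that $Z(t_1) < \infty$ is used both to make the right-hand side finite and to guarantee via Theorem~\ref{th:hcf_and_eqe}.\ref{th:hcf_and_eqe-1} that $\|\phi_0\|_1, \|\psi_0\|_1 < \infty$ so that the target expression on the left is well defined.
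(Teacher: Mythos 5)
Your proof is correct and follows exactly the route the paper indicates: it applies Corollary~\ref{cor:hcf_and_asymp}.\ref{cor:hcf_and_asymp-1} with $f=\I_M$, $s=t_1$ and the truncated measures $\sigma_n(\,\cdot\,)=\mu(\,\cdot\,\cap K_n)$, then passes to the limit by monotone convergence, with the constant uniform in $n$. Nothing is missing.
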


If $Z(t) = \infty$ for large $t$'s, we cannot expect the asymptotic estimates to be uniform as in Corollary~\ref{cor:hcf_and_asymp}.\ref{cor:hcf_and_asymp-1}. Still, we can prove a progressive version which is similar to Theorem \ref{th:pGSD-to-puqe}.
The following result can be obtained directly by inspection of the proof of that theorem.

\begin{corollary} \label{cor:pGSD-to-asymp}
    Let $M$ be a locally compact, but not compact, Polish space. Assume that \eqref{A0}--\eqref{A4} hold 
    and that both semigroups $\left\{U_t: t \geq 0 \right\}$ and $\left\{U^*_t: t \geq 0 \right\}$ are pGSD for the exhausting family of compact sets $\left\{K_t : t \geq t_0\right\}$. Then for every $a, b \in (0,1)$ such that $a+2b=1$ and for every $p \in [1,\infty]$ there exists some $C>0$ such that for all $t>(4/(a \wedge b)) t_0$
    \begin{gather*}
            \sup_{\sigma \in \cM^1(K_{a t})} \sup_{f \in  L^p(M,\mu) \atop \left\|f\right\|_p \leq 1}
    \left|e^{\lambda_0 t}\sigma(U_t f) - \frac 1\Lambda \, \sigma(\phi_0) \int_M f(y) \psi_0(y) \,\mu(dy) \right|
    \leq C  \kappa_b(t),
    \intertext{and}
        \sup_{\sigma \in \cM^1(K_{a t})} \sup_{f \in  L^p(M,\mu) \atop \left\|f\right\|_p \leq 1}
    \left|e^{\lambda_0 t}\sigma(U^*_t f) - \frac 1\Lambda \, \sigma(\psi_0) \int_M f(x) \phi_0(x) \,\mu(dx) \right|
    \leq C  \kappa_b(t),
    \end{gather*}
		where the time-rate $\kappa_b(t)$ is given by \eqref{eq:kappa-rate}. By considering $\sigma = \delta_z$, $z \in K_{a t}$, we obtain progressive uniform estimate on the sets $K_{a t}$ when $t \to \infty$.
\end{corollary}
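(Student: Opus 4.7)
The strategy is to reread the proof of Theorem~\ref{th:pGSD-to-puqe} and to extract the intermediate \emph{un-normalized} bound, i.e.\ the estimate that was obtained there \emph{before} one divided by $\sigma(U_t\I_M)$ in order to produce a quasi-ergodic statement. This un-normalized bound is essentially the statement of the corollary.

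First, by Lemma~\ref{lem:pGSD_L1}, the joint pGSD hypothesis gives $\phi_0,\psi_0 \in L^1(M,\mu)$, so that the integral $\int_M f(y)\psi_0(y)\,\mu(dy)$ (and its adjoint counterpart) makes sense for every $f \in L^p(M,\mu)$, $p \in [1,\infty]$. I fix $p \in [1,\infty]$, $a,b \in (0,1)$ with $a+2b=1$, $t > (4/(a\wedge b))t_0$ (this range ensures both $at \geq t_0$ and $bt-2t_0 > 2t_0$, which are needed below), a probability measure $\sigma \in \cM^1(K_{at})$, and $f \in L^p(M,\mu)$ with $\|f\|_p \leq 1$.

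Second, I repeat verbatim the argument in the proof of Theorem~\ref{th:pGSD-to-puqe}: I write
\[
e^{\lambda_0 t}U_t f(x) - \frac{1}{\Lambda}\phi_0(x)\int_M f(y)\psi_0(y)\,\mu(dy) = \int_M \Big(e^{\lambda_0 t}u_t(x,y) - \frac{1}{\Lambda}\phi_0(x)\psi_0(y)\Big) f(y)\,\mu(dy),
\]
insert Chapman--Kolmogorov at time $2t_0$, split the intermediate $w$-integral into $K_{bt}$ and $K_{bt}^c$, estimate $|U_{2t_0}f(w)| \leq \|U_{t_0}\|_{p,\infty}\|f\|_p U_{t_0}\I_M(w)$, and apply Lemma~\ref{lem:unif_conv_impr} to $u_{t-2t_0}$ with the choice $s=at$, $r=bt$. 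On $K_{bt}$ the product $e^{\lambda_0 bt}\int_{K_{bt}}U^*_{bt}\I_M(w)\,\mu(dw)$ is controlled by pGSD of $\{U^*_t\}$ and $\psi_0 \in L^1(M,\mu)$; on $K_{bt}^c$ one factors out $\sup_{w \in K_{bt}^c}U_{t_0}\I_M(w)$ and uses pGSD of $\{U_t\}$ to control the crude bound on the integrand. Integrating the resulting pointwise-in-$x$ inequality against $\sigma$ and using the trivial estimate $\sigma(\phi_0) \leq \|\phi_0\|_\infty$ (because $\sigma$ is a probability measure) yields the required $U_t$-bound with rate $e^{-\gamma bt} + \sup_{w \in K_{bt}^c}U_{t_0}\I_M(w) \leq \kappa_b(t)$. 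The estimate for $\{U^*_t\}$ follows by the same computation with the roles of $\phi_0,\psi_0$ swapped and produces the contribution $\sup_{w \in K_{bt}^c}U^*_{t_0}\I_M(w)$, again dominated by $\kappa_b(t)$.

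Third, I observe that the specialisation $\sigma = \delta_z$ for $z \in K_{at}\setminus\cut$ is legitimate because $\delta_z$ is an element of $\cM^1(K_{at})$, giving the asserted progressive pointwise estimate uniform on $K_{at}$ as $t \to \infty$. The \emph{main obstacle} is conceptual rather than technical: one has to notice that the division by $\sigma(U_t\I_M)$ in the proof of Theorem~\ref{th:pGSD-to-puqe}, which forced the normalisation by $\sigma(\phi_0)$ and the auxiliary hypothesis $\supp\sigma \cap (M\setminus\cut)\neq\emptyset$, is not needed here; consequently the present bound is uniform over \emph{all} probability measures supported in $K_{at}$, with no non-degeneracy restriction on $\sigma$.
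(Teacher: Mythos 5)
Your proposal is correct and follows essentially the paper's intended argument: the paper proves this corollary simply "by inspection of the proof of Theorem~\ref{th:pGSD-to-puqe}", i.e.\ by extracting the un-normalized estimate $\left|e^{\lambda_0 t}\sigma(U_t f) - \frac1\Lambda\,\sigma(\phi_0)\int f\psi_0\,d\mu\right| \leq c\,\kappa_b(t)\,\|f\|_p\,\sigma(\phi_0)$ before dividing by $\sigma(U_t\I_M)$, and bounding $\sigma(\phi_0)\leq\|\phi_0\|_\infty$, exactly as you do, including the correct observation that the non-degeneracy condition $\supp\sigma\cap(M\setminus\cut)\neq\emptyset$ becomes superfluous. (Only a cosmetic remark: in your last step the restriction $z\in K_{at}\setminus\cut$ is unnecessary -- the un-normalized bound holds for every $z\in K_{at}$, as stated in the corollary.)
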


\section{Examples and applications} \label{sec:examples}

\subsection{Feynman--Kac semigroups of Feller processes} \label{sec:F-K-general}
Suppose that $(M,d)$ is a locally compact Polish space, and $\mu$ is a positive, locally finite measure with full topological support on the Borel sets $\cB(M)$. We assume that the space $(M,d)$ is \emph{unbounded} (in particular, it is \emph{not compact}) and the measure $\mu$ is \emph{not finite}. Let $(X_t)_{t \geq 0}$ and $(\widehat{X}_t)_{t \geq 0}$ be Markov processes with values in $M$ and the following transition semigroups:
\begin{gather*}
    P_t f(x) = \ex^x f(X_t)
    \quad\text{and}\quad
    \widehat P_t f(y) = \widehat{\ex}^y f(\widehat{X}_t),
    \quad f \in L^2(M,\mu);
\end{gather*}
Throughout this section we assume that $P_t$ and $\widehat P_t$ are contractions on $L^2(M, \mu)$ and that the semigroups $\{P_t:t \geq 0\}$, $\{\widehat P_t :t \geq 0\}$ have both the Feller and the strong Feller property, i.e.\ they are \enquote{doubly Feller} in the sense of Chung \cite{bib:Ch1986}. Moreover, we assume that the corresponding Markov resolvents are in a weak duality relation with respect to the measure $\mu$, see \cite[Definition 13.1]{ChuW2005}. By \cite[Proposition 13.6]{ChuW2005} we have
\begin{gather*}
    \int f(x) \cdot P_t g(x)\,\mu(dx) = \int \widehat P_t f(x) \cdot g(x)\,\mu(dx), \quad f,g \in L^2(M,\mu),\; t>0,
\end{gather*}
this means that $\widehat P_t$ is the (functional analytic) adjoint operator $P_t^*$ of $P_t$ in the space $L^2(M,\mu)$. Finally,
we assume that for every $t>0$ the operators $P_{t}$ and $\widehat P_{t}$ are positivity improving and bounded from $L^2(M,\mu)$ to $L^{\infty}(M,\mu)$.

In particular, the \enquote{free} semigropus $\{P_t, t\geq 0\}$ and  $\{\widehat P_t, t\geq 0\}$ are such that they satisfy our assumptions \eqref{A1}--\eqref{A3}. We are now going to give sufficient criteria such that the corresponding Feynman--Kac semigroups $\{U_t, t\geq 0\}$ and  $\{U_t^*, t\geq 0\}$ satisfy \eqref{A0}--\eqref{A4}.

Let $V$ be a locally bounded potential on $M$ which is bounded below, that is $V = V_{+}-V_{-}$ with $V_{+} \in L^{\infty}_{\loc}(M,\mu)$ and $V_{-} \in L^{\infty}(M,\mu)$, for which we define the \emph{Feynman--Kac semigroup}
\begin{gather*}
    U_t f(x) = \ex^x \left[e^{-\int_0^t V(X_s)\, ds} f(X_t) \right], \quad f \in L^2(M,\mu), \; t>0.
\end{gather*}
By \cite[Theorem 13.25]{ChuW2005} the weak dual or adjoint semigroup consists of operators
\begin{gather*}
    \widehat U_t f(x) =  U^*_t f(x) = \widehat \ex^x \left[e^{-\int_0^t V(\widehat X_s)\,ds} f(\widehat X_t) \right], \quad f \in L^2(M,\mu), \; t>0.
\end{gather*}

We first verify the assumptions \eqref{A1}--\eqref{A3} and show that $U_t$ and $U^*_t$ are integral operators. If $(X_t)_{t\geq 0}$ is a stochastic process, we denote by $\tau_B := \inf\left\{t>0 \,:\, X_t\in B^c\right\}$ the first exit time from the set $B$.
\begin{lemma}\label{lem:general-Feller-ass}
  The following assertions hold.
  \begin{enumerate}
    \item\label{lem:general-Feller-ass-a}
        The operators $U_{t}$, $t>0$, are positivity improving.

    \item\label{lem:general-Feller-ass-b}
		Both semigroups $\left\{U_t: t \geq 0 \right\}$ and $\left\{U^*_t: t \geq 0 \right\}$ are Feller and strong Feller.

		\item\label{lem:general-Feller-ass-c}
    For every $t>0$ the operators $U_t, U^*_t:L^2(M,\mu) \to L^{\infty}(M,\mu)$ are bounded. In particular, for every $t>0$ there exists a measurable kernel $u_t(x,y)$ such that \eqref{eq:kernel} holds, and the semigroups $\{U_t:t\geq 0\}$ and $\{\widehat U_t:t\geq 0\}$ are in duality with respect to the measure $\mu$.
    \end{enumerate}
\end{lemma}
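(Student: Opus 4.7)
The plan is to establish the three claims in the order (c), (b), (a), using throughout the pointwise domination
\begin{equation*}
  0 \leq U_t f(x) \leq e^{t\|V_-\|_\infty} P_t f(x), \qquad f\geq 0,\; t>0,\; x\in M,
\end{equation*}
which is immediate from the Feynman--Kac formula since $V_+ \geq 0$.

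For (c), this domination together with the assumed boundedness of $P_t : L^2(M,\mu) \to L^\infty(M,\mu)$ gives at once $\|U_t\|_{2,\infty} \leq e^{t\|V_-\|_\infty}\|P_t\|_{2,\infty} < \infty$, and the analogous bound for $U_t^*$ follows from the corresponding bound for $\widehat P_t$. The existence of a measurable kernel $u_t(x,y)$ representing $U_t$ then follows from the standard argument sketched in Remark~\ref{rem:discuss_ass}: by the semigroup property $U_{2t}$ is a composition of two operators in $L^2(M,\mu)\to L^\infty(M,\mu)$, hence is Hilbert--Schmidt on $L^2$ and admits an $L^2(\mu\otimes\mu)$-kernel; Chapman--Kolmogorov then yields a measurable kernel for $U_t$ at each $t>0$. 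The weak duality of $U_t$ and $U_t^*$ with respect to $\mu$ is inherited from that of $P_t$ and $\widehat P_t$ together with the time-reversal identity for the multiplicative functional $\exp(-\int_0^t V(X_s)\,ds)$, as in \cite[Theorem~13.25]{ChuW2005}.

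For (b), I would approximate $V$ from below by $V^{(n)} := (V_+\wedge n) - V_-\in L^\infty(M,\mu)$. The corresponding Feynman--Kac semigroups $U_t^{(n)}$ are bounded-potential perturbations of $P_t$; a Dyson--Phillips expansion shows that each $U_t^{(n)}$ inherits the doubly-Feller property from $P_t$ (cf.~\cite{bib:DC,bib:Ch1986}). The monotone convergence $U_t^{(n)} f \downarrow U_t f$ for $0\leq f\in B_b(M)$ follows from $e^{-\int_0^t V^{(n)}(X_s)\,ds}\downarrow e^{-\int_0^t V(X_s)\,ds}$ $P^x$-a.s., which in turn uses the $P^x$-a.s.\ finiteness of $\int_0^t V_+(X_s)\,ds$ -- a consequence of the Feller-process tightness $P^x(\tau_{K_n} > t)\uparrow 1$ as $K_n\uparrow M$ and the local boundedness of $V_+$. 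A locally uniform version of this convergence, with the strong Feller property of $P_t$ controlling the truncation error on compacta, promotes the limit to $U_t : B_b(M) \to C_b(M)$ and $U_t : C_\infty(M)\to C_\infty(M)$. The same reasoning applies verbatim to $\{U_t^*\}$ via the dual process $\widehat X$.

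For (a), I would exploit the sharper lower bound, valid for any compact set $K\subset M$ and $0\leq f$:
\begin{equation*}
  U_t f(x)\geq \exp\!\left(-t\bigl(\|V_+\|_{L^\infty(K)}+\|V_-\|_\infty\bigr)\right)\ex^x\!\left[f(X_t)\,\I_{\{\tau_K > t\}}\right],
\end{equation*}
obtained by restricting the Feynman--Kac expectation to paths that stay inside $K$, on which $V$ is bounded. Denoting the $K$-killed semigroup by $P_t^K f(x) := \ex^x[f(X_t)\,\I_{\{\tau_K>t\}}]$, monotone convergence as $K\uparrow M$ along an exhausting sequence of compacts gives $P_t^K f(x) \uparrow P_t f(x)$ for every $x\in M$. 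By the positivity-improving property of the free semigroup, $P_t f(x)>0$ for $\mu$-a.e.\ $x$, so for each such $x$ one may choose a compact $K=K(x)$ with $P_t^K f(x)>0$, which by the displayed inequality forces $U_t f(x)>0$ -- exactly positivity improvement of $U_t$. The technically hardest step is the Feller-preservation argument in (b): the passage from bounded to locally bounded potentials requires careful control of the error $U_t f - U_t^{(n)} f$ uniformly on compacta, and this is where the full force of the doubly-Feller property of $\{P_t\}$ is needed for the free semigroup; the positivity improvement in (a) is comparatively soft once the killed-semigroup approximation is in place.
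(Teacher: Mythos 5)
Part~\ref{lem:general-Feller-ass-a} of your argument is essentially the paper's own proof: bound the Feynman--Kac functional from below on the event that the path stays in a large ball/compact set (where $V_+$ is bounded), let the set increase so that the killed expectation converges monotonically to $P_tf(x)$, and use that $P_t$ is positivity improving. The $L^2\to L^\infty$ bound at the start of~\ref{lem:general-Feller-ass-c} via $0\le U_tf\le e^{t\|V_-\|_\infty}P_tf$ is also exactly the paper's step. The problems are in the remaining two parts.

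In~\ref{lem:general-Feller-ass-c}, the claim that $U_{2t}$ is \enquote{a composition of two operators in $L^2\to L^\infty$, hence Hilbert--Schmidt with an $L^2(\mu\otimes\mu)$-kernel} does not work. One cannot compose two maps $L^2\to L^\infty$; the correct factorization is $U_t\colon L^1\to L^2$ (by duality from $U_t^*\colon L^2\to L^\infty$) followed by $U_t\colon L^2\to L^\infty$, which yields boundedness $L^1\to L^\infty$, i.e.\ an $L^\infty(\mu\otimes\mu)$-kernel, not an $L^2$-kernel. Moreover, in this lemma $V$ is only locally bounded and bounded below -- it is \emph{not} assumed confining -- and $\mu$ is infinite; taking $V\equiv 0$ and $P_t$ the Gaussian semigroup on $\R^d$ shows that $U_{2t}$ need not even be compact, let alone Hilbert--Schmidt. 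So your route to the kernel in \eqref{eq:kernel} breaks down as written; it can be repaired via the Dunford--Pettis representation of bounded operators $L^1\to L^\infty$, or, as the paper does, by observing that $U_t, U_t^*$ map $L^2(M,\mu)$ into $L^2(M,\mu)\cap C_b(M)$ and invoking the kernel theorem \cite[Theorem 1.6]{JSch2006}; adjointness then gives $u_t^*(x,y)=u_t(y,x)$.

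In~\ref{lem:general-Feller-ass-b}, the paper simply cites Chung \cite[Theorem 2]{bib:Ch1986}, whose hypotheses hold because $V$ is bounded below and locally bounded. Your truncation $V^{(n)}=(V_+\wedge n)-V_-$ plus Dyson--Phillips strategy is viable in principle, but the step carrying the whole weight is only asserted, and the mechanism you name for it is not the right one. Continuity of the limit $U_tf$ requires the convergence $U_t^{(n)}f\to U_tf$ to be uniform on compacta; the error is bounded by $\|f\|_\infty e^{t\|V_-\|_\infty}\,\Pp^x\left(\exists\, s\le t:\ X_s\in\{V_+>n\}\right)\le \|f\|_\infty e^{t\|V_-\|_\infty}\,\Pp^x(\tau_K\le t)$ once $n$ exceeds the (essential) supremum of $V_+$ on the compact set $K$, and what is needed is $\sup_{x\in C}\Pp^x(\tau_K\le t)\to 0$ as $K\uparrow M$ for every compact $C$. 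This locally uniform compact-containment estimate for the conservative Feller process is the crux; it is not a consequence of the strong Feller property of $P_t$ (which only enters the bounded-potential Dyson--Phillips terms), nor of the pointwise statement $\Pp^x(\tau_{K_n}>t)\uparrow 1$ that you invoke, and it needs its own proof or citation. This is precisely the technical work that the appeal to Chung's doubly-Feller theorem is designed to avoid.
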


\begin{proof}
\ref{lem:general-Feller-ass-a} Let $0\not\equiv f \in L^2(M,\mu)$ be positive and $t, r >0$. We have
\begin{gather*}
    U_t f(x) = \ex^x\left[e^{-\int_0^t V(X_s) \,ds} f(X_t) \right] \geq e^{-t \left\|\I_{B_r(x)}V_{+}\right\|_{\infty}} \ex^x \left[f(X_t); t < \tau_{B_r(x)}\right].
\end{gather*}
The expectation appearing on the right-hand side tends to $\ex^x f(X_t) = P_tf(x)$ as $r \to \infty$. Since $P_t$ is positivity improving, and $V$ is locally bounded, the expression on the right-hand side is strictly positive for $\mu$-almost every $x$ and sufficiently large $r>0$. To see this, fix any $x$ such that $P_t f(x)>0$. Since $\ex^x \left[f(X_t); t < \tau_{B_r(x)}\right]$ increases to $P_t f(x) = \Ee\left[f(X_t)\right]$ as $r\uparrow\infty$, there is some $r=r(x)$ such that $\ex^x \left[f(X_t); t < \tau_{B_r(x)}\right]>0$, hence $U_t f(x)>0$. This gives the assertion.

Part~\ref{lem:general-Feller-ass-b} follows directly from Chung \cite[Theorem 2]{bib:Ch1986}. The assumptions (a)--(c) of Chung's theorem can be easily verified as $V$ is both bounded below and locally bounded.

We still have to prove~\ref{lem:general-Feller-ass-c}. First, observe that for every $t>0$ the operators $U_t, U^*_t: L^2(M,\mu) \to L^{\infty}(M,\mu)$ are bounded. Indeed, for $f \in L^2(M,\mu)$ and $t>0$ we have
\begin{gather*}
    U_t f(x)
    = \ex^x \left[e^{-\int_0^t V(X_s) ds} f(X_t) \right]
    \leq e^{t \left\|V_{-}\right\|_{\infty}}  \ex^x f(X_t)
    = e^{t \left\|V_{-}\right\|_{\infty}} P_t f(x),
\end{gather*}
implying that $\left\|U_t\right\|_{2,\infty} \leq e^{t \left\|V_{-}\right\|_{\infty}} \left\|P_t\right\|_{2,\infty}$.
In the same way we get $\left\|U^*_t\right\|_{2,\infty} \leq e^{t \left\|V_{-}\right\|_{\infty}} \left\|P^*_t\right\|_{2,\infty}$. Combining this with part
~\ref{lem:general-Feller-ass-b} and the semigroup property, we see that for every $t>0$ the operators $U_t, U^*_t$ map $L^2(M,\mu)$ $\left\{P_t, t\geq 0\right\}$,  $\left\{\widehat P_t, t\geq 0\right\}$ into $L^2(M,\mu) \cap C_b(M)$, and we can apply \cite[Theorem 1.6]{JSch2006} to show that $U_t, U^*_t$, $t>0$, are integral operators. That is, there exist non-negative measurable kernels $u_t(x,y)$ and $u_t^*(x,y)$ such that
\begin{gather*}
    U_t f(x) = \int_M f(y) u_t(x,y) \mu(dy), \quad U^*_t f(x) = \int_M f(y) u^*_t(x,y) \mu(dy), \quad f \in L^2(M,\mu), \; t >0.
\end{gather*}
Since the operators $U_t$ and $U^*_t$ are adjoint, we have $u^*_t(x,y) = u_t(y,x)$.
\end{proof}

Now we give general sufficient conditions for the compactness of the operators $U_t$, $U^*_t$, $t>0$ (cf.\ assumption \eqref{A0}) and for the finiteness of the heat content for large times. Recall that the heat content is defined as
\begin{gather*}
    Z(t) = \int_M U_t \I(x) \mu(dx) = \int_M U^*_t \I(y) \mu(dy), \quad t>0.
\end{gather*}

\begin{lemma} \label{lem:general-heat-content} For $x \in M$ and $t>0$, we have
	\begin{align} \label{eq:mass_est}
        e^{-t \cdot \ess_{y}\left(\I_{B_1(x)}(y) V_+(y)\right)} \, \pr^x(t \leq \tau_{B_1(x)}) \leq U_t \I(x) \leq \frac{1}{t} \int_0^t P_s (e^{-tV(\cdot)})(x) ds
	\end{align}
    \textup{(}an obvious modification gives similar estimates for $U^*_t \I(x)$\textup{)}. In particular,
    \begin{align} \label{eq:hc_est}
        \inf_{x \in M} \pr^x(t \leq \tau_{B_1(x)}) \int_M e^{-t \cdot \ess_{y}\left(\I_{B_1(x)}(y) V_+(y)\right)} \mu(dx) \leq Z(t) \leq \int_M e^{-tV(x)} \mu(dx). 
	\end{align}
    for $t>0$. Consequently, the following assertions hold.
    \begin{enumerate}
    \item\label{lem:general-heat-content-a}
     If
     \begin{gather} \label{eq:conf}
        \lim_{d(x,x_0) \to \infty} V(x) = \infty,
     \end{gather}
     then the operators $U_t$, $U^*_t$, $t>0$, are compact; in particular, \eqref{A0} holds.
		
    \item\label{lem:general-heat-content-b}
		Fix $t>0$. Then:
    \begin{gather*}
        \int_{M} e^{-t V(x)}\mu(dx) < \infty \implies Z(t)<\infty.
    \end{gather*}
    On the other hand, if $\inf_{x \in M} \pr^x(t \leq \tau_{B_1(x)}) >0$, then:
    \begin{gather*}
        Z(t)<\infty \implies \int_M e^{-t \cdot \ess_{y}\left(\I_{B_1(x)}(y) V_+(y)\right)} \mu(dx) < \infty.
    \end{gather*}
		\end{enumerate}
\end{lemma}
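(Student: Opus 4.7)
The plan is to derive first the pointwise sandwich \eqref{eq:mass_est}, integrate it to obtain \eqref{eq:hc_est} (from which (b) is immediate), and then deduce (a) by verifying the strong decay condition \eqref{A4} so that Lemma~\ref{lem:a4_gives_a0} can be invoked.

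\textbf{Pointwise bounds.} For the lower bound I restrict the Feynman--Kac expectation to the event $\{t<\tau_{B_1(x)}\}$; on this event $X_s\in B_1(x)$ for Lebesgue-a.e.\ $s\in[0,t]$, so $V_+(X_s)\leq M_x:=\ess_y\,\I_{B_1(x)}(y)V_+(y)$, and because $V_-\geq 0$ we get $\int_0^t V(X_s)\,ds\leq tM_x$, hence $U_t\I(x)\geq e^{-tM_x}\pr^x(t\leq\tau_{B_1(x)})$. For the upper bound I apply Jensen's inequality to the convex function $v\mapsto e^{-tv}$ and the time average $\frac 1t\int_0^t V(X_s)\,ds$, obtaining $e^{-\int_0^t V(X_s)\,ds}\leq \frac 1t\int_0^t e^{-tV(X_s)}\,ds$; Fubini and $\Ee^x e^{-tV(X_s)}=P_s(e^{-tV})(x)$ give the claimed bound. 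The estimates for $U_t^*\I$ follow by repeating the argument with the dual process $\widehat X$.

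\textbf{Heat content and part (b).} The lower bound in \eqref{eq:hc_est} is obtained by pulling $\inf_x\pr^x(t\leq\tau_{B_1(x)})$ out of the integral. For the upper bound I integrate and use the weak duality $\int P_s g\,d\mu=\int g\cdot\widehat P_s\I\,d\mu\leq\int g\,d\mu$, which is valid because $\widehat P_s$ is a sub-Markovian contraction; this gives $\frac 1t\int_0^t\int P_s(e^{-tV})\,d\mu\,ds\leq\int e^{-tV}\,d\mu$. Part (b) is then a direct reading of \eqref{eq:hc_est}.

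\textbf{Part (a): compactness via \eqref{A4}.} Lemma~\ref{lem:general-Feller-ass} already establishes \eqref{A1}--\eqref{A3}, with \eqref{A3} valid for every $t_0>0$, so by Lemma~\ref{lem:a4_gives_a0} it suffices to show \eqref{A4} for every $t_0>0$. I will prove the stronger property $U_t\I(x)\to 0$ (and, symmetrically, $U_t^*\I(x)\to 0$) as $d(x,x_0)\to\infty$, for every fixed $t>0$. By the upper bound in \eqref{eq:mass_est} and dominated convergence (the integrand is bounded by $e^{t\|V_-\|_\infty}$ uniformly in $s$), it is enough to show $P_s(e^{-tV})(x)\to 0$ as $d(x,x_0)\to\infty$ for each $s\in(0,t]$.

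\textbf{The main obstacle} is that $V$ is only locally bounded, so $e^{-tV}$ is merely bounded and measurable, not in $C_\infty(M)$, and the Feller property cannot be applied directly. To circumvent this I fix $\epsilon>0$ and use the confining assumption \eqref{eq:conf} to pick a compact set $K\subset M$ with $e^{-tV(y)}\leq\epsilon$ for all $y\notin K$, so that
\begin{gather*}
    P_s(e^{-tV})(x)\leq \|e^{-tV}\|_\infty\, P_s\I_K(x)+\epsilon.
\end{gather*}
Then I sandwich $\I_K\leq\phi_n\in C_c(M)\subset C_\infty(M)$ with $\phi_n\downarrow\I_K$ pointwise (possible because $M$ is locally compact Polish). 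Since $\{P_s\}$ is Feller, $P_s\phi_n\in C_\infty(M)$, and from $P_s\I_K\leq P_s\phi_n$ I conclude $\limsup_{d(x,x_0)\to\infty} P_s\I_K(x)=0$; hence $\limsup_{d(x,x_0)\to\infty} P_s(e^{-tV})(x)\leq\epsilon$, and as $\epsilon$ was arbitrary this proves $P_s(e^{-tV})(x)\to 0$ at infinity. The same reasoning for $\widehat X$ handles $U_t^*\I$, completing the verification of \eqref{A4} and, through Lemma~\ref{lem:a4_gives_a0}, the proof of (a).
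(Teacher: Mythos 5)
Your argument is correct and, for the sandwich \eqref{eq:mass_est} (Jensen + Tonelli for the upper bound, restriction to $\{t\leq\tau_{B_1(x)}\}$ for the lower bound), the integrated estimate \eqref{eq:hc_est} (Tonelli plus sub-Markovian duality), and part~\ref{lem:general-heat-content-b}, it coincides with the paper's proof. The place where you take a genuinely different route is the verification of \eqref{A4} in part~\ref{lem:general-heat-content-a}. The paper explicitly manufactures a single function $g\in C_\infty(M)$ majorizing $e^{-tV}$ $\mu$-a.e.\ (via mollifying the radial decreasing rearrangement $h(r)=\essi_{d(x,x_0)\geq r}V_+(x)$), and then observes that $U_t\I\leq\frac1t\int_0^t P_sg\,ds\in C_\infty(M)$ by the Feller property. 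You instead avoid constructing any majorant at once: for each $\epsilon>0$ you split $e^{-tV}$ into a compact part and an $\epsilon$-small tail, bound $\I_K$ by a single $\phi\in C_c(M)$, use the Feller property on $\phi$, and then pass $\epsilon\downarrow0$; dominated convergence in $s$ then handles the time integral. Both arguments are standard and deliver the same conclusion; yours is slightly more elementary in that it sidesteps the mollification construction (the paper's footnote), while the paper's has the mild advantage of exhibiting $U_t\I$ directly as being dominated by a fixed $C_\infty(M)$ function (a small minor remark: you introduce a decreasing sequence $\phi_n\downarrow\I_K$ but in fact use only a single majorant $\phi\geq\I_K$, since $P_s\I_K\leq P_s\phi\in C_\infty(M)$ already suffices).
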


\begin{proof}
Fix $t>0$. For the proof of the upper bound in \eqref{eq:mass_est} it is enough to use Jensen's inequality and Tonelli's theorem
\begin{gather*}
    U_t \I(x)
    = \ex^x\left[e^{-\int_0^tV(X_s)\,ds}\right]
    \leq \ex^x\left[\frac{1}{t} \int_0^t e^{-t V(X_s)}\,ds\right]
    = \frac{1}{t} \int_0^t P_s (e^{-tV(\cdot)})(x) \, ds
\end{gather*}
and the upper estimate in \eqref{eq:hc_est} follows by integration with respect to $\mu(dx)$ and duality. The lower bounds in \eqref{eq:mass_est} and \eqref{eq:hc_est} follow from the definitions.
Observe that Part~\ref{lem:general-heat-content-b} follows directly from the latter inequality.

To get Part~\ref{lem:general-heat-content-a}, we show \eqref{A4} for every $t_0>0$ and then we use Lemma \ref{lem:a4_gives_a0}. By \eqref{eq:conf} there exists a function $g \in C_{\infty}(M)$ such that
\begin{align} \label{eq:cpt_aux_1}
    e^{-tV(x)} \leq g(x),\quad\text{for $\mu$ almost every $x\in M$.}\footnotemark
\end{align}
\footnotetext{Indeed: fix an arbitrary $x_0 \in M$, set $h(r) := \essi_{\left\{x \in M: \, d(x,x_0) \geq r \vee 0\right\}} V_+(x)$ for $r \in \R$, and use a continuous function $\phi\geq 0$ with support in $[0,1]$ and $\int_0^1 \phi(s) \, ds = 1$ to get $h_{\phi}(r) := h*\phi(r) \leq h(r+1)$, $r \in \R$. Then $V_+(x) \geq h(d(x,x_0)) \geq h_{\phi}(d(x,x_0)-1)$, $x \in M$. Since $t$ is fixed, $g(x) := \exp \big(t \big(\left\|V_-\right\|_{\infty} - h_{\phi}(d(x,x_0)-1)\big)\big)$ is a continuous function which vanishes at infinity.}
Hence, by \eqref{eq:mass_est} and \eqref{eq:cpt_aux_1},
\begin{gather*}
0 \leq U_t \I(x) \leq \frac{1}{t} \int_0^t P_s g (x) ds, \quad x \in M.
\end{gather*}
By the Feller property, the function $x \mapsto (1/t)\int_0^t P_s g (x)$ ds belongs to $C_{\infty}(M)$, which implies that \eqref{A4} holds for any $t\geq t_0$ and any $t_0>0$.

The proof of \eqref{A4} for adjoint operators $U_t^*$ is similar.
\end{proof}
In particular, Lemma~\ref{lem:general-heat-content}.\ref{lem:general-heat-content-b} shows that the finiteness of the heat content is completely independent of the free process.

\bigskip

We will now discuss some concrete classes of Feller processes which are covered by the framework described above. In order to keep the exposition short, we give in each case standard references for the notation and further results.

\subsubsection{L\'evy processes} \label{sec:Levy}
Standard references: Sato~\cite{bib:Sat}, Jacob~\cite[Vol.~1]{bib:J}, Schilling~\cite{bib:barca}.
Throughout this section, the reference measure $\mu$ is $d$-dimensional Lebesgue measure and $d(\cdot,\cdot)$ is the Euclidean distance.

A \emph{L\'evy process} $(X_t)_{t\geq 0}$ is a stochastic process on $\real^d$ with c\`adl\`ag (right-continuous with finite left limits) paths and stationary and independent increments. A L\'evy process can be described in terms of its characteristic exponent $\psi(\xi) = -\log \Ee e^{i\xi\cdot X_1}$. It is known that $\psi$ is uniquely determined by the L\'evy--Khintchine formula
\begin{gather}\label{eq:lkf}
  \psi(\xi)
  =  - i \xi \cdot b + \frac 12 \xi \cdot Q \xi  + \int_{\real^d \setminus \left\{0\right\}} \left(1 - e^{i \xi \cdot y} + i \xi \cdot y \I_{(0,1)}(|y|)\right)\nu(dy) , \quad \xi \in \real^d,
\end{gather}
or by the corresponding the L\'evy triplet $(b,Q,\nu)$, which is given by $b \in \real^d$ (\emph{drift term}), $Q$ is a symmetric, positive semidefinite $d \times d$ matrix (\emph{Gaussian covariance matrix}) and $\nu$ is a measure on $\real^d \setminus \left\{0\right\}$ such that $\int_{\real^d \setminus \left\{0\right\}} (1 \wedge |x|^2) \nu(dx) < \infty$ (\emph{L\'evy measure}).

L\'evy processes are Markov processes, which are invariant under translations, i.e.\ the transition semigroup is a convolution semigroup of the form
\begin{gather*}
    P_t f(x) = \Ee^x \left[f(X_t)\right] = \Ee^0\left[f(X_t+x)\right] = f*\widetilde\mu_t(x)
\end{gather*}
where $\widetilde\mu_t(dy) = \Pp^0(X_t \in -dy)$ is the law of $-X_t$. The dual process $(\widehat{X}_t)_{t \geq 0}$ of a L\'evy process  $(X_t)_{t \geq 0}$ is given by $\widehat{X}_t = - X_t$, and its transition semigroup $\big\{ \widehat P_t:t \geq 0\big\}$ is defined accordingly. The convolution structure of $P_t$ shows that $(P_t)_{t\geq 0}$ is both a Feller semigroup on $C_\infty(\real^d)$ and an $L^p(\real^d,dx)$ Markov semigroup for any $p\in [1,\infty)$.

The process $(X_t)_{t \geq 0}$ is a strong Feller process, i.e.\ $P_t$ maps $L^{\infty}(\real^d,dx)$ into $C_b(\real^d)$, if and only if $\mu_t(dy) = p_t(y)\,dy$ is absolutely continuous w.r.t.\ Lebesgue measure, cf.\ Jacob~\cite[Vol.~1, Lemma 4.8.19, 4.8.20]{bib:J}; in particular, the dual process is strong Feller, too. A sufficient condition for the existence of the densities $p_t(x)$, $t>0$, is that $Q  \neq 0$ or that $\nu$ is infinite and absolutely continuous with respect to Lebesgue measure, see e.g.~\cite[Theorem~27.7]{bib:Sat}, see also the discussion in \cite{bib:KSch}. If, in addition, there is some $t_0>0$ such that $\int_{\real^d} e^{-t_0 \Re \psi(\xi)}\, d\xi < \infty$, then $\sup_{t \geq t_0} \sup_{x \in \real^d} p_t(x) < \infty$, thanks to the Fourier inversion formula. Consequently, the operators $P_t, \widehat P_t:L^2(\real^d,dx) \to L^{\infty}(\real^d,dx)$ are bounded for every $t \geq t_0$. In general, if the integral $\int_{\real^d} e^{-t_0 \Re \psi(\xi)}\, d\xi$ is finite for every $t_0>0$, then we get the existence of bounded and jointly (in $(t,x)$) continuous densities for every $t>0$, see \cite[Lem.~2.1]{bib:KSch}.

In order to see that $P_t$ and $\widehat P_t$ are positivity improving, it is enough to show that $\pr^0(X_t \in B) = \pr^0(\widehat{X}_t \in -B) = \int_B p_t(x)\,dx >0$ for every Borel set $B \subset \real^d$ with $\mathrm{Leb}(B)>0$. Let us fix such a Borel set $B$.
Using the L\'evy--Khintchine formula, we can decompose a L\'evy process into a sum of independent L\'evy processes $X_t = W_t + Y_t$ whose triplets are $(b,Q,0)$ and $(0,0,\nu)$. If $\mathrm{det}(Q)\neq 0$, then we always have a nondegenerate Gaussian part, and it is clear that
\begin{gather*}
    \Pp^0(X_t\in B) = \int \Pp^0(W_t + y\in B) \,\Pp^0(Y_t\in dy) > 0
\end{gather*}
since $\Pp^0(W_t + y \in B)>0$ for all $y\in\real^d$.

Let us assume that $\mathrm{det}(Q)=0$. If the L\'evy measure $\nu$ satisfies $\nu(dy) \geq h(y)\,dy$ with a density $h(y)$  which is strictly positive in some neighbourhood of $0\in\real^d$, say $B_\epsilon(0)$, then we can decompose $X_t = Z_t + C_t$ into two independent L\'evy processes whose triplets are $(b,Q,\nu(dy)-h(y)\,dy)$ and $(0,0,h(y)\,dy)$.  Since $C_t$ is a compound Poisson process, we know from \cite[Cor.~3.5]{bib:barca} that
\begin{gather*}
    \Pp^0(C_t \in B) = e^{-h t}\delta_0(B) + e^{-h t} \sum_{k=1}^\infty \frac{t^k}{k!} \int_B h^{*k}(x)\,dx,
    \quad
    h = \int_{\real^d} h(x)\,dx.
\end{gather*}
Since the $k$-fold convolution $h^{*k}(x)$ is strictly positive in $B_{\epsilon k}(0)$, we conclude that $\Pp^0(C_t \in B+z)>0$ for any $z\in\real^d$, hence $\Pp^0(X_t\in B) = \int \Pp^0(C_t + z\in B)\,\Pp^0(Z_t\in dz)>0$. With a little more effort, the above argument even shows that the density of $X_t$ satisfies $p_t(x)>0$ for all $t>0$ and Lebesgue a.a.\ $x\in\real^d$.

It remains to check the conditions for compactness and the finiteness of the heat content stated in Lemma~\ref{lem:general-heat-content}. Let $x_0 = 0$.
Choosing the potential $V$ in such a way that $V(x) \to \infty$ as $|x| \to \infty$, we obtain that the Feynman--Kac semigroup operators $U_t, \widehat U_t$, $t>0$, are compact operators on $L^2(M,\mu)$. Moreover, if we know that
\begin{gather*}
    \liminf_{|x| \to \infty} \frac{V(x)}{\log|x|} > 0,
\end{gather*}
then there is $t>0$ such that $Z(t)<\infty$.

\subsubsection{Levy-type processes.} Standard references: Jacob~\cite[Vol.~1]{bib:J}, B\"{o}ttcher \emph{et al.}~\cite{bib:matters}.
A L\'evy-type or Feller process is a Markov process $X_t$ with values in $\real^d$ which behaves locally like a L\'evy process. In particular, the transition semigroup $P_tu(x) = \Ee^x u(X_t)$ is a Feller semigroup and one can show that the infinitesimal generator is a pseudo-differential operator of the form $Au(x) = \mathcal -F^{-1}_{\xi\to x} \left(p(x,\xi)\mathcal Fu(\xi)\right)$ ($\mathcal F$ denotes the Fourier transform, $u$ is a sufficiently regular function, e.g.\ a test function $u\in C_c^\infty(\real^d)$) whose symbol $p(x,\xi)$ is, for fixed $x$, the characteristic exponent of a L\'evy process. Therefore, $p(x,\xi)$ is given by a L\'evy--Khintchine formula \eqref{eq:lkf} where the L\'evy-triplet depends on $x$, i.e.\ $(b,Q,\nu(dy)) = (b(x),Q(x),\nu(x,dy))$. Thus, $A$ is a \enquote{L\'evy generator with variable coefficients}. A typical example are stable-like generators of the form $Au(x) = (-\Delta)^{\alpha(x)}u(x)$ where $\alpha:\real^d \to [0,2]$ is a variable order of differentiability and the symbol is of the form $p(x,\xi) = |\xi|^{\alpha(x)}$

It is a major problem to construct a L\'evy-type process given an admissible symbol $p(x,\xi)$ or, which is the same, an $x$-dependent L\'evy triplet. A thorough discussion is in Jacob~\cite[Vol.~3]{bib:J} and B\"{o}ttcher \emph{et al.}~\cite{bib:matters}, probably the most general results to-date are due to K\"{u}hn~\cite{bib:kuehn} and Knopova \emph{et al.} \cite{bib:KKS}.

In what follows we indicate some conditions, under which the assumptions needed in Section~\ref{sec:F-K-general} are satisfied. Our main intention is not to cover the most general situation, but to illustrate that within the class of L\'evy-type processes there are many examples to be found. The following assumptions on the symbol are frequently used
\begin{align}
\label{ltp-1}
    C_1 |\xi|^{\alpha_0}  &\leq \mathrm{Re} p(x,\xi) &&\forall x\in\real^d,\; |\xi|\geq 1, &&\text{(minimal growth)}\\
\label{ltp-2}
    |p(x,\xi)-p(y,\xi)| &\leq C_2 |x-y|^\gamma |\xi|^\alpha &&\forall x\in\real^d,\; |\xi|\geq 1, &&\text{(H\"{o}lder condition)}\\
\label{ltp-3}
    |p(x,\xi)| &\leq C_3 (1+|x|^\gamma) |\xi|^\beta &&\forall x\in\real^d,\; |\xi|\leq 1, &&\text{(growth of coefficients)}
\end{align}
where $C_1,C_2,C_3$ are constants and $\alpha_0,\alpha,\beta\in (0,2]$ and $\gamma\in (0,\beta)$ are suitable exponents.

Assume that $p(x,\xi)$ is the symbol of a L\'evy-type process. If either $\alpha \geq \gamma > \frac{\alpha}{\alpha_0}\left(\alpha-\alpha_0+\frac 12\right)$ or $\alpha \leq \gamma < \frac 2\alpha_0-\frac 12$, then $X_t$ has a transition density $p_t(x,y)$ such that $\sup_x \int p_t^2(x,y)\,dy<\infty$, i.e.\ \eqref{A3} holds for the free semigroup $P_t$, see \cite[Thm.~2.14]{bib:kuehn}.

The existence of a L\'evy-type process with a given symbol $p(x,\xi)$ is a much more delicate issue. Here, we content ourselves to mentioning that in the stable-like case $p(x,\xi) = |\xi|^{\alpha(x)}$ where $\alpha : \real^d \to [a_1,a_2] \subset (0,2)$ is H\"{o}lder continuous, the existence of a stable-like process with a continuous density $(t,x,y)\mapsto p_t(x,y) > 0$ has been established in \cite[Thm.~3.8, Thm.~3.11]{bib:kuehn} as well as in \cite[Section~3]{bib:KKS}. In particular, we get Feller and strong Feller processes. It is also clear that stable-like symbols satisfy the assumptions \eqref{ltp-1}--\eqref{ltp-3}. We would like to mention, that the methods presented in these papers go way beyond the stable-like case, \cite{bib:kuehn} covers symbols of the type $p(x,\xi) = \psi_{a(x)}(|\xi|)$ where $z\mapsto \psi_a(z)$ has a holomorphic extension on a bow-tie shaped domain of the complex plane (and satisfies certain polynomial boundedness assumptions) while \cite{bib:KKS} are mainly interested in non-rotationally symmetric symbols. Therefore \eqref{A1} and \eqref{A2} are also satisfied.

In general, it is very difficult to determine the symbol of the (formal) adjoint to $A$. Building on \cite{bib:sch-uem}, the paper \cite{bib:sch-wan} contains the general form of the symbol of the adjoint operator for a wide class of symbols (Section 3) and a fully-worked out discussion of the case $p(x,\xi)=|\xi|^{\alpha(x)}$. If $\alpha(x)$ is H\"{o}lder, takes values in $[a_1,a_2]\subset (0,2)$ and satisfies \eqref{ltp-1}--\eqref{ltp-3}, then so does the adjoint symbol $p^*(x,\xi)$ which is given by a L\'evy--Khintchine formula with triplet $(0,0,w(x)|x-y|^{-d-\alpha(x)}\,dy)$ for the weight $w(x) = \alpha(x)2^{\alpha(x)-1}\Gamma\left(\frac 12\alpha(x)+\frac 12d\right)\big/\Gamma\left(1-\frac 12\alpha(x)\right)$. This means that the assumptions \eqref{A1}--\eqref{A3} also hold for the adjoint free semigroup $\widehat P_t$ in this case. Of course, other symbols require case-by-case investigations, e.g.\ based on the above-mentioned literature.

\subsubsection{Continuous-time Markov chains on a countably infinite uniformly discrete metric space.}
Standard references: Murugan \& Saloff--Coste \cite{bib:MS-C1,bib:MS-C2} and Cygan, Kaleta \& \'Sliwi\'nski \cite{bib:CKS}.
Let $(M,d)$ be a countably infinite, unbounded, complete metric space, which is also \emph{uniformly discrete}; this means that there exists a number $a>0$ such for any two distinct points $x,y \in M$ one has $d(x,y) \geq a$, see \cite{bib:MS-C1,bib:MS-C2}. The underlying measure $\mu:M \to (0,\infty)$ is assumed to be a \emph{Frostman measure} with exponent $d_M>0$, i.e.\ there is a constant $c>0$ such that $\mu(B_r(x)) \leq c r^{d_M}$ for any $x \in M$ and $r>0$.

In particular, we have for all $\gamma>d_M$
\begin{align*}
    \int_{B_1(x_0)^c} d(x,x_0)^{-\gamma}\, \mu(dx)
    & = \sum_{n=0}^{\infty} \int_{2^n \leq d(x,x_0) < 2^{n+1}} d(x,x_0)^{-\gamma}\, \mu(dx) \\
    & \leq \sum_{n=0}^{\infty} 2^{-\gamma n} \mu(B_{2^{n+1}}(x_0)) \leq c 2^{d_M}\sum_{n=0}^{\infty} 2^{-(\gamma-d_M) n}
    < \infty.
\end{align*}
Under these assumptions, we can give a sufficient condition for the integrability of the function $x \mapsto e^{-tV(x)}$ which is needed for the finiteness of the heat content, see Lemma~\ref{lem:general-heat-content}.\ref{lem:general-heat-content-b}. Indeed, if there exist $c_0, R>0$ and $x_0 \in M$ such that
\begin{gather*}
    \frac{V(x)}{\log d(x,x_0)} \geq c_0 >0 , \quad d(x,x_0) \geq R,
\end{gather*}
then for every $t > d_M/c_0$ we have
\begin{gather*}
    \int_M e^{-tV(x)} \mu(dx) \leq e^{\left\|V_-\right\|_{\infty}} \mu(B_R(x_0)) + \int_{B_R(x_0)^c} d(x,x_0)^{-c_0t} \mu(dx) < \infty.
\end{gather*}

Consider two probability kernels $Q, \widehat Q:M \times M \to [0,1]$, $\sum_{y \in M} Q(x,y) = 1=\sum_{y \in M} \widehat Q(x,y)$, $x \in M$, which are connected via the duality relation with respect to the measure $\mu$, i.e.\
\begin{align}\label{eq:duality_rel}
    \mu(x) Q(x,y) = \mu(y) \widehat Q(y,x), \quad x,y \in M.
\end{align}
Thus, there are two time-homogeneous Markov chains $\{Z_n: n \in \N_0\}$, $\{\widehat Z_n: n \in \N_0\}$ with values in $M$ and one-step transition probabilities given by $Q$ and $\widehat Q$, respectively. We denote by $\pr^x$, $\widehat \pr^x$ the laws of the chains starting at $x \in M$, i.e.\ $\pr^x (Z_n=y) = \pr(Z_n=y \mid Z_0=x)$ and $\widehat \pr^x (\widehat Z_n=y) = \pr(\widehat Z_n=y \mid \widehat Z_0=x)$. We have $\pr^x (Z_{n}=y) = Q_n(x,y)$ and $\widehat \pr^x (\widehat Z_{n}=y) = \widehat Q_n(x,y)$, where $Q_0(x,y) = \I_{\left\{x\right\}}(y)$, $Q_1(x,y)=Q(x,y)$ and $Q_{n+1}(x,y) = \sum_{z \in M} Q_n(x,z)Q_1(z,y)$, $n \geq 1$, and $\widehat Q_n$'s are defined accordingly. The densities with respect to $\mu$ are given by
\begin{gather*}
    q_n(x,y) = \frac{\pr^x (Z_{n}=y)}{\mu(y)} = \frac{Q_n(x,y)}{\mu(y)}
    \quad \text{and} \quad
    \widehat q_n(x,y) = \frac{\widehat \pr^x (\widehat Z_{n}=y)}{\mu(y)} = \frac{\widehat Q_n(x,y)}{\mu(y)}.
\end{gather*}
Clearly, \eqref{eq:duality_rel} extends to $\mu(x) Q_n(x,y) = \mu(y) \widehat Q_n(y,x)$, $x,y \in M$, $n \in \N$, and we have $\widehat q_n(x,y) = q_n(y,x)$.
The corresponding $n$-step transition operator is
\begin{gather*}
    Q_n f(x) = \sum_{y \in M} Q_n(x,y) f(y) = \sum_{y \in M} q_n(x,y) f(y) \mu(y)
\end{gather*}
whenever the series is finite; $\widehat Q_n f$ is defined in a similar way.

The main objects are the continuous-time chains $\{X_t: t \geq 0\}$ and $\{\widehat X_t: t \geq 0\}$ defined by $X_t:= Z_{N_t}$, $\widehat X_t:= \widehat Z_{N_t}$, respectively, where $\{N_t: t \geq 0\}$
is an independent Poisson process with parameter $\lambda=1$. Clearly, we have $\pr^x(X_t \in A) = \sum_{y \in A} P_t(x,y)$, $A \subset M$, where
\begin{gather*}
    P_t(x,y) = e^{-t}\sum_{n=0}^{\infty} \frac{t^n}{n!} Q_{n}(x,y), \quad t>0, \ x \in M.
\end{gather*}
The transition semigroup $\{P_t: t \geq 0\}$ of this process is defined as
\begin{gather*}
    P_t f(x) = \sum_{y \in M} f(y) P_t(x,y) = \sum_{y \in M} f(y) p_t(x,y) \mu(y), \quad \text{with} \quad p_t(x,y):=\frac{P_t(x,y)}{\mu(y)},
\end{gather*}
for all admissible functions $f$ on $M$. The transition probabilities $\widehat P_t(x,y)$, kernels $\widehat p_t(x,y)$ and the transition semigroup $\{\widehat P_t: t \geq 0\}$ of the process $\{\widehat{X}_t: t \geq 0\}$ are defined accordingly. By the Cauchy--Schwarz inequality we can easily see that all operators $P_t$ and $\widehat{P}_t$ are contractions on $L^2(M,\mu)$; again, we have $\widehat p_t(x,y) = p_t(y,x)$, which means that the weak duality relation, which was described at the beginning of Section \ref{sec:F-K-general}, is in force. Moreover, it is straightforward  to see that both transition semigroups are doubly Feller. In particular, the strong Feller property follows from the fact that $(M,d)$ is uniformly discrete. One can also check that the ultracontractivity of these semigroups is equivalent to the boundedness of the kernel $q_1(x,y)$, e.g.\ if $\inf_{x \in M} \mu(x)>0$. The positivity improving property follows from the strict positivity of the kernels $P_t(x,y)$ and $\widehat P_t(x,y)$. This is the case if the chains $\{Z_n: n \in \N_0\}$, $\{\widehat Z_n: n \in \N_0\}$ are irreducible, i.e.\ for every $x,y \in M$ there is $n=n(x,y) \in \N$ such that $P_n(x,y), \widehat P_n(x,y) >0$.

We can now explicitly estimate the space-rates of the quasi-ergodic behaviour of the Feynman--Kac semigroups with confining potentials (see Corollary \ref{cor:hcf_uqe}). These rates take the form $U_{t_1}\I_M(x)/\phi_0(x)$ and $U^*_{t_1}\I_M(x)/\psi_0(x)$, for some $t_1>0$. The functions $U_{t_1}\I_M(x)$ and $U^*_{t_1}\I_M(x)$ can be estimated by a constant or by using the upper bound in \eqref{eq:mass_est}. Sharper estimates for a large class of processes will be provided in \cite{bib:CKSS}. On the other hand, sharp lower estimates for the ground states for a fairly general class of processes and rather general confining potentials can be found in \cite{bib:CKS}.

If we have good lower estimates for the ground states, then we can apply Lemma \ref{lem:eta_def} and Theorem \ref{th:pGSD} to derive the progressive GSD and progressive exponential quasi-ergodicity from the finiteness of the heat content for large times. Such estimates can, e.g., be found in \cite{bib:CKS}.

\subsubsection{Continuous-time simple random walk on an infinite graph.}
Standard reference: Barlow~\cite{bib:Bar}.
This is a special case of the example in the previous section: Let $M$ be a countably infinite set and  $\Gamma=(M,E)$ be a connected and locally finite graph over $M$. We equip $M$ with its geodesic metric $d: M \times M \to \N_0$; the underlying measure and the transition probability of a simple random walk on the graph is defined in terms of a conductance network on $\Gamma$. The Frostman property of the measure $\mu = (\mu_x)_{x\in M}$ is, for example, guaranteed by the following condition: there is a constant $c_1>0$ such that $\mu_x \leq c_1$, $x \in M$, and the cardinality of a geodesic ball $B_n(x)$ is controlled by a power-type function, i.e.\ $\sup_{x\in M} \sup_{n\in\nat} n^{-d_M}|B_n(x)| <\infty$ for some $d_M>0$.

\subsubsection{Brownian motion on simple nested fractals.}
Standard references: Lindstr\o m \cite{bib:Lin}, Kusuoka \cite{bib:Kus}, Fukushima~\cite{bib:Fuk}, Kumagai \cite{bib:Kum}.
Let $M \subset \R^d$, $d \geq 1$, be an (unbounded) \emph{simple nested fractal} and let $(X_t)_{t \geq 0}$ be a Brownian motion with the state space $M$. We refer the reader to \cite{bib:Lin,bib:Kus,bib:Fuk} for the definition of a simple nested fractal and various constructions of Brownian motion thereon. The set $M$ inherits the Euclidean topology from $\R^d$, and the measure $\mu$ is the Hausdorff measure on $M$ (in particular, $\mu$ is a $d_M$-Frostman measure, where $d_M$ is the Hausdorff dimension of $M$). It is known that $(X_t)_{t \geq 0}$ is a Markov process with symmetric, jointly continuous and bounded transition densities that satisfy sub-Gaussian estimates \cite[Section 5]{bib:Kum}. Consequently, the transition semigroup of the process $(X_t)_{t \geq 0}$ is a doubly Feller, ultracontractive and positivity improving semigroup.

\subsubsection{Sobordinate processes}
Standard references: Sato~\cite{bib:Sat}, Schilling, Song \& Vondra\v{c}ek \cite{bib:SSV}.
Another class of processes which is covered by our results are subordinate processes.
Recall that a subordinator is an increasing L\'evy process with values in $[0,\infty)$, starting at $0$. It is uniquely determined by its Laplace transform: we have $\ex e^{-\lambda S_t} = e^{- t \phi(\lambda)}$, $t>0$, where the exponent $\phi:(0,\infty) \to [0,\infty)$ is a Bernstein function of the form
\begin{gather*}
    \phi(\lambda) = b  \lambda + \int_0^{\infty} (1- e^{-\lambda u}) \,\rho(du).
\end{gather*}
Here $b \geq 0$ is the drift term and $\rho$ is a measure on $(0,\infty)$ such that $\int_0^{\infty} (1 \wedge u) \rho(du) < \infty$ (L\'evy measure).

Let $\{X_t: t \geq 0\}$ and $\{\widehat X_t: t \geq 0\}$ be Markov processes as in Section \ref{sec:F-K-general}, and let $\{S_t: t \geq 0\}$ be a subordinator which is independent of $\{X_t: t \geq 0\}$ and $\{\widehat X_t: t \geq 0\}$. We define the subordinate processes $\{X^{\phi}_t: t \geq 0\}$, $\{\widehat X^{\phi}_t: t \geq 0\}$ by the formulas $X^{\phi}_t:= X_{S_t}$, $\widehat X^{\phi}_t:= \widehat X_{S_t}$, respectively. By independence, their transition semigroups
$\{P^{\phi}_t: t \geq 0\}$ and $\{\widehat P^{\phi}_t: t \geq 0\}$ are given by
\begin{gather*}
    P^{\phi}_t f(x) = \int_{[0,\infty)} P_s f(x) \,\pr(S_t \in ds),
    \quad
    \widehat P^{\phi}_t f(x) = \int_{[0,\infty)} \widehat P_s f(x) \,\pr(S_t \in ds),
\end{gather*}
for $f \in L^2(M,\mu)$ or $f \in L^{\infty}(M,\mu)$. It is obvious that these formulas define contractions on $f \in L^2(M,\mu)$, and the weak duality relation described in Section \ref{sec:F-K-general} holds. Moreover, these semigroups inherit the Feller property, the strong Feller property (provided that $S_t$ is not a compound Poisson subordinator), and the positivity improving property from the semigroups $\{P_t: t \geq 0\}$ and $\{\widehat P_t: t \geq 0\}$. The only regularity property that requires an additional assumption is ultracontractivity: Suppose that $P_t, \widehat P_t$ are bounded operators from $L^2(M,\mu)$ to $L^{\infty}(M,\mu)$ for every $t>0$. By Remark \ref{rem:discuss_ass}, this is equivalent to the boundedness from $L^1(M,\mu)$ to $L^{\infty}(M,\mu)$, for any $t>0$. Therefore, ultracontractivity of the subordinate semigroup and its dual is ensured by the assumption that
\begin{gather*}
    \left\|P^{\phi}_t\right\|_{1,\infty} \leq \int_{[0,\infty)} \left\|P_s\right\|_{1,\infty} \pr(S_t \in ds) < \infty, \quad t>0.
\end{gather*}

\subsection{Progressive uniform exponential quasi-ergodicity for the harmonic oscillator}\label{subsec:HO}

Let $H= -\Delta+|x|^2$ be the Schr\"odinger operator acting in $L^2(\real^d,dx)$, and denote by $U_t = e^{-tH}$, $t \geq 0$, the operators of the corresponding Schr\"odinger semigroup. It is well known that, for every $t>0$, $U_t$ is a compact, self-adjoint and positivity-improving integral operator with the kernel
\begin{gather} \label{eq:HO_kernel}
    u_t(x,y) = (2 \pi \sinh(2t))^{-d/2} \exp\left(-\frac{1}{4}\left(\tanh(t)|x+y|^2+ \coth(t)|x-y|^2\right)\right),
\end{gather}
see e.g.\ \cite[(1.4)]{bib:ST2005}. In particular, the assumptions \eqref{A0}--\eqref{A3} are satisfied. The ground state eigenfunction and eigenvalue are given by
\begin{gather} \label{eq:HO_gs}
    \phi_0(x) = \pi^{-d/4} e^{-\frac{|x|^2}{2}} \quad \text{and} \quad \lambda_0 = d,
\end{gather}
and (see \cite[Proposition 3.3]{bib:ST2005}\footnote{The statement of the result in this reference contains a minor typographical error:\  the factor $2\pi$ is missing before $\cosh(2t)$, the proof is, however, correct})
\begin{gather} \label{eq:HO_sem}
    U_t \I_{\real^d}(x) = (\cosh(2t))^{-d/2} \exp\left(-\frac{|x|^2}{2\coth(2t)}\right),\quad t >0.
\end{gather}
It is easy to check that the semigroup $\left\{U_t: t \geq 0 \right\}$ is not aGSD, cf.\ \cite[Example 4.4]{bib:KKL2018}; in particular, using Corollary~\ref{cor:agsd-euqe}, it is not uniformly quasi-ergodic, see also the discussion in \cite[Section 4.3]{bib:KP}. We will show that our present results can be directly used to describe the true quasi-ergodic properties of this semigroup.

First of all, observe that \eqref{eq:HO_sem} shows that the heat content $Z(t) = \int_{\real^d} U_t \I(x) dx$ is finite for every $t>0$, so that our results from Section~\ref{sec:content} apply. In particular, Theorem~\ref{th:hcf_and_eqe} ensures that the semigroup $\left\{U_t: t \geq 0 \right\}$ is exponentially quasi-ergodic for every finite initial distribution, and the limiting measure
\begin{gather*}
    m(dx) =  \frac{\phi_0(x)}{\left\|\phi_0\right\|_1}\, dx = \frac{1}{(2\pi)^{d/2}} e^{-\frac{|x|^2}{2}}\, dx
\end{gather*}
is the only quasi-stationary measure of $\left\{U_t: t \geq 0 \right\}$. By Corollary~\ref{cor:hcf_uqe} we also get a pointwise version with explicit space-rate
\begin{gather*}
    \frac{U_1\I_{\real^d}(x)}{\phi_0(x)} = \left(\frac{\sqrt{\pi}}{\cosh2}\right)^{d/2} \exp\left(-\frac{|x|^2}{e^{4}-1}\right),
\end{gather*}
which is uniform on compact sets.

We will show that the quasi-ergodicity is progressively uniform. Observe that the semigroup $\big\{U_t:t \geq 0\big\}$ is pGSD, cf.\ Definition~\ref{def:pGSD}.\ref{def:pGSD-b}.

\begin{proposition} \label{th:pgsd_ho}
Let $x \in \real^d$, $t>0$ and $C>0$. Then:
\begin{gather*}
    U_t \I_{\real^d}(x) \leq C e^{-\lambda_0 t} \phi_0(x)
    \quad\iff\quad
    |x| \leq \sqrt{\left(e^{4t}+1\right)\left(\log C - \frac{d}{2}\log (2 \sqrt{\pi}) + \frac{d}{2} \log (1+e^{-4t})\right)}.
\end{gather*}
In particular, if $C < (2\sqrt{\pi})^{d/2}$, the right-hand side is void for large $t$, and if $C = (2\sqrt{\pi})^{d/2}$, then the inequality on the right hand side defines a bounded set of $x$ for large $t$'s.
\end{proposition}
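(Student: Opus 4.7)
The proof is essentially a direct computation using the explicit formulas already recorded in the excerpt, so the plan is to set up the inequality, take logarithms, and simplify.

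The starting point is to substitute \eqref{eq:HO_sem}, \eqref{eq:HO_gs} and $\lambda_0 = d$ into the inequality $U_t \I_{\real^d}(x) \leq C e^{-\lambda_0 t}\phi_0(x)$. This gives
\begin{equation*}
    (\cosh 2t)^{-d/2}\exp\!\left(-\frac{|x|^2}{2\coth 2t}\right)
    \leq C\,\pi^{-d/4}e^{-dt}\exp\!\left(-\frac{|x|^2}{2}\right),
\end{equation*}
which is a pointwise inequality between strictly positive quantities, hence equivalent to the inequality of logarithms.

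Taking logarithms and collecting the $|x|^2$ terms on the left and the constants on the right, I rewrite the condition as
\begin{equation*}
    \frac{|x|^2}{2}\bigl(1 - \tanh 2t\bigr)
    \leq \log C - \tfrac{d}{4}\log \pi - dt + \tfrac{d}{2}\log(\cosh 2t).
\end{equation*}
Next I would use the two elementary identities $1-\tanh 2t = 2/(e^{4t}+1)$ and $\cosh 2t = \tfrac{1}{2}e^{2t}(1+e^{-4t})$; the first turns the left-hand side into $|x|^2/(e^{4t}+1)$, while the second gives $\tfrac{d}{2}\log\cosh 2t = dt + \tfrac{d}{2}\log(1+e^{-4t}) - \tfrac{d}{2}\log 2$. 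The $dt$ cancels, and the remaining constants combine via $\tfrac{d}{4}\log\pi + \tfrac{d}{2}\log 2 = \tfrac{d}{2}\log(2\sqrt{\pi})$, yielding
\begin{equation*}
    \frac{|x|^2}{e^{4t}+1}
    \leq \log C - \tfrac{d}{2}\log(2\sqrt{\pi}) + \tfrac{d}{2}\log(1+e^{-4t}),
\end{equation*}
which is exactly the claimed equivalence after multiplying through by $e^{4t}+1$ and taking square roots (with the understanding that the set of admissible $x$ is empty when the right-hand side is negative).

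For the final sentences of the proposition, I would just observe the large-$t$ asymptotics of
\[
    R(t) := \log C - \tfrac{d}{2}\log(2\sqrt{\pi}) + \tfrac{d}{2}\log(1+e^{-4t}).
\]
Since $\log(1+e^{-4t})\downarrow 0$ as $t\to\infty$, the sign of $R(t)$ for large $t$ is governed by the constant $\log C - \tfrac{d}{2}\log(2\sqrt{\pi})$: if $C<(2\sqrt{\pi})^{d/2}$ this constant is negative and hence $R(t)<0$ for all sufficiently large $t$, so the condition is void; if $C=(2\sqrt{\pi})^{d/2}$ then $(e^{4t}+1)R(t) = \tfrac{d}{2}(e^{4t}+1)\log(1+e^{-4t})\to d/2$, so the admissible set is a ball whose radius stays bounded. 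There is no real obstacle here — the only care needed is in tracking the constants through the logarithmic identities.
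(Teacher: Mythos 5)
Your proposal is correct and follows essentially the same route as the paper: substitute the explicit formulas \eqref{eq:HO_gs}, \eqref{eq:HO_sem}, take logarithms, and simplify using $1-\tanh 2t = 2/(e^{4t}+1)$ and $\cosh 2t = \tfrac12 e^{2t}(1+e^{-4t})$, with the same treatment of the two limiting cases for $C$. The only cosmetic difference is the order of operations (the paper first rewrites the exponential inequality, then takes logs), and your parenthetical remark about the empty set when the right-hand side is negative is exactly the intended reading.
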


\begin{proof}
Because of \eqref{eq:HO_gs} and \eqref{eq:HO_sem} the inequality on the left hand side reads
\begin{gather*}
    \exp\left[{|x|^2 \left(\frac{e^{-2t}}{e^{2t}+e^{-2t}}\right)}\right]
    \leq C \pi^{-\frac{d}{4}} \left(\frac{e^{2t}+e^{-2t}}{2}\right)^{\frac{d}{2}} e^{-dt}
    = C (2 \sqrt{\pi})^{-\frac{d}{2}} (1+e^{-4t})^{\frac{d}{2}},
\end{gather*}
which can be equivalently rewritten as
\begin{gather*}
    |x|^2 \leq \big(1+e^{4t}\big) \left(\log C - \frac{d}{2}\log (2\sqrt{\pi}) + \frac{d}{2} \log (1+e^{-4t})\right).
\qedhere
\end{gather*}
\end{proof}

Assume now that $C > (2\sqrt{\pi})^{d/2}$ and write
\begin{gather*}
    \rho(t):= K e^{2t} \sqrt{\left(1+e^{-4t}\right)\left(1 + \frac{d}{2K^2} \log (1+e^{-4t})\right)}, \quad t \geq 1,
\end{gather*}
where $K = \sqrt{\log\big(C/(2\sqrt{\pi})^{d/2}\big)}$. Clearly, we have $\rho(t) = K e^{2t} (1+o(1))$, as $t \to \infty$. It follows from Proposition~\ref{th:pgsd_ho} that the semigroup $\left\{U_t: t \geq 0 \right\}$ is pGSD with the exhausting family $\left\{K_t: t \geq 1\right\}$ where $K_t = \overline{B}_{\rho(t)}(0)$, i.e.
\begin{gather} \label{eq:pGSD_HO}
    U_t \I_{\real^d}(x) \leq C e^{-\lambda_0 t} \phi_0(x), \quad \text{for} \quad |x| \leq \rho(t), \ \ t \geq 1.
 \end{gather}
Therefore, we can apply Theorem~\ref{th:pGSD-to-puqe} to show that the progressive exponential uniform quasi-ergodicity holds on any space $L^p$; it is easily seen from \eqref {eq:HO_sem} that the assumption \eqref{A4} holds for every $t_0>0$. More precisely, for every $a \in (0,1)$ and $p \in [1,\infty]$ there are constants $C_1, C_2 >0$ such that
\begin{gather} \label{eq:peqe_HO}
    \sup_{\sigma \in \cM^1\big(\overline{B}_{\rho(at)}(0)\big)} \sup_{\substack{f \in  L^p(M,\mu)\\ \left\|f\right\|_p \leq 1}} \left|\frac{\sigma(U_t f)}{\sigma(U_t \I_{\real^d})} - m(f)\right|
    \leq C_1 e^{-C_2 t},
\end{gather}
for sufficiently large $t>1$.

It is interesting to note that one can use \eqref{eq:HO_kernel} and \eqref{eq:HO_sem} to verify the following: for a family of measures $\big\{\sigma_t\big\}_{t \geq 1}$ given by $\sigma_t = \delta_{x_t}$ with $x_t = (e^{2t}x_0)/2$ where $x_0 \in \R^d$ is fixed and for $f = \I_K$ where $K$ is a Borel set of finite Lebesgue measure we have
\begin{gather} \label{eq:nonuniform_HO}
    \frac{U_t f(x_t)}{U_t \I_{\real^d}(x_t)}
    =  \frac{\sigma_t(U_t f)}{\sigma_t(U_t \I_{\real^d})}
    \xrightarrow{t\to\infty} \frac{1}{(2\pi)^{d/2}}\int_K e^{\frac{(y-x_0)^2}{2}}\,dy.
	\end{gather}
This was first noticed in \cite[p.\ 135]{bib:KP} for $d=1$; the aim was to show that the quasi-ergodicity property for the harmonic oscillator is not uniform in $\real$ (note that the authors of that paper work with a Schr\"odinger operator with rescaled kinetic term, i.e.\ $H f = (1/2) f'' + |x|^2 f$).
		
We can now see more. Indeed, \eqref{eq:nonuniform_HO} shows that the convergence in \eqref{eq:peqe_HO} cannot be uniform in the sets $K_t$ that come from the pGSD property \eqref{eq:pGSD_HO} -- one can choose $x_0$ is such a way that $x_t \in K_t$ for sufficiently large $t$. This means that Theorem~\ref{th:pGSD-to-puqe} is optimal in the sense that, in general, the pGSD property with the exhausting family $\left\{K_{t} : t \geq t_0\right\}$ implies the progressive exponential quasi-ergodicity with $\left\{K_{at} : t \geq t_0/\theta\right\}$ only for $a < 1$, cf.\ Remark~\ref{rem:optimal_theta}.

\subsection{Semigroups of non-local Schr\"odinger operators in the DJP setting} \label{subsec-schroedinger}
One of the main motivations for this paper was to describe the quasi-ergodic properties of the Schr\"odinger semigroups studied in a recent paper \cite{bib:KSchi20}. These are evolution semigroups that belong to non-local Schr\"odinger operators of the form $H=-L+V$, where $L$ is the generator of a symmetric L\'evy process $\{X_t: t \geq 0\}$ (so-called \emph{L\'evy operator}) with the \emph{direct jump property} (DJP in short) in $\R^d$, $d \geq 1$, and $V$ is a sufficiently regular confining potential. Working in that generality, we identified in \cite{bib:KSchi20} a new regularity property of compact semigroups in $L^2$, that we called \emph{progressive intrinsic ultracontractivity} (pIUC in short). It means that the regularity of the semigroup improves as $t \to \infty$. We now want to understand what is the impact of pIUC on the quasi-ergodic properties of the semigroup.

The class of processes that we consider in this section consists of symmetric (self-dual) and strong Feller L\'evy processes (cf.\ Section \ref{sec:Levy}, also for notation). In addition, we have to impose some extra regularity conditions on the density $p_t(x)$ and the L\'evy measure $\nu$ (Assumptions (A1)--(A2) in \cite{bib:KSchi20}; we do not give details here). The key property is the following: we assume that the L\'evy measure $\nu(dx)$ has a density $\nu(x) \, dx$ and that there exists a decreasing \enquote{profile} function $f:(0,\infty) \to (0,\infty)$ such that $\nu(x) \asymp f(|x|)$, $x \neq 0$, and
\begin{align} \label{eq:DJP}
f_1 * f_1(x) \leq c f_1(x), \quad x \in \R^d,
\end{align}
for a constant $c >0$, where $f_1:= f \wedge 1$. This condition has a very suggestive probabilistic interpretation and, therefore, it is called the \emph{direct jump property} (DJP). Note that this class contains many examples of jump L\'evy processes and the corresponding L\'evy operators which play an important role in various applications. For example, it includes the
\begin{itemize}
\item \emph{fractional Laplace operator} $L=-(-\Delta)^{\alpha/2}$, $\alpha \in (0,2)$ (being the generator of the \emph{isotropic $\alpha$-stable process}); here $\nu(x) = c_{d,\alpha} |x|^{-d-\alpha}$;
\item \emph{relativistic operator} $L=-(-\Delta+m^{2/\alpha})^{\alpha/2}+m$, $\alpha \in (0,2)$, $m>0$ (being the generator of the \emph{isotropic relativistic $\alpha$-stable process}); here $\nu(x) \asymp e^{-m^{1/\alpha}|x|} (1 \wedge |x|)^{-\frac{d+\alpha-1}{2}} |x|^{-\frac{d+\alpha+1}{2}}$.
\end{itemize}

We assume that $V \in L^{\infty}_{\loc}(\R^d,dx)$ is a confining potential (i.e.\ $V(x) \to \infty$ as $|x| \to \infty$) such that there is a an increasing profile $g:[0,\infty) \to (0,\infty)$, growing at infinity not faster than an exponential function, and there are constants $c_1, c_2 \geq 1$ such that $c_1^{-1} g(|x|) \leq V(x) \leq c_1 g(|x|)$, for all $|x| \geq c_2$, see the detailed statement in \cite[Assumption (A3)]{bib:KSchi20}.

The Schr\"odinger operator $H=-L+V$ is well-defined, bounded below, and self-adjoint in $L^2(\R^d,dx)$. We denote by $U_t = e^{-tH}$, $t \geq 0$, the operators of the corresponding evolution semigroup. They are known to have the following probabilistic Feynman--Kac representation
\begin{align} \label{eq:djp_F-k}
    U_tf(x) = \ex^x \left[e^{-\int_0^t V(X_s)\,ds} f(X_t)\right], \quad f \in L^2(\R^d,dx), \ t>0.
\end{align}
The operators $U_t$ are self-adjoint, and by \cite[Lemma 2.3]{bib:KSchi20} (or Lemma \ref{lem:general-Feller-ass}) and Lemma~\ref{lem:general-heat-content}.\ref{lem:general-heat-content-a}, all regularity assumptions \eqref{A0}--\eqref{A3} are satisfied and \eqref{A4} follows from the proof of Lemma~\ref{lem:general-heat-content}.\ref{lem:general-heat-content-a}.
Our standard reference for self-adjoint Schr\"{o}dinger operators and the Feynman--Kac semigroups is the monograph \cite{bib:DC}.

We will now show how our present results apply to these semigroups. The discussion will be divided into three parts.

\medskip\noindent
\textbf{(1)} \textit{Heat content and exponential quasi-ergodicity.} First of all, due to Lemma~\ref{lem:general-heat-content}.\ref{lem:general-heat-content-b}, we know that
\begin{align} \label{eq:dsp_sc_hc}
    \liminf_{|x| \to \infty} \frac{V(x)}{\log|x|} >0 \qquad \Longleftrightarrow \qquad \exists \, t_1>0 \::\: Z(t_1)< \infty,
\end{align}
and our results in Section~\ref{sec:content} apply. More precisely, by Theorem~\ref{th:hcf_and_eqe} the semigroup $\left\{U_t: t \geq 0 \right\}$ is exponentially quasi-ergodic for every finite initial distribution, and the limiting measure
\begin{gather*}
    m(dx) =  \frac{\phi_0(x)}{\left\|\phi_0\right\|_1}\, dx,
\end{gather*}
where $\phi_0(x)$ is the ground state of the operator $H$, is the only quasi-stationary measure of $\left\{U_t: t \geq 0 \right\}$. Furthermore, in Corollary~\ref{cor:hcf_uqe} we obtain a pointwise version with the space-rate $U_{t_1}\I_{\real^d}(x)/\phi_0(x)$ which is uniform on compact sets. By \cite[Corollary 2.2]{bib:KL15a} there is a constant $c_3>0$ such that
\begin{gather*}
    \phi_0(x) \geq c_6 \left(1 \wedge \frac{\nu(x)}{V(x)}\right), \quad x \in \R^d
\end{gather*}
(this remains true for potentials without radial profiles). On the other hand, we always have
\begin{gather*}
    U_{t_1}\I_{\real^d}(x) \leq e^{\left\|V_-\right\|_{\infty} t_1}, \quad x \in \R^d,
\end{gather*}
which leads to the following explicit estimate of the rate
\begin{align} \label{eq:hc_rate}
    \frac{U_{t_1}\I_{\real^d}(x)}{\phi_0(x)} \leq c_3 \frac{V(x)}{\nu(x)}, \quad x \in \R^d.
\end{align}
The function on the right hand side tends to infinity as $|x| \to \infty$, but it is bounded on compact sets.
Better estimates for $U_{t_1}\I_{\real^d}(x)$ can be found in \cite[Theorem 4.8]{bib:KSchi20}.
Observe that the sufficient condition in \eqref{eq:dsp_sc_hc} does not depend on $\nu$.

\medskip\noindent
\textbf{(2)} \textit{aGSD regime and uniform exponential quasi-ergodicity.} By \cite[Theorem 2.6]{bib:KL15a},
\begin{align} \label{eq:dsp_sc_agsd}
    \liminf_{|x| \to \infty} \frac{V(x)}{|\log\nu(x)|} >0
    \iff
    \exists \, t_1, c_4>0 \ \forall \, x \in \R^d \::\: U_{t_1}\I_{\real^d}(x) \leq c_4 \phi_0(x).
\end{align}
Recall that the property on the right hand side is called aGSD, see Definition \ref{def:pGSD}.\ref{def:pGSD-a}. If we are in the aGSD regime -- that is, if the growth of the potential is sufficiently large with respect to $|\log\nu(x)|$ --, then the exponential quasi-ergodicity in Corollary~\ref{cor:hcf_uqe} discussed above becomes uniform on the full space $\R^d$. This is exactly what we saw in Corollary \ref{cor:agsd-euqe}.\ref{cor:agsd-euqe-a}. The same conclusion follows from \cite[Theorem 1, Corollary 2]{bib:KP}. However, now we get much more. Combining Corollary \ref{cor:agsd-euqe} and \eqref{eq:dsp_sc_agsd} we get that the uniform quasi-ergodicity of the semigroup $\left\{U_t: t \geq 0 \right\}$ is equivalent to the condition
\begin{gather*}
    \liminf_{|x| \to \infty} \frac{V(x)}{|\log\nu(x)|} >0.
\end{gather*}
If $\nu(x)$ has polynomial decay at infinity (as is, e.g., the case for the fractional Laplacian), then $|\log \nu(x)| \asymp \log|x|$, for $|x|$ large enough, and the properties in \eqref{eq:dsp_sc_hc} and \eqref{eq:dsp_sc_agsd} coincide. On the other hand, if $\nu$ decays at infinity faster than any polynomial, then \eqref{eq:dsp_sc_agsd} is more restrictive than \eqref{eq:dsp_sc_hc}. For example, if $\nu(x)$ is exponential at infinity (e.g.\ for relativistic operators), then $|\log \nu(x)| \asymp |x|$, for large $|x|$.

\medskip\noindent
\textbf{(3)} \textit{pGSD and progressive uniform quasi-ergodicity.} Finally, we characterize the quasi-ergodic behaviour of the Schr\"odinger semigroups $\left\{U_t: t \geq 0 \right\}$ with confining potentials as above which do not satisfy \eqref{eq:dsp_sc_agsd}, no matter how slow the potential may grow at infinity; this brings us out of the aGSD regime. Note that this problem was completely open. Since we want to be here as sharp as possible, we have to monitor precisely the threshold between the aGSD and the non-aGSD regime. For this reason, we follow \cite[Assumption (A4)]{bib:KSchi20} and impose an additional technical condition on the profile $g$ of the potential $V$. More precisely, we require that $g$ depends on the profile $f$ of the L\'evy density $\nu$ in a sufficiently regular way:\ we consider $R_0>0$ such that $f(R_0)<1$ and assume that $g(r) = h(|\log f(r)|)$, $r \geq R_0$, for an increasing function $h:[|\log f(R_0)|,\infty) \to (0,\infty)$ such that $h(s)/s$ is monotone. Under this assumption, the class of Schr\"odinger semigroups $\left\{U_t: t \geq 0 \right\}$ with L\'evy densities $\nu$ and potentials $V$ is divided into two classes:
\begin{enumerate}
\item \textbf{(aGSD regime):} $\liminf_{|x| \to \infty} \frac{V(x)}{|\log\nu(x)|} >0$, i.e.\ \eqref{eq:dsp_sc_agsd} holds;
\item \textbf{(non-aGSD regime):} $\lim_{|x| \to \infty} \frac{V(x)}{|\log\nu(x)|} =0$,
\end{enumerate}
cf.\ \cite[Remark 5.1]{bib:KSchi20}. We already know from Part~(2) that in the aGSD regime the Schr\"odinger semigroup is uniformly exponentially quasi-ergodic and, in fact, we have the equivalence of these two properties.

The rest of this section will be devoted to the analysis of the quasi-ergodic behaviour in the non-aGSD regime. Of course, if \eqref{eq:dsp_sc_hc} holds, then we know from Part~(1) above that we have the exponential quasi-ergodicity with semi-explicit rate, which is uniform on compact sets. With \cite[Corollary 5.6 b)]{bib:KSchi20} we can now give even better estimates for the rate \eqref{eq:hc_rate}
\begin{gather*}
    \frac{U_{t_1}\I_{\real^d}(x)}{\phi_0(x)} \leq c_5 \left(1 \vee \frac{e^{-c_5 V(x)}}{\nu(x)} \right), \quad x \in \R^d,
\end{gather*}
for some $c_4, c_5>0$. Our theorems from Section \ref{sec:pGSD} now yield much stronger results. In the paper  \cite{bib:KSchi20} we identify a new large-time regularity property of compact semigroups, which we call progressive intrinsic ultracontractivity (pIUC). This property holds for semigroups considered here. In \cite[Corollary 5.6 b)]{bib:KSchi20} we show that pIUC implies the following two-sided sharp estimates:\ there exist $\theta \in (0,1)$, $t_2 >0$ and an increasing function $\rho : [t_2, \infty) \to (0,\infty)$, with $\lim_{t \to \infty} \rho(t) = \infty$, such that
\begin{align} \label{eq:pGSD_schr}
    U_{t}\I_{\real^d}(x) \asymp e^{-\lambda_0 t} \phi_0(x) \asymp e^{-\lambda_0 t} \left(1 \wedge \frac{\nu(x)}{V(x)}\right),
    \quad |x| \leq \rho(\theta t), \ t \geq t_2/\theta
\end{align}
(note that the comparison constants are uniform in $x$ and $t$!). The function $\rho$ is the right-continuous generalized inverse function of $r \mapsto |\log f(r)|/g(r)$, see \cite[Lemma 5.2]{bib:KSchi20}. Note that \eqref{eq:pGSD_schr} is the \emph{progressive ground state domination} (pGSD) property, cf.\ Definition \ref{def:pGSD}.\ref{def:pGSD-b}.

Therefore, we see that $\left\{U_t: t \geq 0 \right\}$ is pGSD with the exhausting family $\left\{K_t: t \geq t_2/\theta \right\}$ where $K_t = \overline{B}_{\rho(\theta t)}(0)$, and we can apply Theorem~\ref{th:pGSD-to-puqe} to show that the progressive uniform quasi-ergodicity on any space $L^p$ holds. We obtain that there exist $\gamma, t_0>0$ such that for every $a, b \in (0,1)$ such that $a+2b=1$ and $p \in [1,\infty]$ there is a constant $c_6>0$ such that
\begin{gather*} 
    \sup_{\sigma \in \cM^1\big(\overline{B}_{\rho(a \theta t)}(0)\big)} \sup_{\substack{f \in  L^p(M,\mu)\\ \left\|f\right\|_p \leq 1}} \left|\frac{\sigma(U_t f)}{\sigma(U_t \I_{\real^d})} - m(f)\right|
    \leq c_6 \kappa_b(t), \quad t>(4/(a \wedge b)) t_0,
\end{gather*}
where
\begin{gather*}
\kappa_b(t):=e^{-\gamma b t} + \sup_{|x| \geq \rho(b \theta t)} U_{t_0}\I_{\real^d}(x).
\end{gather*}
Because of Theorem \ref{th:pGSD-gives-uniq}, the measure $m$ is the only quasi-stationary distribution of the Schr\"odinger
semigroup $\left\{U_t: t \geq 0 \right\}$ -- no matter how slowly $V$ grows at infinity. We need to verify only the condition \eqref{eq:add_ass_for_uniq}. To this end, we observe that by \cite[Corollary 5.6 b)]{bib:KSchi20} there are constants $c_7, c_8 >0$ such that
\begin{gather*}
   e^{\lambda_0 t} U_t \I_{\real^d}(x)
    \leq c_7 e^{-(c_8 V(x) - \lambda_0) t},
    \quad |x| \geq \rho(\theta t),
\end{gather*}
for sufficiently large values of $t$. Since $V(x) \to \infty$ as $|x| \to \infty$ and $\rho(t) \to \infty$ as $t \to \infty$, this implies \eqref{eq:equiv_to_add_ass_for_uniq}, which is equivalent to \eqref{eq:add_ass_for_uniq}.

\begin{example}
Let us illustrate these results for two specific classes of L\'evy measures and potentials. See Section~\ref{sec:Levy} for the notation.
\begin{enumerate}
\item \textbf{(polynomial L\'evy densities}, cf.\ \cite[Section 5.4, Example 5.7] {bib:KSchi20}\textbf{)}: let
\begin{gather*}
\nu(x) \asymp |x|^{-d-\alpha} (e \vee |x|)^{-\delta}, \quad \alpha \in (0,2), \ \delta \geq 0,
\end{gather*}
and let $V(x) = (1 \vee \log|x|)^{\beta}$, $\beta >0$. Then we have the following:
\begin{itemize}
\item if $\beta \geq 1$, then the corresponding Schr\"odinger semigroup is in the aGSD regime and exponential uniform quasi-ergodicity holds, see Part~(2) above;
\item if $\beta \in (0,1)$, then the semigroup is in the non-aGSD regime; it is pGSD and progressive uniform quasi-ergodicity holds, see Part~(3). We have
\begin{align}\label{eq:rates_ex}
\rho(t) = \exp\left(t^{1/(1-\beta)} \right) \quad \text{and} \quad \kappa_b(t) \asymp \exp\left(-c_9 \left(t \wedge t^{\frac{\beta}{1-\beta}}\right)\right),
\end{align}
for large values of $t$, with different constants $c_9$ in the lower and in the upper bound. The rate $\kappa_b(t)$ is exponential for $\beta \geq 1/2$, and subexponential (stretched-exponential) for $\beta < 1/2$.
\end{itemize}

\item \textbf{(exponential L\'evy densities}, cf.\ \cite[Section 5.5, Example 5.10]{bib:KSchi20}\textbf{)}: let
\begin{gather*}
\nu(x) \asymp e^{-m|x|} (1 \wedge |x|)^{-d-\alpha} (1 \vee |x|)^{-\delta}, \quad m>0, \ \alpha \in (0,2), \ \delta > (d+1)/2,
\end{gather*}
and let $V(x) = (1 \vee |x|)^{\beta}$, $\beta >0$. Then we have the following:
\begin{itemize}
\item if $\beta \geq 1$, then the corresponding Schr\"odinger semigroup is in the aGSD regime and exponential uniform quasi-ergodicity holds, see Part~(2) above;
\item if $\beta \in (0,1)$, then the semigroup is in the non-aGSD regime; it is pGSD and progressive uniform quasi-ergodicity holds, see Part~(3). We have
\begin{gather*}
\rho(t) = t^{1/(1-\beta)}
\end{gather*}
for large values of $t$, but the rate $\kappa_b(t)$ takes the same form as that in \eqref{eq:rates_ex}.
\end{itemize}
\end{enumerate}
\end{example}
Theorem \ref{th:pGSD} also shows that any semigroup $\left\{U_t: t \geq 0 \right\}$ with finite heat content (for large times) is automatically pGSD; moreover progressive uniform quasi-ergodicity holds with exponential time-rate. For polynomial L\'evy densities this result is not interesting as we already know that in this case \eqref{eq:dsp_sc_hc} is equivalent to \eqref{eq:dsp_sc_agsd}. However, for faster decays the assertions of the theorem are non-trivial. For instance, in the exponential case, one can check that Theorem \ref{th:pGSD} implies pGSD and progressive uniform quasi-ergodicity with the exhausting family $\left\{K_t: t \geq t_0 \right\}$ where $K_t = \overline{B}_{c t}(0)$, for a constant $c>0$.

\end{document}